\newtheorem{thm}{Theorem}[section]
\newtheorem{cor}[thm]{Corollary}
\newtheorem{prop}[thm]{Proposition}
\newtheorem{lemma}[thm]{Lemma}
\theoremstyle{definition}
\newtheorem{remark}[thm]{Remark}
\newcommand{\dist}{\textnormal{dist}}
\numberwithin{equation}{section}
\begin{document}
	\title[Existence results of Singular Toda systems with sign-changing weight functions]{Existence results of Singular Toda systems with sign-changing weight functions}
	\author{Qiang Fei}
	\address{School of Mathematics and Statistics, Central South University,
		Changsha 410083, Hunan, Peoples Republic of China}
	\email{math\_qiangfei@163.com}
	
	\maketitle
	\section*{Abstract}
	We consider the existence problem of the following Singular Toda system on a compact Riemann surface $(\Sigma, g)$ without boundary
	\begin{equation*}
		\begin{cases}
			-\Delta_gu_1=2\overline{\rho}_1\Big({\frac{h_1e^{u_1}}{\int_{\Sigma}h_1e^{u_1}dV_g}}-1\Big)-\rho_2\Big({\frac{h_2e^{u_2}}{\int_{\Sigma}h_2e^{u_2}dV_g}}-1\Big)-4\pi\alpha_1(\delta_0-1),\\
			\\
			-\Delta_gu_2=2\rho_2\big({\frac{h_2e^{u_2}}{\int_{\Sigma}h_2e^{u_2}dV_g}}-1\big)-\overline{\rho}_1\big({\frac{h_1e^{u_1}}{\int_{\Sigma}h_1e^{u_1}dV_g}}-1\big)-4\pi\alpha_2(\delta_0-1),
		\end{cases}
	\end{equation*}
	where $h_1,\,h_2$ are sign-changing smooth functions,\ $\overline{\rho}_1:=4\pi(1+\overline{\alpha}_1),\,0<\rho_2<4\pi(1+\overline{\alpha}_2),\,\overline{\alpha}_i=\min\{0,\alpha_i\},\,\alpha_i>-1,\,i=1,2$. By relying on the proof framework established in\,\cite{DJLW}, the Pohozaev identity and the classical blow-up analysis, we prove the existence theorem under some appropriate condition. Our results generalize Jost-Wang's results\,\cite{JLW} from regular Toda system with positive functions to the singular Toda system involving sign-changing weight functions.
	\par
	
	\section{Introduction}
	The following $\rm{SU}(n+1)$ Toda system
	\begin{equation}\label{G-SU}
		-\Delta u_i=\sum\limits^N_{j=1} a_{ij}e^{u_j},\ \ x\in\Sigma,\ i=1,\cdots,N,
	\end{equation}
	where $\Delta$ represents Laplacian operator and $\rm{A}_n=(a_{ij})_{n\times n}$ the Cartan matrix 
	\begin{equation*}
		A_n=
		\begin{pmatrix}
			2&-1&0&0&\cdots&0\\
			-1&2&-1&0&\cdots&0\\
			0&-1&2&-1&\cdots&0\\
			\vdots&\vdots&\vdots&\vdots&\ \ &\vdots&\\
			0&\cdots&\cdots&-1&2&-1\\
			0&\cdots&\cdots&0&-1&2
		\end{pmatrix},
	\end{equation*}
	appears in many models in different disciplines of science, such as conformal geometry and mathematical physics. In geometry, it palys an important role in the description of holomorphic curves in $\mathbb{CP}^n$ and also has a close connection with the flat $\rm{SU}(n+1)$ connection, complete integrablity and harmonic sequences, see e.g.\cite{BJRW,B-W,Calabi,Chern-W,Doliwa,Guest,L-S} for references.
	In mathematical physics, it arises in models for non-abelian Chern-Simons vortices, which might be applied in high-temperature superconductivity and which also appear in a much wider range of variety than the Yang-Mills framework, see e.g.\cite{Tarantello-A},\,\cite{Tarantello-S} and\,\cite{Yang-S} for details.
	\par

	As $n=1$, system \eqref{G-SU} is reduced to a single Liouville equation, which has been extensively studied for decades. Consider the existence problem of the following Liouville type equation
	\begin{equation}\label{KZ}
		-\Delta u=8{\pi}he^{u}-8\pi,
	\end{equation}
	which is called Kazdan-Warner problem in 1974 and has been engaged in\,\cite{DJLW} successfully. In this article, Ding-Jost-Li-Wang proved the existence result of \eqref{KZ} under some proper geometric condition.  
	\par

	More specifically, in order to obtain the solvability of \eqref{KZ}, Ding-Jost-Li-Wang considered the perturbed equation and corresponding perturbation energy functional $I_{8\pi-\epsilon}$. By exploiting variational method, the minimum of $I_{8\pi-\epsilon}$ could be attained at some $u_{\epsilon}$. Then by resorting to the blow up analysis, they acquired that there at most exists one blow-up point for $u_{\epsilon}$. Subsequently, if the minimizing sequence $u_{\epsilon}$ didn't converge in $H^1(\Sigma)$, they derived the lower boundedness of the limit energy functional $I_{8\pi-\epsilon}(u_{\epsilon})$ as $\epsilon\rightarrow0$. Finally, by constructing a proper bubbling sequence $\phi_{\epsilon}$ and imposing some proper geometric condition, they obtained the limit value of the perturbed energy functional $I_{8\pi-\epsilon}(\phi_{\epsilon})$ is less than this lower boundedness as $\epsilon\rightarrow0$, which makes a contradiction. This fact implies that $u_{\epsilon}$ will converge to the minimum point of $I_{8\pi}$, which coincides with the solution of \eqref{KZ}. Based on the method of Ding et al.\cite{DJLW}, Zhu-Yang \cite{YangZhu,Zhu-a-s, Zhu-an},\ Sun-Zhu \cite{SunZhu-E} generalized the existence result of positive weight function to the case with nonnegative or sign-changing function. Later, similar existence results could also be acquired by flow approach, see \cite{LiXu,LiZhu,SunZhu-G,WY} and the references therein. In light of the above results about scalar Liouville equation, it is natural to generalize the existence problem to the system case.
	\par


	In this article, we focus on the existence problem of following Singular Toda system on a compact Riemann surface without boundary,
	\begin{equation}\label{main-eq}
		\begin{cases}
			-\Delta_gu_1=2\overline{\rho}_1\Big({\frac{h_1e^{u_1}}{\int_{\Sigma}h_1e^{u_1}dV_g}}-1\Big)-\rho_2\Big({\frac{h_2e^{u_2}}{\int_{\Sigma}h_2e^{u_2}dV_g}}-1\Big)-4\pi\alpha_1(\delta_0-1),\\
			\\
			-\Delta_gu_2=2\rho_2\big({\frac{h_2e^{u_2}}{\int_{\Sigma}h_2e^{u_2}dV_g}}-1\big)-\overline{\rho}_1\big({\frac{h_1e^{u_1}}{\int_{\Sigma}h_1e^{u_1}dV_g}}-1\big)-4\pi\alpha_2(\delta_0-1),
		\end{cases}
	\end{equation}
	where $h_1,\,h_2$ are smooth sign-changing functions, $\overline{\rho}_1:=4\pi(1+\overline{\alpha}_1),\,0<\rho_2<4\pi(1+\overline{\alpha}_2),\,\overline{\alpha}_i:=\min\{\alpha_i,\,0\}$ and $\alpha_i>-1,\,i=1,2$. For simplicity, we can define the volume of $\Sigma$ as $|\Sigma|_g=1$.
	\par

	To illustrate the core attributes of our problem, we firstly de-singularize \eqref{main-eq} via a simple change of variables. Consider the Green function $G_p(x)$ satisfying
	\begin{equation*}
		\begin{cases}
			-\Delta_gG_p(x)=\delta_p-{\frac{1}{|\Sigma|_g}}\ \ \text{in}\ \Sigma,\\
			\\
			\int_{\Sigma}G_p(x)dV_g(x)=0.
		\end{cases}
	\end{equation*}
	\par
	
	In a conformal coordinate system centered at $p$, we can postulate that $G_p(x)={\frac{1}{8\pi}}(-4\log r+A(p)+\sigma(x))$, where $r:=\dist_g(x,p)$ and $\sigma(x)=O(r)$ as $r\rightarrow0$. Then by the substitution
	\begin{equation*}
		v_i(x):=u_i+4\pi\alpha_iG_0-\log\int_{\Sigma}h_ie^{u_i}dV_g,\ \ H_i(x):=4\pi\alpha_iG_0,\ i=1,2,
	\end{equation*}
	problem \eqref{main-eq} can be translated into the following system
	\begin{equation}\label{main-eq-1*}
		\begin{cases}
			-\Delta_gv_1=2\overline{\rho}_1\Big(h_1e^{-H_1}e^{v_1}-1\Big)-\rho_2\Big({h_2e^{-H_2}e^{v_2}}-1\Big),\\
			\\
			-\Delta_gv_2=2\rho_2\Big(h_2e^{-H_2}e^{v_2}-1\Big)-\overline{\rho}_1\Big(h_1e^{-H_1}e^{v_1}-1\Big),
		\end{cases}	
	\end{equation}
	where $v_1,\,v_2\in\mathcal{H}$ and $\mathcal{H}$ is defined as the following form,
	\begin{equation*}
		\mathcal{H}=\left\{(v_1,\,v_2)\in H^1(\Sigma)\times H^1(\Sigma):\ \int_{\Sigma}h_1e^{-H_1}e^{v_1}dV_g=1,\,\,\int_{\Sigma}h_2e^{-H_2}e^{v_2}dV_g=1\right\}.
	\end{equation*}
	\par

	The energy functional $J_{\overline{\rho}_1}$ corresponding to\,\eqref{main-eq-1*} is as below:
	\begin{equation*}
		J_{\overline{\rho}_1}(v_1,\,v_2)=\int_{\Sigma}Q(v_1,v_2)dV_g-\overline{\rho}_1\log\int_{\Sigma}h_1e^{-H_1}e^{v_1-\overline{v}_1}dV_g-\rho_2\log\int_{\Sigma}h_2e^{-H_2}e^{v_2-\overline{v}_2}dV_g,
	\end{equation*}
	where $Q(v_1,v_2)={\frac{1}{3}}\big(|\nabla_gv_1|^2+|\nabla_gv_2|^2+\nabla_gv_1\nabla_gv_2\big)$. Refer to the following Moser-Trudinger inequality with singularities \cite{BM},
	\begin{equation}\label{MT}
		4\pi\big(1+\min\{\overline{\alpha}_1,\overline{\alpha}_2\}\big)\sum^2\limits_{i=1}\log\int_{\Sigma}e^{-H_i}e^{v_i-\overline{v}_i}dV_g\leq\int_{\Sigma}Q(v_1,\,v_2)dV_g+C_{\Sigma},\ \ \forall v_1,\,v_2\in H^1(\Sigma).
	\end{equation}
	\par

	We can take into account the following perturbed energy functional $J_{\rho^n_1}$ with $0<\rho_2<4\pi(1+\overline{\alpha}_2)$ and $\rho^n_1$ strictly increasingly converges to $\overline{\rho}_1:=4\pi(1+\overline{\alpha}_1)$, i.e., $\rho^n_1\uparrow\overline{\rho}_1$ strictly. Due to this Moser-Trudinger inequality \eqref{MT}, it is clear that $J_{\rho^n_1}$ is coercive in the set $\mathcal{H}$, which means that $J_{\rho^n_1}$ attains its infimum at some $\big(v^n_1,\,v^n_2\big)\in\mathcal{H}$.
	\par
	
	Thus, in order to attack the existent problem for system \eqref{main-eq-1*}, we can consider the following perturbed system corresponding to the energy functional $J_{\rho^n_1}(v^n_1,\,v^n_2)$,
	\begin{equation}\label{main-eq-1}
		\begin{cases}
			-\Delta_gv^n_1=2\rho^n_1\Big(h_1e^{-H_1}e^{v^n_1}-1\Big)-\rho_2\Big(h_2e^{-H_2}e^{v^n_2}-1\Big),\\
			\\
			-\Delta_gv^n_2=2\rho_2\Big(h_2e^{-H_2}e^{v^n_2}-1\Big)-\rho^n_1\Big(h_1e^{-H_1}e^{v^n_1}-1\Big),
		\end{cases}	
	\end{equation}
	where $\rho^n_1\uparrow\overline{\rho}_1:=4\pi(1+\overline{\alpha}_1)$ strictly,\ $0<\rho_2<4\pi(1+\overline{\alpha}_2),\,h_1,\,h_2$ are the smooth sign-changing functions and $(v^n_1,\,v^n_2)\in\mathcal{H}$ are the solutions of \eqref{main-eq-1}.
	\par

	In \cite{JLW}, under some geometric condition and $h_1,\,h_2$ being positive weight functions, Jost-Lin-Wang obtained the existence result of \eqref{main-eq} without singular sources for $\rho_1=4\pi,\,0<\rho_2<4\pi$ and $\rho_1=\rho_2=4\pi$. Roughly speaking, relying on classical blow-up analysis and the Pohozaev identity\,\cite[Proposition 2.8]{JW-A} deduced by potential estimates, they classified the blow-up value of the SU(3)Toda system, which could be only one of $(4\pi,\,0),\ (0,\,4\pi),\ (4\pi,\,8\pi),\ (8\pi,\,4\pi)$ and $(8\pi,\,8\pi)$. More specifically, they revealed that the scope of blow-up value is within the range of $[4\pi,\,8\pi]$ after discussing the blow-up scenario, which could yield the exact value by utilizing the Pohozaev identity again. Consequently, the possible blow-up value on Riemann surface could be only $(4\pi,0)$ or $(0,4\pi)$. Then by adding some geometric condition deduced from the sharp estimates in\,\cite{Chen-Lin-S-M}, Jost-Lin-Wang obtained the compactness of the minimizing sequence corresponding to the perturbed functional. This fact led to the existence result of \eqref{main-eq} without singular sources for both critical and partial critical parameters. 
	\par

	Following that, relying on the selection process, Pohozaev identity and the discussion on the interaction of bubbling solutions, Lin-Wei-Zhang in\,\cite{LWZ} demonstrated that the limit of energy concentration of singular SU(3)Toda systems constituted a finite set. Furthermore, they also established a uniform estimate for fully bubbling solutions by employing Pohozaev identity. Later, Jevnikar-Wei-Yang\,\cite{JWY} obtained similar result by considering the elliptic sinh-Gordon equation. In\,\cite{WWZ},\,\cite{WZ}, the authors also considered the bubbling phenomenon of\,\eqref{main-eq} without singularities when both parameters tend to the critical values. For more classification result of singular Toda system with critical parameters, see \cite{Chen-Lin-O, Chen-Lin-S, LWY}, etc. 
	\par

	Compared to scalar Liouville equation, the blow-up phenomena for system are more complicated. In contrast to the system with positive weight functions, the disturbed energy caused by sign-changing functions results in a different quantitative analysis of blow-up energy. Furthermore, the presence of singularity adds more complexity to the problem of existence. These factors mentioned above make the existence problem for our case more attractive.	
	\par

	Inspired by the ideas in \cite{DJLW} and \cite{Zhu-a-s}, we can also prove the compactness of the minimizing sequence to the perturbed energy functional by imposing some appropriate conditions. Then as $\rho^n_1$ converges to the critical parameter   $\overline{\rho}_1:=4\pi(1+\overline{\alpha}_1)$, we can derive the limit of minimizing sequence $(v^n_1,\,v^n_2)$, which corresponds to a solution of the original system \eqref{main-eq}.
	\par

	Roughly speaking, firstly by applying blow-up analysis and Pohozaev identity to the perturbed system \eqref{main-eq-1}, we can deduce that $v^n_1$ will blow up at most one point and $v^n_2$ is uniformly bounded from above. In particular, $v^n_1$ can't blow up at the origin as $\alpha_1>0$ and $v^n_1$ will blow up at the origin as $\alpha_1<0$. Subsequently, by splitting the Riemann surface $(\Sigma,g)$ into three domains and calculating the perturbed functional $J_{\rho^n_1}$ on each region, we can establish the lower bound of the infimum value for the energy functional corresponding to \eqref{main-eq-1*} according to the situation $\alpha_1\geq0$ and $\alpha_1<0$. Later, by constructing an appropriate test function, for the case $\alpha_1\geq0$, we can prove the compactness of the minimizing sequence under some appropriate geometric condition. On the other hand, as $\alpha_1<0$, if the infimum value of the energy functional $J_{\overline{\rho}_1}$ is less than this lower bound, we can also obtain the compactness of the minimizing sequence. As a key point of this article, we will elaborate on the blow-up analysis strategy illuminated in Lemma \ref{Sign-wf},\,\ref{PI},\,\ref{CPoint}. However, before that, for any blow-up point $z$, we also need to define the limit energy of $v^n_i,\,i=1,2$ at $z$ as follows:
	\begin{equation}\label{EL}
		\begin{aligned}
			\sigma_1(z):=\lim\limits_{r\rightarrow0}\lim\limits_{n\rightarrow+\infty}\rho^n_1\int_{B_{r}(z)}h_1e^{-H_1}e^{v^n_1}dV_g,\ \ \ \,\sigma_2(z):=\lim\limits_{r\rightarrow0}\lim\limits_{n\rightarrow+\infty}\rho_2\int_{B_{r}(z)}h_2e^{-H_2}e^{v^n_2}dV_g.
		\end{aligned}
	\end{equation}
	\par

	At first, by relying on Brezis-Merle's result \cite[Theorem 1]{Brezis-M}, we obtain the positive value of the sign-changing weight functions at every common blow-up point, which eliminates the possibility of non-positive disturbances. In addition, when considering the blow-up situation of one component of the minimizing sequence $(v^n_1,v^n_2)$, we can also rule out the possibility of the weight function being non-positive near the corresponding blow-up point, according to the strong maximum principle. Subsequently, by exploiting the Pohozaev identity, we obtain that the local mass of $v^n_1$ at the corresponding blow-up point is no less than the total mass. In the meanwhile, by utilizing the boundedness of oscillation on the boundary, we can also deduce the vanishing of the mass of $v^n_1$ in the remaining region. Thus, we obtain that $v^n_1$ blows up at most one point. In particular, as $\alpha_1>0,\ v^n_1$ can not blow up at the origin\,; as $\alpha_1<0,\ v^n_1$ will blow up at the origin. Then, by applying the Pohozaev identity again and the constraint on the parameter $\rho_2$, we can also infer that $v^n_2$ is uniformly bounded from above. The following are our main theorems.
	\par

	\begin{thm}\label{Exist-thm}
		Let $(\Sigma,g)$ be a compact Riemann surface without boundary, $K(x)$ be the Gaussian curvature of $(\Sigma,g)$ and $h_1,\,h_2$ be the sign-changing smooth functions. Define $\Sigma_+:=\{x\in\Sigma:\ h_1(x)>0\}$.
		\par
		
		\textbf{(i)} If $\alpha_1\geq0$, then under the assumption that 
		\begin{equation*}
		\Delta_g\log h_1(x)+8\pi-4\pi\alpha_1-\rho_2-2K(x)>0,\ \forall x\in\Sigma_+,
		\end{equation*}
there exists a solution of \eqref{main-eq}.
\par

\textbf{(ii)} If $\alpha_1<0$, then under the assumption that
\begin{equation*}
	\begin{aligned}
		\inf\limits_{H^1(\Sigma)}J_{\overline{\rho}_1}<&\int_{\Sigma}|\nabla_gw_0|^2dV_g-\overline{\rho}_1\int_{\Sigma}\nabla_gG_{0}\nabla_gw_0dV_g-\rho_2\log\int_{\Sigma}h_2e^{-4\pi\alpha_2G_0-\overline{\rho}_1G_{0}}e^{2w_0}dV_g\\
		&\quad-{\frac{(\overline{\rho}_1)^2}{8\pi}} A(0)+\overline{\rho}_1w_0(0)-\overline{\rho}_1\log\big({\frac{ h_1(0)e^{-\frac{\overline{\alpha}_1}{2}A(0)}}{1+\overline{\alpha}_1}}\big)-\overline{\rho}_1\log\pi-\overline{\rho}_1, 
	\end{aligned}
\end{equation*}
there exists a solution of \eqref{main-eq}.	
\end{thm}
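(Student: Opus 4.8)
The plan is to run the Ding--Jost--Li--Wang scheme of \cite{DJLW} (in the form adapted to sign-changing weights in \cite{Zhu-a-s}): I would prove \emph{compactness of the minimizing sequence} $(v^n_1,v^n_2)\in\mathcal{H}$ of the perturbed functional $J_{\rho^n_1}$ by contradiction, and then pass to the limit $\rho^n_1\uparrow\overline{\rho}_1$. First I would record that the Moser--Trudinger inequality \eqref{MT} together with $\rho^n_1<\overline{\rho}_1$ makes $J_{\rho^n_1}$ coercive on $\mathcal{H}$, so its infimum is attained at a solution $(v^n_1,v^n_2)$ of \eqref{main-eq-1}. The goal is then to show $(v^n_1,v^n_2)$ stays bounded in $H^1(\Sigma)\times H^1(\Sigma)$: granting this, weak limits and elliptic regularity yield a minimizer of the limiting functional $J_{\overline{\rho}_1}$ on $\mathcal{H}$, whose Euler--Lagrange system is \eqref{main-eq-1*}, and reversing the substitution $v_i=u_i+4\pi\alpha_i G_0-\log\int_\Sigma h_i e^{u_i}\,dV_g$ produces a solution of \eqref{main-eq}.

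Assume, for contradiction, that $(v^n_1,v^n_2)$ blows up. Using the blow-up analysis recorded in Lemmas \ref{Sign-wf}, \ref{PI} and \ref{CPoint}, I would first apply the Brezis--Merle alternative \cite{Brezis-M} and the strong maximum principle to show that $h_1$ is \emph{strictly positive} at every blow-up point, which is the key new point forced by the sign-changing weights. Then the Pohozaev identity bounds the local mass $\sigma_1(z)$ of $v^n_1$ from below by the full limiting mass, while the constraint $\int_\Sigma h_1 e^{-H_1}e^{v^n_1}\,dV_g=1$ caps the total at $\rho^n_1\uparrow\overline{\rho}_1$; hence $v^n_1$ concentrates at a single point $p$ with $\sigma_1(p)=\overline{\rho}_1$ and vanishing mass on the complement of any small disc about $p$. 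Since $0<\rho_2<4\pi(1+\overline{\alpha}_2)$ keeps the second component subcritical, a companion Pohozaev argument gives that $v^n_2$ is uniformly bounded from above. Crucially, the singular coefficient $H_1=4\pi\alpha_1 G_0$ behaves like $-2\alpha_1\log r$ near the origin, so blow-up at the origin is \emph{excluded} when $\alpha_1>0$ and \emph{forced} to occur exactly at the origin when $\alpha_1<0$.

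With the concentration profile fixed, the next step is the energy lower bound. I would split $\Sigma$ into three pieces --- a shrinking disc $B_\delta(p)$ carrying the standard Liouville bubble, an intermediate neck, and the exterior --- and evaluate $J_{\rho^n_1}(v^n_1,v^n_2)$ on each, tracking the singular factor $e^{-H_1}$ and the off-diagonal coupling in $Q(v_1,v_2)$, to obtain
\[
\liminf_{n\to\infty}J_{\rho^n_1}(v^n_1,v^n_2)\ \ge\ L
\]
for an explicit constant $L$. When $\alpha_1<0$ the blow-up sits at the origin, and $L$ is precisely the right-hand side displayed in case \textbf{(ii)}; the terms $-\tfrac{(\overline{\rho}_1)^2}{8\pi}A(0)$, $\overline{\rho}_1 w_0(0)$ and the $\log h_1(0)$ contribution all arise from the conformal expansion $G_p=\tfrac{1}{8\pi}(-4\log r+A(p)+\sigma)$; when $\alpha_1\ge0$ the origin is not distinguished and $L$ takes the companion form controlled by the Gaussian curvature $K$.

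Finally I would exhibit a competitor undercutting $L$. For case \textbf{(ii)} this is automatic, since the hypothesis is exactly $\inf_{H^1(\Sigma)}J_{\overline{\rho}_1}<L$, contradicting the lower bound; hence no blow-up, and compactness follows. For case \textbf{(i)} I would instead build a bubbling test function $\phi_\e$ concentrating at a point of $\Sigma_+$ and expand $J_{\overline{\rho}_1}(\phi_\e)$; the pointwise condition $\Delta_g\log h_1+8\pi-4\pi\alpha_1-\rho_2-2K>0$ on $\Sigma_+$ is exactly what makes the $O(1)$ term of this expansion negative, so that $\inf J_{\overline{\rho}_1}\le J_{\overline{\rho}_1}(\phi_\e)<L$ for $\e$ small, again a contradiction. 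I expect the test-function expansion in case \textbf{(i)} --- isolating the sign of the $O(1)$ term while simultaneously handling the singular factor $e^{-H_1}$ and the coupling in $Q$ --- to be the principal obstacle, and it is precisely the sign-changing weight (controlled by the Brezis--Merle positivity) and the singular source that distinguish the argument from the regular positive-weight case of \cite{JLW}.
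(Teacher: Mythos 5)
Your proposal follows essentially the same route as the paper's own proof: the Ding--Jost--Li--Wang perturbation-and-compactness scheme, Brezis--Merle positivity of $h_1$ at blow-up points, the Pohozaev identity forcing single-point concentration of $v^n_1$ with $v^n_2$ uniformly bounded above and the origin dichotomy according to the sign of $\alpha_1$, the three-region (bubble, neck, exterior) energy lower bound $L$, and the contradiction obtained directly from the hypothesis in case (ii) and from a bubbling test function centered in $\Sigma_+$ in case (i). The only imprecision is your remark that the geometric condition makes the ``$O(1)$ term'' of the test-function expansion negative: in fact the $O(1)$ part equals $L$ exactly at the chosen concentration point, and the strict inequality $J_{\overline{\rho}_1}(\Phi^1_{\epsilon},\Phi^2_{\epsilon})<L$ comes from the subleading term of order $\epsilon\log\big(\alpha^2_{\epsilon}+1\big)$, whose coefficient is precisely what the condition $\Delta_g\log h_1+8\pi-4\pi\alpha_1-\rho_2-2K>0$ renders positive (it appears with a minus sign), exactly as in the paper's computation.
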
	
\par

As this paper is about to be completed, a related result by Sun-Zhu\,\cite{SunZhu-x-E} appeared on arXiv. They establish conclusion (i) of Theorem \ref{Exist-thm} for the particular case where $\alpha_1=\alpha_2=0$.
\par

Moreover, following from Conclusion (ii) of Theorem \ref{Exist-thm}, we have the following corollary.			
    \begin{cor}\label{Non-thm}
		Let $(\Sigma,g)$ be a compact Riemann surface without boundary, $K(x)$ be the Gaussian curvature of $(\Sigma,g)$ and $h_1,\,h_2$ be the sign-changing smooth functions. When $\alpha_1<0$, if the minimizing sequence $(v^n_1,v^n_2)$ of the perturbed functional $J_{\rho^n_1}$ does not converge in $H^1(\Sigma)\times H^1(\Sigma)$, then
		\begin{equation*}
			\begin{aligned}
				\inf\limits_{H^1(\Sigma)}J_{\overline{\rho}_1}=&\int_{\Sigma}|\nabla_gw_0|^2dV_g-\overline{\rho}_1\int_{\Sigma}\nabla_gG_{0}\nabla_gw_0dV_g-\rho_2\log\int_{\Sigma}h_2e^{-4\pi\alpha_2G_{0}-\overline{\rho}_1G_{0}}e^{2w_0}dV_g\\
				&\quad-{\frac{(\overline{\rho}_1)^2}{8\pi}} A(0)+\overline{\rho}_1w_0(0)-\overline{\rho}_1\log\big({\frac{ h_1(0)e^{-\frac{\overline{\alpha}_1}{2}A(0)}}{1+\overline{\alpha}_1}}\big)-\overline{\rho}_1\log\pi-\overline{\rho}_1.
			\end{aligned}
		\end{equation*}
	 \end{cor}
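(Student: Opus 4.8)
The plan is to read Corollary \ref{Non-thm} as the threshold (equality) case of Theorem \ref{Exist-thm}(ii) and to establish the two matching inequalities separately. Throughout, write $L$ for the right-hand side of the displayed identity, which is exactly the threshold quantity appearing in Conclusion (ii). The lower bound $\inf_{H^1(\Sigma)}J_{\overline{\rho}_1}\geq L$ is the contrapositive of the key step in the proof of Theorem \ref{Exist-thm}(ii): that argument shows that $\inf_{H^1(\Sigma)}J_{\overline{\rho}_1}<L$ forces the minimizing sequence $(v^n_1,v^n_2)$ to converge in $H^1(\Sigma)\times H^1(\Sigma)$, thereby producing a solution as its limit. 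Since by hypothesis $(v^n_1,v^n_2)$ does not converge, we must have $\inf_{H^1(\Sigma)}J_{\overline{\rho}_1}\geq L$.

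It then remains to prove the reverse inequality $\inf_{H^1(\Sigma)}J_{\overline{\rho}_1}\leq L$, and here I would produce an explicit bubbling competitor. Because $\alpha_1<0$ forces the blow-up of $v^n_1$ at the origin (Lemmas \ref{Sign-wf}, \ref{PI}, \ref{CPoint}), I construct a test pair $\phi_\e=(\phi^1_\e,\phi^2_\e)$: the first component $\phi^1_\e$ is the standard singular Liouville bubble concentrating at $0$ at scale $\e$, with concentration exponent dictated by $1+\overline{\alpha}_1$ through the weight $e^{-H_1}=e^{-4\pi\alpha_1G_0}$, glued to a suitable multiple of the Green function $G_0$ away from $0$ and renormalized so that $\int_\Sigma h_1e^{-H_1}e^{\phi^1_\e}\,dV_g=1$; the second component is frozen at the fixed profile $w_0$ entering $L$ through $-\rho_2\log\int_\Sigma h_2e^{-4\pi\alpha_2G_0-\overline{\rho}_1G_0}e^{2w_0}\,dV_g$, and it does not concentrate since $v^n_2$ is uniformly bounded from above. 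Evaluating $J_{\overline{\rho}_1}(\phi_\e)$ region by region and letting $\e\to0$ should reproduce each term of $L$, giving $\lim_{\e\to0}J_{\overline{\rho}_1}(\phi_\e)=L$ and hence $\inf_{H^1(\Sigma)}J_{\overline{\rho}_1}\leq L$. Combining the two bounds yields the asserted equality.

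A second, test-function-free route is also available and may be cleaner to write up: on the constraint set $\mathcal H$ one has the identity $J_{\overline{\rho}_1}(v^n_1,v^n_2)=J_{\rho^n_1}(v^n_1,v^n_2)+(\overline{\rho}_1-\rho^n_1)\overline{v}^n_1$, and since the concentration of $v^n_1$ gives $\overline{v}^n_1\leq0$ for large $n$, minimality together with the pointwise convergence $J_{\rho^n_1}(w_1,w_2)\to J_{\overline{\rho}_1}(w_1,w_2)$ for each fixed competitor yields $\limsup_{n}J_{\overline{\rho}_1}(v^n_1,v^n_2)\leq\inf_{H^1(\Sigma)}J_{\overline{\rho}_1}$. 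Thus $(v^n_1,v^n_2)$ is itself a minimizing sequence for the critical functional, and the exact version of the three-region energy expansion already developed for Theorem \ref{Exist-thm}(ii) identifies $\lim_{n}J_{\overline{\rho}_1}(v^n_1,v^n_2)=L$, again closing the argument.

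The main obstacle is quantitative rather than structural: one must check that the upper-bound computation reproduces $L$ constant for constant, in particular the singular normalization factor $h_1(0)e^{-\frac{\overline{\alpha}_1}{2}A(0)}/(1+\overline{\alpha}_1)$, the Robin-constant contribution $-\frac{(\overline{\rho}_1)^2}{8\pi}A(0)$, and the bubble constants $-\overline{\rho}_1\log\pi-\overline{\rho}_1$. These encode the interaction of the concentrating bubble with both the conformal expansion $G_0=\frac{1}{8\pi}(-4\log r+A(0)+\sigma)$ and the singular weight at the blow-up point, and matching them against the constants produced by the lower-bound estimate is the delicate step; the remainder parallels the bookkeeping in the proof of Theorem \ref{Exist-thm}(ii).
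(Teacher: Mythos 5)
Your overall skeleton coincides with the paper's: the equality is obtained by combining the blow-up lower bound $\inf_{H^1(\Sigma)}J_{\overline{\rho}_1}\geq L$ (this is exactly Lemma \ref{LB}(ii), which is what your ``contrapositive'' reading of Theorem \ref{Exist-thm}(ii) amounts to) with the upper bound $\inf_{H^1(\Sigma)}J_{\overline{\rho}_1}\leq L$ coming from a bubbling competitor, which in the paper is \eqref{TFP-7}. The lower-bound half of your argument is fine. The genuine gap is in your test function: you freeze the second component at $w_0$, and that cannot work for a Toda system. Because of the cross term in $Q(v_1,v_2)={\frac{1}{3}}\big(|\nabla_g v_1|^2+|\nabla_g v_2|^2+\nabla_g v_1\nabla_g v_2\big)$, a bubble $\Phi_{\epsilon}$ in the first slot with a fixed second slot costs ${\frac{1}{3}}\int_{\Sigma}|\nabla_g\Phi_{\epsilon}|^2dV_g\approx{\frac{4}{3}}\overline{\rho}_1|\log\epsilon|$, while the $h_1$-nonlinearity recovers only $\overline{\rho}_1|\log\epsilon|$ and the $h_2$-term stays $O(1)$; hence $J_{\overline{\rho}_1}(\phi^1_{\epsilon},w_0)\rightarrow+\infty$, not $L$. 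The paper's pair is $\Phi^1_{\epsilon}=\Phi_{\epsilon}-w_0$, $\Phi^2_{\epsilon}=-{\frac{1}{2}}\Phi_{\epsilon}+2w_0$: the anti-bubble $-{\frac{1}{2}}\Phi_{\epsilon}$ in the second slot reduces the quadratic form to ${\frac{1}{4}}\int_{\Sigma}|\nabla_g\Phi_{\epsilon}|^2dV_g\approx\overline{\rho}_1|\log\epsilon|$, which balances exactly, and it is also what produces the term $-\rho_2\log\int_{\Sigma}h_2e^{-4\pi\alpha_2G_0-\overline{\rho}_1G_0}e^{2w_0}dV_g$ in $L$: the factor $e^{-\overline{\rho}_1G_0}e^{2w_0}$ is the signature of the profile $-{\frac{1}{2}}(2\overline{\rho}_1G_0)+2w_0$, not of $w_0$. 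Your justification (``it does not concentrate since $v^n_2$ is uniformly bounded from above'') conflates absence of mass concentration with regularity of the profile: by Lemma \ref{QuantiS} and Proposition \ref{WeakConver}, $v^n_2-\overline{v}^n_2-2w_n\rightarrow-\overline{\rho}_1(x^0_1)G_{x^0_1}$, which is logarithmically singular at the blow-up point, and an admissible competitor must mimic this coupling.

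Your second, ``test-function-free'' route does not close the gap either. The identity $J_{\overline{\rho}_1}(v^n_1,v^n_2)=J_{\rho^n_1}(v^n_1,v^n_2)+(\overline{\rho}_1-\rho^n_1)\overline{v}^n_1$ on $\mathcal{H}$ and the conclusion that $(v^n_1,v^n_2)$ is also a minimizing sequence for the critical functional are correct (this is essentially what underlies \eqref{LB-N-6}), but the three-region expansion of Lemma \ref{LB} is a chain of one-sided estimates: the neck contribution is bounded below via the Dirichlet principle by the harmonic competitor, and the Taylor expansions there are inequalities. So it yields only $\lim_n J_{\rho^n_1}(v^n_1,v^n_2)\geq L$ and cannot ``identify'' the limit; upgrading this to an equality is precisely equivalent to the missing upper bound. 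Both of your routes therefore reduce to the same missing ingredient, the $-{\frac{1}{2}}$-coupled anti-bubble in the second component, which is the essential Toda-specific feature of the paper's Section 4.1 construction.
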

	\par
	
	\begin{remark}
		In comparison to \cite{JLW}, we generalize the geometric condition to the singular case, which involves sign-changing functions.
	\end{remark}		
	
	The paper is organized as follows: in Section 2, by exploiting the classical blow-up analysis and the Pohozaev identity, we confirm that $v^n_1$ will blow up at most one point and $v^n_2$ is uniformly bounded from above. In particular, as $\alpha_1>0,\ v^n_1$ can not blow up at the origin; as $\alpha<0,\ v^n_1$ will blow up at the origin. In Section 3, according to the situation $\alpha_1\geq0$ and $\alpha_1<0$, through straightforward computations based on the blow-up behavior of $v^n_1$ and the interaction between $v^n_1$ and $v^n_2$, we derive the lower bound of the infimum value of the energy functional corresponding to \eqref{main-eq-1*} separately. In Section 4, under some appropriate assumption, we complete the Theorem \ref{Exist-thm} by constructing an appropriate test function.
	\par

	\section{Preliminary Knowledge}

	\begin{lemma}\label{u-bdd}
		Suppose $(v^n_1,\,v^n_2)\in\mathcal{H}$ attains the infimum of $J_{\rho^n_1}$, then there exist two positive constants $c_1$ and $c_2$ such that
		\begin{equation}\label{u-bdd*}
			c_1\leq\int_{\Sigma}e^{-H_i}e^{v^n_i}dV_g\leq c_2,\ i=1,2.
		\end{equation}
	\end{lemma}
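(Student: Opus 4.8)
The plan is to prove the two inequalities by quite different means: the lower bound is elementary and uniform, while the upper bound carries all the difficulty. Throughout write $A^n_i:=\int_\Sigma e^{-H_i}e^{v^n_i}\,dV_g$.

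For the lower bound I would use only the constraint defining $\mathcal H$. Since $(v^n_1,v^n_2)\in\mathcal H$ we have $\int_\Sigma h_ie^{-H_i}e^{v^n_i}\,dV_g=1$, and bounding $h_i\le\|h_i\|_{L^\infty(\Sigma)}$ pointwise gives $1\le\|h_i\|_{L^\infty(\Sigma)}A^n_i$, whence $A^n_i\ge\|h_i\|_{L^\infty(\Sigma)}^{-1}=:c_1>0$, independently of $n$. This uses nothing beyond membership in $\mathcal H$.

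For the upper bound I would first cap the energy: fixing a single competitor $(\phi_1,\phi_2)\in\mathcal H$ and using $\rho^n_1\le\overline\rho_1$, minimality yields $J_{\rho^n_1}(v^n_1,v^n_2)=\inf_{\mathcal H}J_{\rho^n_1}\le J_{\rho^n_1}(\phi_1,\phi_2)\le C$ uniformly in $n$. Next, evaluating $J_{\rho^n_1}$ on $\mathcal H$ and using the constraint to simplify the logarithmic terms, namely $\log\int_\Sigma h_ie^{-H_i}e^{v^n_i-\overline v^n_i}\,dV_g=-\overline v^n_i$, one obtains the identity $J_{\rho^n_1}(v^n_1,v^n_2)=\int_\Sigma Q(v^n_1,v^n_2)\,dV_g+\rho^n_1\overline v^n_1+\rho_2\overline v^n_2$. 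Writing $A^n_i=e^{\overline v^n_i}\int_\Sigma e^{-H_i}e^{v^n_i-\overline v^n_i}\,dV_g$, I would control the second factor by H\"older's inequality together with the Moser-Trudinger inequality \eqref{MT}---here the hypothesis $\alpha_i>-1$ is used to guarantee $e^{-H_i}\in L^p(\Sigma)$ for some $p>1$---and control $\overline v^n_i$ through the energy identity. For the second component the strict subcriticality $\rho_2<4\pi(1+\overline\alpha_2)$ keeps that channel non-degenerate, and I expect the bound on $A^n_2$ to follow along these lines.

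The main obstacle is the first component together with the sign-changing nature of $h_1$. For a positive weight one would have $A^n_i$ comparable to the normalized integral $\int_\Sigma h_ie^{-H_i}e^{v^n_i}\,dV_g=1$ and the upper bound would be automatic; here no such pointwise comparison exists, and the coercivity supplied by \eqref{MT} degenerates in the $v_1$-channel as $\rho^n_1\uparrow\overline\rho_1$, so $\int_\Sigma Q\,dV_g$ need not stay bounded. The point I would have to exploit is that the normalization $\int_\Sigma h_1e^{-H_1}e^{v^n_1}\,dV_g=1>0$ forbids the mass of $e^{-H_1}e^{v^n_1}$ from escaping into the region $\{h_1\le0\}$: any concentration must occur where $h_1>0$, which is exactly what keeps $A^n_1$ comparable to the normalized integral and hence uniformly bounded. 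Turning this heuristic into a quantitative, $n$-uniform estimate is the crux, and I expect it to require the Brezis-Merle type input and the strong maximum principle already advertised in the introduction.
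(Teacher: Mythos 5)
Your lower bound is complete and is exactly the paper's argument: the constraint $\int_\Sigma h_ie^{-H_i}e^{v^n_i}\,dV_g=1$ plus $h_i\le\max_\Sigma h_i$ gives $A^n_i\ge(\max_\Sigma h_i)^{-1}$. Your upper-bound setup also matches the paper's opening moves: the uniform cap $J_{\rho^n_1}(v^n_1,v^n_2)\le C$ from a fixed competitor, and the constraint identity $J_{\rho^n_1}(v^n_1,v^n_2)=\int_\Sigma Q(v^n_1,v^n_2)\,dV_g+\rho^n_1\overline v^n_1+\rho_2\overline v^n_2$. But from there you stop at a heuristic (``mass cannot escape into $\{h_1\le0\}$'') and defer the crux to Brezis--Merle estimates and the strong maximum principle. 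That is a genuine gap, not a near-miss: those tools enter the paper only \emph{later} (Lemmas \ref{Sign-wf} and \ref{CPoint}, which already presuppose this lemma), they are pointwise/local in nature and do not by themselves yield an $n$-uniform bound on $A^n_1$, and --- crucially --- they are not needed here at all.

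The missing idea is soft and purely variational. Apply the singular Moser--Trudinger inequality \eqref{MT} to $(v^n_1-\overline v^n_1,\,v^n_2-\overline v^n_2)$, which reads
\begin{equation*}
\rho_0\sum_{i=1,2}\big(\log A^n_i-\overline v^n_i\big)\le\int_\Sigma Q(v^n_1,v^n_2)\,dV_g+C_\Sigma,\qquad\rho_0:=4\pi\big(1+\min\{\overline\alpha_1,\overline\alpha_2\}\big),
\end{equation*}
substitute your identity in the form $\int_\Sigma Q\,dV_g\le C-\rho^n_1\overline v^n_1-\rho_2\overline v^n_2$, and absorb the remaining averages by Jensen's inequality $\overline v^n_i\le\log A^n_i$. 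This produces the paper's key estimate \eqref{u-bdd-3},
\begin{equation*}
\frac{\rho^n_1}{\rho_0}\log A^n_1+\frac{\rho_2}{\rho_0}\log A^n_2\le\tilde C_\Sigma .
\end{equation*}
The final step --- the trick you did not find --- is that the lower bound you already proved, $\log A^n_i\ge-\log\max_\Sigma h_i$, converts this bound on a weighted \emph{sum} into an upper bound on \emph{each} term separately, since $\rho^n_1$ and $\rho_2$ stay bounded away from zero. No bound on $\int_\Sigma Q\,dV_g$ is needed (you were right that it may blow up), and the sign-changing nature of $h_1$ never enters beyond $\max_\Sigma h_1>0$; the criticality of $\rho^n_1\uparrow\overline\rho_1$ is harmless because the argument only uses the MT deficit, not coercivity. (One caveat that applies equally to the paper's write-up: the Jensen step as stated needs the coefficients $1-\rho^n_1/\rho_0$ and $1-\rho_2/\rho_0$ to be nonnegative, which is automatic when $\overline\alpha_1=\overline\alpha_2$, e.g.\ when both $\alpha_i\ge0$, but requires extra care otherwise; you would have hit the same issue had you reached this point.)
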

	
	\begin{proof}
		Since $(v^n_1,\,v^n_2)\in\mathcal{H}$, we have that
		\begin{equation}\label{u-bdd-1}
			\begin{aligned}
				{\frac{1}{\max\limits_{x\in\Sigma}h_i(x)}}={\frac{\int_{\Sigma}h_ie^{-H_i}e^{v^n_i}dV_g}{\max\limits_{x\in\Sigma}h_i(x)}}\leq\int_{\Sigma}e^{-H_i}e^{v^n_i}dV_g,\ i=1,2,
			\end{aligned}
		\end{equation}
		then the left side of inequality \eqref{u-bdd*} can be attained by taking $c_1:=\min\{{\frac{1}{\max\limits_{x\in\Sigma}h_1(x)}},\,{\frac{1}{\max\limits_{x\in\Sigma}h_2(x)}}\}$.
		
		As $n\rightarrow+\infty$, for any $\rho^n_1$ sufficiently close to $4\pi(1+\overline{\alpha}_1)$, by utilizing Moser-Trudinger inequality\,\eqref{MT}, Jensen's inequality and the boundedness of $J_{\rho^n_1}(v^n_1,v^n_2)$, we can deduce that
		\begin{equation*}
			\begin{aligned}
				&\log\int_{\Sigma}e^{-H_1}e^{v^n_1}dV_g+\log\int_{\Sigma}e^{-H_2}e^{v^n_2}dV_g,\\
				\leq&{\frac{1}{\rho_0}}\Big(J_{\rho^n_1}(v^n_1,v^n_2)+\rho^n_1\log\int_{\Sigma}h_1e^{-H_1}e^{v^n_1-\overline{v}^n_1}dV_g+\rho_2\log\int_{\Sigma}h_2e^{-H_2}e^{v^n_2-\overline{v}^n_2}dV_g\Big)+\overline{v}^n_1+\overline{v}^n_2+C_{\Sigma},\\
				\leq&\big(1-{\frac{\rho^n_1}{\rho_0}}\big)\int_{\Sigma}\big(v^n_1-H_1\big)dV_g+\big(1-{\frac{\rho_2}{\rho_0}}\big)\int_{\Sigma}\big(v^n_2-H_2\big)dV_g+\tilde{C}_{\Sigma},\\
				\leq&\big(1-{\frac{\rho^n_1}{\rho_0}}\big)\log\int_{\Sigma}e^{-H_1}e^{v^n_1}dV_g+\big(1-{\frac{\rho_2}{\rho_0}}\big)\log\int_{\Sigma}e^{-H_2}e^{v^n_2}dV_g+\tilde{C}_{\Sigma},
			\end{aligned}
		\end{equation*}
		where $\rho_0:=4\pi\big(1+\min\{\overline{\alpha}_1,\overline{\alpha}_2\}\big)$. After simplification, it yields that
		\begin{equation}\label{u-bdd-3}
			{\frac{\rho^n_1}{\rho_0}}\log\int_{\Sigma}e^{-H_1}e^{v^n_1}dV_g+{\frac{\rho_2}{\rho_0}}\log\int_{\Sigma}e^{-H_2}e^{v^n_2}dV_g\leq\tilde{C}_{\Sigma}.
		\end{equation}
		\par

		Combining \eqref{u-bdd-1} and\,\eqref{u-bdd-3} together, it indicates that
		\begin{equation*}
			\int_{\Sigma}e^{-H_1}e^{v^n_1}dV_g\leq e^{2\tilde{C}_{\Sigma}+{\frac{2\rho_2}{\rho_0}}\big|\log\max\limits_{x\in\Sigma}h_2(x)\big|},\ \int_{\Sigma}e^{-H_2}e^{v^n_2}dV_g\leq e^{\frac{4\pi(1+\overline{\alpha}_1)\big(\tilde{C}_{\Sigma}+\big|\log\max\limits_{x\in\Sigma}h_1(x)\big|\big)}{\rho_2}}.
		\end{equation*}
		\par

		Then the right side of \eqref{u-bdd*} has also been verified by choosing
		\begin{equation*}
			c_2:=\max\left\{\exp\{2\tilde{ C}_{\Sigma}+{\frac{2\rho_2}{\rho_0}}\big|\log\max\limits_{x\in\Sigma}h_2(x)\big|\},\ \exp\{\frac{4\pi(1+\overline{\alpha}_1)\big(\tilde{C}_{\Sigma}+\big|\log\max\limits_{x\in\Sigma}h_1(x)\big|\big)}{\rho_2}\}\right\}.
		\end{equation*}
	\end{proof}
	\par

	\begin{remark}

		before establishing the framework of the blow-up scenario, it is essential to confirm the fact that $h_i(z)>0$ at the corresponding blow-up point $z$ of $v^n_i,\,i=1,2$. In order to achieve this target, we can refer to \cite[Theorem 1]{Brezis-M} as follows.
	\end{remark}
	\par

	\begin{lemma}\label{Brezis-Merle}
		Let $\Omega\subset\Sigma$ be a smooth domain. Assume $u$ is a solution to
		\begin{equation*}
			\begin{cases}
				-\Delta_gu=f,\ x\in\Omega\\
				u|_{\partial\Omega}=0,
			\end{cases}
		\end{equation*}
		where $f\in L^1(\Omega),$ then for every $0<\delta<4\pi$, there is a constant $C>0$ depending only on $\delta$ and $\Omega$ such that
		\begin{equation*}
			\int_{\Omega}\exp\Big({\frac{\big(4\pi-\delta\big)|u|}{\|f\|_{L^1(\Omega)}}}\Big)dV_g\leq C.
		\end{equation*}
	\end{lemma}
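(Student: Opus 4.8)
The plan is to prove this via the Green's representation formula together with Jensen's inequality and Fubini's theorem, which is the route of Brezis and Merle. Since the asserted inequality is unchanged if one replaces $f$ by $f/\|f\|_{L^1(\Omega)}$ and $u$ by the correspondingly rescaled solution, I would first normalize so that $\|f\|_{L^1(\Omega)}=1$; the general statement then follows by linearity of the equation. Writing $G_\Omega(x,y)$ for the Green's function of the Dirichlet Laplacian on $(\Omega,g)$, the solution admits the representation $u(x)=\int_\Omega G_\Omega(x,y)f(y)\,dV_g(y)$, and hence pointwise $|u(x)|\le\int_\Omega |G_\Omega(x,y)|\,|f(y)|\,dV_g(y)$.

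The decisive ingredient is the two-dimensional behavior of the Green's function: on a smooth domain of a Riemann surface one has the uniform bound $|G_\Omega(x,y)|\le \frac{1}{2\pi}\big|\log\dist_g(x,y)\big|+C_\Omega$, where the logarithmic singularity is that of the fundamental solution $-\frac{1}{2\pi}\log$ in dimension two and $C_\Omega$ absorbs the bounded regular part. Viewing $|f|\,dV_g$ as a probability measure (after normalization), Jensen's inequality applied to the convex function $\exp$ gives
\begin{equation*}
\exp\big((4\pi-\delta)|u(x)|\big)\le\int_\Omega \exp\big((4\pi-\delta)|G_\Omega(x,y)|\big)\,|f(y)|\,dV_g(y).
\end{equation*}
Integrating in $x$ and interchanging the order of integration by Fubini's theorem reduces the whole estimate to controlling the single integral $\int_\Omega \exp\big((4\pi-\delta)|G_\Omega(x,y)|\big)\,dV_g(x)$ uniformly in $y$.

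Here the threshold $4\pi$ enters decisively: inserting the logarithmic bound yields an integrand dominated by $C_\Omega'\,\dist_g(x,y)^{-(4\pi-\delta)/2\pi}=C_\Omega'\,\dist_g(x,y)^{-(2-\delta/2\pi)}$, and since $\delta>0$ the exponent is strictly less than $2$, so this quantity is integrable over the two-dimensional domain $\Omega$ with a bound independent of $y$. Combining this with the normalization $\int_\Omega|f|\,dV_g=1$ produces the claimed constant $C=C(\delta,\Omega)$. The main technical point I expect to require care is the uniform Green's function estimate in the Riemannian rather than flat setting—one must fold the curvature contribution and the boundary behavior into the regular part $C_\Omega$—but for a smooth domain this is standard, and the hypothesis $f\in L^1$ only is exactly what makes the Jensen–Fubini scheme, rather than any $L^p$ elliptic estimate, the natural tool.
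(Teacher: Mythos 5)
Your proof is correct and follows essentially the same route as the paper, which gives no proof of its own but cites the statement as Theorem 1 of Brezis--Merle; your Green's representation/Jensen/Fubini argument, with the threshold $4\pi$ entering through the exponent $(4\pi-\delta)/2\pi<2$ of the resulting singularity, is precisely the proof of that cited theorem. The only point specific to the present setting, namely the uniform bound $|G_\Omega(x,y)|\le \frac{1}{2\pi}\big|\log \dist_g(x,y)\big|+C_\Omega$ for the Dirichlet Green's function of a smooth domain in a compact Riemann surface, is standard, exactly as you indicate.
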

	\par

	Having this lemma at our disposal, we will elaborate on how to determine the sign of the weight function at the blow-up points.
	\par

	\begin{lemma}\label{Sign-wf}
		If $v^n_i$ blow up at the point $z$, we can have that $h_i(z)>0,\,i=1,2$.
	\end{lemma}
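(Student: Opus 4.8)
The plan is to argue by contradiction: suppose $z$ is a blow-up point of $v^n_i$ and yet $h_i(z)\le 0$. The two equations of \eqref{main-eq-1} play symmetric roles — interchanging the indices $1\leftrightarrow 2$ together with the coupling constants $\rho^n_1\leftrightarrow\rho_2$ — so it suffices to treat $i=1$. Two uniform bounds will be used throughout. From Lemma \ref{u-bdd} we have $\int_\Sigma e^{-H_j}e^{v^n_j}\,dV_g\le c_2$ for $j=1,2$, so the right-hand sides of both equations are bounded in $L^1(\Sigma)$ uniformly in $n$; moreover, since $(v^n_j)^+\le e^{v^n_j}\le e^{\max H_j}e^{-H_j}e^{v^n_j}$, one also has $\int_{B}(v^n_j)^+\,dV_g\le C$ on any fixed ball $B$. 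The second tool is the Brezis-Merle estimate, Lemma \ref{Brezis-Merle}.

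Write $-\Delta_g v^n_1=2\rho^n_1 h_1 e^{-H_1}e^{v^n_1}+G^n$ with $G^n:=-\rho_2 h_2 e^{-H_2}e^{v^n_2}-2\rho^n_1+\rho_2$. The crucial observation is that when $h_1(z)\le 0$, continuity gives $(h_1)^+\le\varepsilon(r)\to 0$ on $B_{2r}(z)$, so the positive part of the self-interaction term satisfies $\|2\rho^n_1(h_1)^+e^{-H_1}e^{v^n_1}\|_{L^1(B_{2r}(z))}\le 2\overline{\rho}_1\,\varepsilon(r)\,c_2\to 0$; intuitively the self-interaction can only push $v^n_1$ downward, so any blow-up of $v^n_1$ must be fed by the coupling term $-\rho_2 h_2 e^{-H_2}e^{v^n_2}$, whose positive part is $\rho_2(h_2)^- e^{-H_2}e^{v^n_2}$ and is therefore governed entirely by the local mass of $v^n_2$ near $z$.

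Suppose first that either $h_2(z)>0$, or $z$ is not a blow-up point of $v^n_2$ (so $v^n_2\le C$ near $z$). In the former case $(h_2)^-=0$ near $z$; in the latter $e^{v^n_2}\le C$ near $z$. Either way the positive part of the full right-hand side satisfies $\|(-\Delta_g v^n_1)^+\|_{L^1(B_{2r}(z))}<4\pi$ once $r$ is small. Splitting $v^n_1=s^n+w^n$ on $B_{2r}(z)$, where $w^n=w^n_+-w^n_-$ solves the Dirichlet problems with $-\Delta_g w^n_\pm$ equal to the positive/negative parts of $-\Delta_g v^n_1$ and zero boundary data, and $s^n$ is harmonic, Lemma \ref{Brezis-Merle} applied to $w^n_+$ gives $e^{v^n_1}\in L^p(B_r(z))$ with $p>1$, uniformly in $n$; the harmonic remainder $s^n$ is bounded above via the mean value property and the bound on $\int_B(v^n_1)^+$. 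A standard elliptic bootstrap then yields $v^n_1\in L^\infty(B_{r/2}(z))$, contradicting blow-up. (When $v^n_2\le C$ this is just the maximum principle: $-\Delta_g v^n_1\le C_0$ on $B_{2r_0}(z)$, so $v^n_1$ minus a fixed function is subharmonic and hence bounded above by the sub-mean-value inequality together with $\int_B(v^n_1)^+\le C$.) This already covers the degenerate case $h_1(z)=0$ as well, since there the self-interaction source is small by the argument above.

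The remaining and genuinely delicate possibility is a common blow-up point at which also $h_2(z)\le 0$: then both self-interactions are (essentially) nonpositive while both coupling terms act as positive sources, and $\|(-\Delta_g v^n_1)^+\|_{L^1}$ need not be small, so the bootstrap above is blocked. To decouple the system I would pass to the combinations dictated by the Cartan matrix, namely $-\Delta_g(2v^n_1+v^n_2)=3\rho^n_1(h_1e^{-H_1}e^{v^n_1}-1)$ and $-\Delta_g(v^n_1+2v^n_2)=3\rho_2(h_2e^{-H_2}e^{v^n_2}-1)$. When $h_1,h_2\le 0$ near $z$ both right-hand sides are nonpositive, so $2v^n_1+v^n_2$ and $v^n_1+2v^n_2$ are subharmonic on $B_{2r_0}(z)$; combined with $\int_B(v^n_j)^+\le C$, the sub-mean-value inequality forces $2v^n_1+v^n_2\le C$ and $v^n_1+2v^n_2\le C$, hence $v^n_1+v^n_2\le C$ on $B_{r_0}(z)$. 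I expect this a priori bound to be the heart of the matter: it makes the two bubbles mutually exclusive, so that on the region where $v^n_1$ concentrates $v^n_2$ is driven to $-\infty$ and $e^{v^n_2}$ becomes negligible, which should render the coupling source $\rho_2(h_2)^-e^{-H_2}e^{v^n_2}$ small in $L^1$ on small balls and thereby reduce the situation to the easy case. Turning this heuristic into a quantitative statement — extracting smallness of $\int_{B_r(z)}e^{v^n_2}$ from the bound on $v^n_1+v^n_2$ — is the main obstacle, and it is exactly here that Brezis-Merle must be combined with the local-mass bookkeeping of \eqref{EL}.
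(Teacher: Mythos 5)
Your proposal is incomplete, and the case you yourself flag as ``the main obstacle'' --- a common blow-up point $z\in S_1\cap S_2$ with $h_1(z)\le 0$ and $h_2(z)\le 0$ --- is a genuine gap, not a technicality. Your plan there is to extract smallness of the local mass $\int_{B_r(z)}e^{v^n_2}$ from the bound $v^n_1+v^n_2\le C$, but that is the wrong quantity to chase: the bound only forces the two densities to concentrate on essentially disjoint sets (e.g.\ two bubbles at scales $\delta_n\ll d_n$ collapsing to the same point), so $e^{-H_2}e^{v^n_2}$ can perfectly well carry mass of order one on $B_r(z)$ while $v^n_1+v^n_2\le C$ holds pointwise; no argument of this shape will make $\|\rho_2(h_2)^-e^{-H_2}e^{v^n_2}\|_{L^1(B_r(z))}$ small. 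In addition, even your preliminary step in that case is unjustified in the degenerate sub-case: subharmonicity of the two Cartan combinations requires $h_1\le 0$ and $h_2\le 0$ on a whole neighborhood of $z$, which fails when $h_i(z)=0$ and $h_i$ changes sign arbitrarily close to $z$.

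The missing idea --- and it is exactly how the paper argues --- is that in the decoupled equation $-\Delta_g(v^n_1+2v^n_2)=3\rho_2\big(h_2e^{-H_2}e^{v^n_2}-1\big)$ the smallness of the source is carried by the \emph{coefficient}, not by the local mass: if $h_2(z)=0$ then by continuity $\max_{B_{4r}(z)}|h_2|\le\varepsilon(r)$, hence $\|3\rho_2h_2e^{-H_2}e^{v^n_2}\|_{L^1(B_{4r}(z))}\le 3\rho_2\,\varepsilon(r)\,c_2$ by Lemma \ref{u-bdd}, uniformly in $n$ and regardless of where $e^{v^n_2}$ concentrates. Lemma \ref{Brezis-Merle} applied to the Dirichlet part of $v^n_1+2v^n_2$, together with a Harnack bound on the harmonic remainder and one elliptic bootstrap, then bounds the full combination $v^n_1+2v^n_2$ from above near $z$; when $h_2(z)<0$ the maximum principle alone does this. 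Either bound is what the paper takes as the contradiction with common blow-up, with no local-mass bookkeeping at all. (Your worry that boundedness of a positive combination could coexist with ``mutually exclusive'' bubbles is a legitimate criticism, but it applies verbatim to the paper's own contradiction step; what is specific to your write-up is that you set yourself the strictly stronger, and unobtainable, goal of bounding $v^n_1$ itself.) For the record, your Case A ($h_2(z)>0$ or $z\notin S_2$) is essentially correct and is a genuinely different route from the paper, which treats the single-component blow-up by rescaling and the strong maximum principle applied to the limiting Liouville equation; your version would still need two repairs: the bound $\int_B(v^n_j)^+dV_g\le C$ cannot be obtained via $e^{\max H_j}$ on balls containing the origin when $\alpha_j>0$ (write instead $(v^n_j)^+\le(v^n_j-H_j)^++(H_j)^+$), and the bootstrap must invoke H\"older's inequality to absorb the singular weight $e^{-H_1}\sim r^{2\alpha_1}$ when $z=0$ and $\alpha_1<0$.
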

	\begin{proof}
		\noindent\textbf{Case 1.}
		Suppose $z$ is a common blow up point of $v^n_1$ and $v^n_2$, then there exists a small enough ball $B_{4r}(z)$, such that $\overline{B}_{4r}(z)\cap\big(S_1\cup S_2\big)=\{z\}$. Translate \eqref{main-eq-1} into the following system:
		\begin{equation}\label{Sign-wf1}
			\begin{cases}
				-\Delta_g\big(v^n_1+2v^n_2\big)=3\rho_2h_2e^{-H_2}e^{v^n_2}-3\rho_2,\\
				\\
				-\Delta_g\big(v^n_2+2v^n_1\big)=3\rho^n_1h_1e^{-H_1}e^{v^n_1}-3\rho^n_1,\ x\in B_{4r}(z).
			\end{cases}
		\end{equation}
		\par
		\item{(i)}
		Suppose $h_1(z)<0$ or $h_2(z)<0$, for the sake of generality, let $h_1(z)<0$. Then applying maximum principle to \eqref{Sign-wf1}, we can obtain that
		\begin{equation*}
			v^n_2+2v^n_1\leq\big(v^n_2+2v^n_1\big)|_{\partial B_{4r}(z)}<+\infty,\ \ \text{for any}\ x\in B_{4r}(z),
		\end{equation*}
		which contradicts to the assumption that $z$ is a common blow-up point of $v^n_1$ and $v^n_2$.
		\par
		
		\item{(ii)}
		Suppose $h_1(z)=0$ or $h_2(z)=0$, without loss of generality, let $h_2(z)=0$. If $-1<\alpha_2<0$, then there exist some constant $p_0>1,\,q_0>1$ which both rely on $\alpha_2$, such that $-\alpha_2p_0q_0<1$. For any fixed $\delta\in(0,4\pi)$, due to the continuity of $h_2(x)$, we can choose $r$ sufficiently small, such that for any $x\in B_{4r}(z)$, 
		\begin{equation*}
			|h_2(x)|\leq{\frac{4\pi-2\delta}{3\rho_2c_2}}\ \ \text{as}\ \alpha_2\geq0\ \ \text{or}\,\  |h_2(x)|\leq{\frac{(4\pi-\delta)(q_0-1)}{3\rho_2c_2p_0q_0}}\ \ \text{as}\ -1<\alpha_2<0.
		\end{equation*}
		\par
		
		This implies that
		\begin{equation}\label{Sign-wf-2}
			\begin{aligned}
				\int_{B_{4r}(z)}3\rho_2|h_2|\cdot e^{-H_2}e^{v^n_2}dV_g\leq\max\limits_{x\in B_{4r}(z)}|h_2|\cdot3\rho_2\Big(\int_{\Sigma}e^{-H_2}e^{v^n_2}dV_g\Big)\leq
				\begin{cases}
					4\pi-2\delta\ \ \text{as}\ \alpha_2\geq0,\\
					\\
					{\frac{(4\pi-\delta)(q_0-1)}{p_0q_0}}\,\ \text{as}\,-1<\alpha_2<0.
				\end{cases}
			\end{aligned}
		\end{equation}
		\par

		Let $\tau^n_1$ satisfy the following equation,
		\begin{equation}\label{Sign-wf-3}
			\begin{cases}
				-\Delta_g\tau^n_1=3\rho_2h_2e^{-H_2}e^{v^n_2}\ \text{in}\ B_{4r}(z),\\
				\tau^n_1|_{\partial B_{4r}(z)}=0.
			\end{cases}
		\end{equation}
		\par
		
		By applying \eqref{Sign-wf-2} and lemma \ref{Brezis-Merle} to \eqref{Sign-wf-3}, it indicates there exists a constant $C>0$, such that
		\begin{equation*}
			\|\tau^n_1\|_{L^p(B_{4r}(z))}\leq C,\ \|e^{|\tau^n_1|}\|_{L^p(B_{4r}(z))}\leq C\  \ \text{for}\ p=
			\begin{cases}
				{\frac{4\pi-\delta}{4\pi-2\delta}}>1\ \text{as}\ \alpha_2\geq0,\\
				\\
				{\frac{p_0q_0}{q_0-1}}>1\ \ \text{as}\,-1<\alpha_2<0.
			\end{cases}
		\end{equation*}
		\par
		
		Define $\tau^n_2$ as the solution to the following equation,
		\begin{equation}\label{Sign-wf-4}
			\begin{cases}
				-\Delta_g\tau^n_2=-3\rho_2\ \text{in}\  B_{4r}(z),\\
				\tau^n_2|_{\partial B_{4r}(z)}=0.
			\end{cases}
		\end{equation}
		\par
		According to standard elliptic estimates, it yields that $\|\tau^n_2\|_{L^{\infty}(B_{4r}(z))}\leq C$ for some constant $C>0$. Define $\zeta_n:=v^n_1+2v^n_2-\overline{v}^n_1-2\overline{v}^n_2-\tau^n_1-\tau^n_2$, it follows from \eqref{Sign-wf-3} and \eqref{Sign-wf-4} that $\Delta_g\zeta_n=0$. By Harnack's inequality, we can acquire that 
		\begin{equation}\label{Sign-wf5}
			\begin{aligned}
				\|\zeta_n\|_{L^{\infty}(B_{2r}(z))}&\leq C\Big(\|v^n_1+2v^n_2-\overline{v}^n_1-2\overline{v}^n_2\|_{L^1(B_{4r}(z))}+\|\tau^n_1\|_{L^1(B_{4r}(z))}+\|\tau^n_2\|_{L^1(B_{4r}(z))}\Big)\\
				&\leq C\Big(1+\|\Delta_g(v^n_1+2v^n_2)\|_{L^1(\Sigma)}\Big)\\
				&\leq C.
			\end{aligned}
		\end{equation}
		\par
		
		Resorting to Jensen's inequality, we can have $ \overline{v}^n_i\leq\log\int_{\Sigma}e^{-H_i}e^{v^n_i}dV_g<+\infty$, which together with \eqref{Sign-wf5} implies $\|e^{v^n_i}\|_{L^p(B_{2r}(z))}$ is uniformly bounded, $i=1,2$. 
		Then we can attain that if $\alpha_2\geq0$, $\int_{B_{2r}(z)}e^{-pH_2}e^{pv^n_2}dV_g\leq C\int_{B_{2r}(z)}e^{pv^n_2}dV_g<+\infty$;\ If $-1<\alpha_2<0$, it follows from Holder inequality that
		\begin{equation*}
			\int_{B_{2r}(z)}e^{-p_0H_2}e^{p_0v^n_2}dV_g\leq\Big(\int_{B_{2r}(z)}e^{-p_0q_0H_2}dV_g\Big)^{\frac{1}{q_0}}\cdot\Big(\int_{B_{2r}(z)}e^{{\frac{p_0q_0}{q_0-1}}v^n_2}dV_g\Big)^{\frac{q_0-1}{q_0}}<+\infty.
		\end{equation*}
		\par
		
		Both situation indicate that $e^{-H_2}e^{v^n_2}$ is bounded in $L^s(B_{2r}(z))$ for some $s>1$. Successively, by applying \rm{$L^p$} estimate and Sobolev embedding theorem to \eqref{Sign-wf-3}, it yields that $\|\tau^n_1\|_{L^{\infty}(B_{r}(z))}$ is uniformly bounded. Recalling back the definition of $\zeta_n$, we can achieve that
		\begin{equation*}
			\|v^n_1+2v^n_2-\overline{v}^n_1-2\overline{v}^n_2\|_{L^{\infty}(B_r(z))}\leq C\ \ \text{for some constant}\ C>0.
		\end{equation*}
		\par
		
		Since $\overline{v}^n_i\leq C$ for some constant $C>0,\ i=1,2$, it suggests that $v^n_1+2v^n_2\leq C$ for some constant $C>0$, which contradicts with the assumption that $z$ is a common blow up point of $v^n_1$ and $v^n_2$.
		\par
		
		\noindent\textbf{Case 2.}
		Suppose only one component of $v^n_1$ and $v^n_2$ blows up. Without loss of generality, we can assume that $v^n_1$ blow up at the origin and $v^n_2$ is uniformly bounded from above. Similar to the idea in \cite[Lemma 2.5]{Zhu-a-w}.
		\par
		
		Let $\lambda^n_1:=\max\limits_{x\in\Sigma}v^n_1(x):=v^n_1(x^n_1)\rightarrow+\infty,\ x^n_1\rightarrow0,\ r^n_1:=e^{-\frac{\lambda^n_1}{2(1+\alpha_1)}}\rightarrow0$ and choose sufficient small $\tilde{r}>0$ such that $\overline{B}_{\tilde{r}}(0)\cap(S_1\cup S_2)=\{0\}$. Select an isothermal coordinate system centered at the origin, which satisfies $g=e^{\phi(x)}|dx|^2$ and $\phi(0)=0$. 
		\par
		\noindent\textbf{(i)} If ${\frac{|x^n_1|}{r^n_1}}\rightarrow+\infty$, then let $\mu_n:=|x^n_1|^{\alpha_1}e^{\frac{\lambda^n_1}{2}}$ and after scaling of $\varphi^n_1(x)=v^n_1(x^n_1+{\frac{x}{\mu_n}})-\lambda^n_1$, we can translate the second equation of \eqref{main-eq-1} as follows:
		\begin{equation*}
			-\Delta_{\mathbb{R}^2}\varphi^n_1(x)=2\rho^n_1h_1(x^n_1+{\frac{x}{\mu_n}})\Big|{\frac{x^n_1}{|x^n_1|}}+{\frac{x}{\mu_n|x^n_1|}}\Big|^{2\alpha_1}e^{{\varphi}^n_1-{\frac{\alpha_1}{2}}A(0)+\phi(x^n_1+{\frac{x}{\mu_n}})}+\mathcal{G}^n_1\ \text{in}\ B_{\tilde{r}}(0),
		\end{equation*}
		where $\mathcal{G}^n_1:=\Big(-\rho_2h_2(x^n_1+{\frac{x}{\mu_n}})\big|{\frac{x^n_1}{|x^n_1|}}+{\frac{x}{\mu_n|x^n_1|}}\big|^{2\alpha_2}{\frac{|x^n_1|^{2(\alpha_2-\alpha_1)}}{e^{\lambda^n_1}}}e^{v^n_2(x^n_1+{\frac{x}{\mu_n}})-{\frac{\alpha_2}{2}}A(0)}-\big(2\rho^n_1-\rho_2\big)(\frac{1}{\mu_n})^2\Big)e^{\phi(x^n_1+{\frac{x}{\mu_n}})}$.
		\par

		If $\alpha_2-\alpha_1\geq0$, $\frac{|x^n_1|^{2(\alpha_2-\alpha_1)}}{e^{\lambda^n_1}}\rightarrow0$;\ if $\alpha_2-\alpha_1\leq0$, then ${\frac{|x^n_1|^{2(\alpha_2-\alpha_1)}}{e^{\lambda^n_1}}}=\big(\frac{|x^n_1|}{r^n_1}\big)^{2(\alpha_2-\alpha_1)}\cdot(r^n_1)^{2(1+\alpha_2)}\rightarrow0$, which implies that $\mathcal{G}^n_1(x)\rightarrow0$ in $L^{\infty}_{\rm{loc}}(\mathbb{R}^2)$. Subsequently, by applying the elliptic estimates, we can obtain that $\varphi^n_1\rightarrow\varphi_1$ in $C^1_{\rm{loc}}(\mathbb{R}^2)$, where $\varphi_1$ satisfies the following equation,
		\begin{equation*}
			\begin{cases}
				-\Delta_{\mathbb{R}^2}\varphi_1=2\overline{\rho}_1h_1(0)e^{-\frac{\alpha_1}{2}A(0)}e^{\varphi_1}\ \ \text{in}\ \mathbb{R}^2,\\
				\varphi_1(0)=\lim\limits_{n\rightarrow+\infty}\varphi^n_1(0)=0,\\
				\int_{\mathbb{R}^2}e^{\varphi_1}dx<+\infty.
			\end{cases}
		\end{equation*}
		\par
		
		Suppose $h_1(0)\leq0$, then for any radius $R_0>0$, by strong maximum principle, we can obtain that $0=\lim\limits_{n\rightarrow+\infty}\varphi^n_1(0)=\varphi_1(0)\leq\max\limits_{\partial B_{R_0}(0)}\varphi_1(x)\leq0$. This result implies $\varphi_1$ is a constant in $\mathbb{R}^2$, which makes a contradiction to $\int_{\mathbb{R}^2}e^{\varphi_1}dx<+\infty$.
		\par
		
		\noindent\textbf{(ii)} If $0<{\frac{|x^n_1|}{r^n_1}}\leq C$ for some constant $C>0$, we can assume ${\frac{|x^n_1|}{r^n_1}}\rightarrow x_0$ and define $\tilde{v}^n_1:=v^n_1(x^n_1+r^n_1x)+2(1+\alpha_1)\log r^n_1$. Translate the first equation of \eqref{main-eq-1} as follows:
		\begin{equation*}
			-\Delta_{\mathbb{R}^2}\tilde{v}^n_1=2\rho^n_1h_1(x^n_1+r^n_1x)\Big|x+{\frac{x^n_1}{r^n_1}}\Big|^{2\alpha_1}e^{\tilde{v}^n_1-{\frac{\alpha_1}{2}}A(0)+\phi(x^n_1+r^n_1x)}+\mathcal{G}^n_2\ \ \text{in}\ B_{\tilde{r}}(0),
		\end{equation*}
		where $\mathcal{G}^n_2:=\Big(-\rho_2h_2(x^n_1+r^n_1x)\big|x+{\frac{x^n_1}{r^n_1}}\big|^{2\alpha_2}e^{v^n_2(x^n_1+r^n_1x)-{\frac{\alpha_2}{2}}A(0)}(r^n_1)^{2(1+\alpha_2)}-\big(2\rho^n_1-\rho_2\big)(r^n_1)^2\Big)e^{\phi(x^n_1+r^n_1x)}$.
		\par

		Since $\mathcal{G}^n_2(x)\rightarrow0$ in $L^q_{\rm{loc}}(\mathbb{R}^2)$ for $q\in(1,-{\frac{1}{\alpha_2}})$ as $\alpha_2<0$ and $q=+\infty$ as $\alpha_2\geq0$, then by elliptic estimates, $\tilde{v}^n_1\rightarrow\tilde{v}_1$ in $C^1_{\rm{loc}}(\mathbb{R}^2\backslash\{0\})\cap C^0_{\rm{loc}}(\mathbb{R}^2)\cap W^{2,s}_{\rm{loc}}(\mathbb{R}^2)$ for some $s>1$ as $\min\limits_{i=1,2}\{\alpha_i\}<0$ and $\tilde{v}^n_1\rightarrow\tilde{v}_1$\,in $C^1_{\rm{loc}}(\mathbb{R}^2)$ as $\min\limits_{i=1,2}\{\alpha_i\}\geq0$. Thus let $n\rightarrow+\infty$, we can derive the limit equation about $\tilde{v}_1$:
		\begin{equation*}
			\begin{cases}
				-\Delta_{\mathbb{R}^2}\tilde{v}_1=2\overline{\rho}_1h_1(0)e^{-\frac{\alpha_1}{2}A(0)}|x+x_0|^{2\alpha_1}e^{\tilde{v}_1}\ \text{in}\ \mathbb{R}^2,\\
				\tilde{v}_1(0)=\lim\limits_{n\rightarrow+\infty}\tilde{v}^n_1(0)=0,\\
				\int_{\mathbb{R}^2}|x+x_0|^{2\alpha_1}e^{\tilde{v}_1}dx<+\infty.
			\end{cases}
		\end{equation*}
		\par
		
		Suppose $h_1(0)\leq0$, then for any suitable $R_0$ larger than $|x_0|$, by revisiting strong maximum principle again, we can also lead to a similar contradiction.
	\end{proof}
	\par

	\begin{remark}
		Since $\rho^n_1h_1e^{-H_1}e^{v^n_1}$ is bounded in $L^1(\Sigma)$, we can assume that $\rho^n_1h_1e^{-H_1}e^{v^n_1}\rightharpoonup\mu_1$ in the sense of measure. In particular, due to the lemma \ref{Sign-wf}, it yields that for any blow-up point $x^0_1$ to $v^n_1,\ \mu_1(x^0_1)=\sigma_1(x^0_1)\geq0$. Similarly, $\mu_2(x^0_2)=\sigma_2(x^0_2)\geq0$ for any blow-up point $x^0_2$ of $v^n_2$.
		\par

		Before starting the following discussion, define the blow-up sets of $v^n_i,\ i=1,2$ as follows:
		\begin{equation*}
			\begin{aligned}
				S_1:&=\left\{ x\in\Sigma: \exists\,\{x^n_1\}\subset\Sigma, \ \text{such that}\ x^n_1\rightarrow x,\ v^n_1(x^n_1)\rightarrow+\infty
				\right\},\\
				S_2:&=\left\{ x\in\Sigma: \exists\,\{x^n_2\}\subset\Sigma, \ \text{such that}\ x^n_2\rightarrow x,\ v^n_2(x^n_2)\rightarrow+\infty
				\right\}.
			\end{aligned}
		\end{equation*}
	\end{remark}
	\par

	In the forthcoming lemma, we will present the Pohozaev identity associated with \eqref{main-eq-1}, which is recognized as a potent instrument in the analysis of blow-up phenomena. Before that, we also need to define $\alpha_i(z),\ i=1,2$ in the following way:
	\begin{equation}\label{DC-alph}
		\alpha_i(z)=
		\begin{cases}
		0,\ \ \text{as}\ z\neq0\\
		\alpha_i,\ \ \text{as}\ z=0. 	
		\end{cases}
	\end{equation}
	\par

	\begin{lemma}\label{PI} 
		If $(v^n_1,\,v^n_2)$ are the solution of \eqref{main-eq-1} and satisfy \eqref{EL}, then for any $z\in S_1\cup S_2$, we have that
		\begin{equation}\label{PI-3*}
			\sigma^2_1(z)+\sigma^2_2(z)-\sigma_1(z)\sigma_2(z)=4\pi\sigma_1(z)\big(1+\alpha_1(z)\big)+4\pi\sigma_2(z)\big(1+\alpha_2(z)\big).
		\end{equation}
	\end{lemma}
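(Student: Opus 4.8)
The plan is to fix a blow-up point $z\in S_1\cup S_2$, work in an isothermal chart centered at $z$ in which $g=e^{\phi}|dx|^2$ with $\phi(z)=0$, and deduce \eqref{PI-3*} from a Pohozaev--Rellich identity adapted to the Cartan structure of \eqref{main-eq-1}. In the chart the equations read $-\Delta_{\R^2}v_i^n=e^{\phi}F_i^n$, with $F_i^n$ the right-hand side of \eqref{main-eq-1}, and the singular weight de-singularizes: since $H_i=4\pi\alpha_iG_0$ and $G_0=\frac{1}{8\pi}(-4\log|x|+A(0)+\sigma(x))$, one gets $e^{-H_i}=|x|^{2\alpha_i}e^{-\frac{\alpha_i}{2}(A(0)+\sigma(x))}$, so that the full weight is uniformly $\widehat V_i:=e^{\phi}h_ie^{-H_i}=|x-z|^{2\alpha_i(z)}W_i(x)$, where $W_i$ is smooth near $z$ and, by Lemma~\ref{Sign-wf}, $W_i(z)>0$ (here $\alpha_i(z)$ is as in \eqref{DC-alph}). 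Absorbing the bounded contributions of the two $-1$ terms into a uniformly bounded remainder $\vec{L}$ and writing $g_i:=\rho_i\widehat V_ie^{v_i^n}$ for the mass densities (with $\rho_1=\rho_1^n$, $\rho_2$ fixed), the system becomes $-\Delta_{\R^2}\vec{v}^n=A\vec{g}+\vec{L}$, where $A=\left(\begin{smallmatrix}2&-1\\-1&2\end{smallmatrix}\right)$ is the Cartan matrix; the local masses $m_i(r):=\int_{B_r(z)}g_i\,dx$ equal $\rho_i\int_{B_r(z)}h_ie^{-H_i}e^{v_i^n}\,dV_g$ and converge, in the iterated limit, to $\sigma_i(z)$ of \eqref{EL}.

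Then I would test the $A^{-1}$-weighted bilinear combination
\[
\mathcal P:=\sum_{i,j}(A^{-1})_{ij}\int_{B_r(z)}(x\cdot\nabla v_i^n)(-\Delta_{\R^2}v_j^n)\,dx,
\]
which is exactly the pairing attached to the quadratic form $Q$ of the energy functional (its matrix is $\tfrac12A^{-1}$). On the right-hand side, using $-\Delta_{\R^2}\vec{v}^n=A\vec{g}+\vec{L}$ and $\sum_j(A^{-1})_{ij}A_{jk}=\delta_{ik}$, the combination collapses to $\sum_i\int_{B_r(z)}(x\cdot\nabla v_i^n)g_i\,dx$ plus a remainder controlled by $\vec{L}$. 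Writing $(x\cdot\nabla v_i^n)e^{v_i^n}=x\cdot\nabla e^{v_i^n}$ and integrating by parts with $\nabla\cdot(\widehat V_ix)=(2+2\alpha_i(z))\widehat V_i+|x-z|^{2\alpha_i(z)}(x\cdot\nabla W_i)$, each term yields
\[
\int_{B_r(z)}(x\cdot\nabla v_i^n)g_i\,dx=r\int_{\partial B_r(z)}g_i\,d\sigma-(2+2\alpha_i(z))\,m_i(r)-\rho_i\int_{B_r(z)}|x-z|^{2\alpha_i(z)}(x\cdot\nabla W_i)e^{v_i^n}\,dx,
\]
where the last integral is $O(r)$ because $x\cdot\nabla W_i=O(|x-z|)$. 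On the left-hand side, the two-dimensional Rellich identity $\int_{B_r}(x\cdot\nabla u)(-\Delta u)=\frac r2\int_{\partial B_r}|\nabla u|^2\,d\sigma-r\int_{\partial B_r}(\partial_\nu u)^2\,d\sigma$ (the bulk term drops since $n=2$) and its polarization turn $\mathcal P$ into a boundary quadratic form in the normal and tangential derivatives of $v_i^n$ on $\partial B_r(z)$.

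The decisive point is then to evaluate that boundary form. Using $\int_{B_r(z)}(-\Delta_{\R^2}v_i^n)=(A\vec{m})_i+o(1)$ together with the asymptotically radial behaviour of the solutions on the circles $\partial B_r(z)$, namely $\partial_\nu v_i^n\approx-(A\vec{m})_i/(2\pi r)$ with negligible tangential derivative, the polarized form evaluates to $-\frac{1}{4\pi}\vec{m}^{\mathsf T}A\vec{m}=-\frac{1}{2\pi}(m_1^2-m_1m_2+m_2^2)$, since $(A\vec{m})^{\mathsf T}A^{-1}(A\vec{m})=\vec{m}^{\mathsf T}A\vec{m}$. Equating the two expressions for $\mathcal P$ and letting $n\to\infty$ and then $r\to0$, so that the boundary terms $r\int_{\partial B_r(z)}g_i\,d\sigma$, the $\nabla W_i$ corrections, and the $\vec{L}$ remainder all vanish while $m_i(r)\to\sigma_i(z)$, gives $-\frac{1}{2\pi}(\sigma_1^2-\sigma_1\sigma_2+\sigma_2^2)=-2(1+\alpha_1(z))\sigma_1-2(1+\alpha_2(z))\sigma_2$, which is \eqref{PI-3*} after multiplying by $-2\pi$.

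The algebra above is routine once the boundary behaviour is known, so I expect the genuine obstacle to lie precisely in the two analytic inputs on $\partial B_r(z)$: (i) the absence of mass concentration on the shrinking circles, ensuring $r\int_{\partial B_r(z)}g_i\,d\sigma\to0$; and (ii) the asymptotically radial profile $\partial_\nu v_i^n\approx-(A\vec{m})_i/(2\pi r)$ with controlled oscillation, which is what converts the boundary form into the clean expression $\vec{m}^{\mathsf T}A\vec{m}$ (here the decoupled variables $2v_1^n+v_2^n$ and $v_1^n+2v_2^n$ already used in \eqref{Sign-wf1}, whose fluxes are $3m_1$ and $3m_2$, make the radial fluxes transparent). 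Establishing (i)--(ii) is where the blow-up analysis and the potential estimates behind \cite[Proposition 2.8]{JW-A} enter: one represents $v_i^n$ through the Green/Neumann potential, separates the logarithmic part carried by the concentrating mass from a uniformly bounded harmonic remainder, controls the interaction of the two bubbling components, and justifies interchanging the limits $n\to\infty$ and $r\to0$.
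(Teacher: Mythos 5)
Your proposal is correct and is, in substance, the same argument the paper relies on: the paper offers no proof of Lemma~\ref{PI} at all, deferring wholesale to \cite{JLW} and \cite{LWZ}, and the Cartan-matrix-weighted Pohozaev--Rellich computation you sketch --- whose algebra checks out, reducing \eqref{PI-3*} to $-\frac{1}{2\pi}\big(\sigma_1^2-\sigma_1\sigma_2+\sigma_2^2\big)=-2\big(1+\alpha_1(z)\big)\sigma_1-2\big(1+\alpha_2(z)\big)\sigma_2$ via the boundary form $-\frac{1}{4\pi}\,\vec{m}^{\mathsf T}A\vec{m}$ and the divergence computation giving $-(2+2\alpha_i(z))m_i(r)$ --- is precisely the argument carried out in those references. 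The two analytic inputs you isolate as the genuine work, namely the absence of mass concentration on shrinking circles and the asymptotically radial gradient profile obtained from the Green representation, are exactly the technical content that \cite{JLW} and \cite{LWZ} supply, so your deferral of them coincides with the paper's own deferral (your passing claim that Lemma~\ref{Sign-wf} gives $W_i(z)>0$ for both $i$ is a slight over-claim when only one component blows up at $z$, but it is never used in the argument).
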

	\begin{proof}
		For a comprehensive proof, we refer readers to \cite{JLW} and \cite{LWZ} . For simplicity, we omit it here.
	\end{proof}
	\par

	\begin{lemma}\label{CPoint}
		Let $(v^n_1,v^n_2)$ be the blow-up solutions of \eqref{main-eq-1} and satisfy \eqref{EL},\ then $v^n_2$ is uniformly bounded from above and $v^n_1$ will blow up at only one point. In particular,
		\par
		
		\textbf{(i)} if $\alpha_1>0$, then $v^n_1$ can't blow up at the origin;
		\par
		
		\textbf{(ii)} if $\alpha_1<0$, then $v^n_1$ will blow up at the origin.
	\end{lemma}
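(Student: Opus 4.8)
The plan is to combine the quantization of the local blow-up masses coming from the classical blow-up analysis with the global mass constraint $\int_\Sigma h_i e^{-H_i}e^{v^n_i}\,dV_g = 1$, playing the strict subcriticality $\rho_2<4\pi(1+\overline{\alpha}_2)$ against the critical value $\overline{\rho}_1=4\pi(1+\overline{\alpha}_1)$. The two structural facts I would use repeatedly are: (a) by Lemma \ref{Sign-wf} every blow-up point $z$ of $v^n_i$ satisfies $h_i(z)>0$, so near $z$ the weight is a genuine positive singular Liouville weight and the local mass obeys the minimal-mass bound $\sigma_i(z)\geq 4\pi(1+\alpha_i(z))$ with $\alpha_i(z)$ as in \eqref{DC-alph}; and (b) the total masses are pinned, $\rho^n_1\int_\Sigma h_1 e^{-H_1}e^{v^n_1}\,dV_g=\rho^n_1\to\overline{\rho}_1$ and $\rho_2\int_\Sigma h_2 e^{-H_2}e^{v^n_2}\,dV_g=\rho_2$. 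I would carry out the argument in the order: first show $v^n_2$ is bounded above, and only then analyze the blow-up set of $v^n_1$, which avoids the circular dependence one runs into if the two are treated simultaneously.

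A recurring sublemma I would isolate first is the vanishing of the \emph{regular} mass: away from $S_1\cup S_2$ one has $v^n_i\to-\infty$ locally uniformly, so that $\rho^n_1\int_{\Sigma\setminus\bigcup B_r(z)}h_i e^{-H_i}e^{v^n_i}\,dV_g\to 0$. On the open set $\{h_i>0\}\setminus(S_1\cup S_2)$ the potential is positive and the remaining terms of \eqref{main-eq-1} are bounded in $L^1$ — here Lemma \ref{u-bdd} is essential, since $\int_\Sigma e^{-H_j}e^{v^n_j}\,dV_g\leq c_2$ makes the coupling term in each equation bounded in $L^1$ — so the Brezis–Merle alternative forces $v^n_i\to-\infty$ there; the bounded-oscillation estimate of Lemma \ref{Brezis-Merle}, which is valid for sign-changing right-hand sides of small local $L^1$ norm, then propagates this behaviour across the interface $\{h_i=0\}$ to all of $\Sigma\setminus(S_1\cup S_2)$. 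Together with (b) this yields $\sum_{z\in S_1}\sigma_1(z)=\overline{\rho}_1$ and $\sum_{z\in S_2}\sigma_2(z)=\rho_2$.

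For the upper bound on $v^n_2$, suppose $S_2\neq\emptyset$ and pick $z\in S_2$. By (a) and the remark following Lemma \ref{Sign-wf}, $\sigma_2(z)\geq 4\pi(1+\alpha_2(z))\geq 4\pi(1+\overline{\alpha}_2)$; since every local mass is nonnegative and the regular mass vanishes, $\rho_2=\sum_{z'}\sigma_2(z')\geq\sigma_2(z)\geq 4\pi(1+\overline{\alpha}_2)>\rho_2$, a contradiction. Hence $v^n_2$ is uniformly bounded above, so $\sigma_2(\cdot)\equiv 0$, and the Pohozaev identity \eqref{PI-3*} at any $z\in S_1$ collapses to $\sigma_1(z)^2=4\pi(1+\alpha_1(z))\sigma_1(z)$, i.e. $\sigma_1(z)=4\pi(1+\alpha_1(z))$ exactly. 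Using the regular-mass vanishing once more, $\sum_{z\in S_1}4\pi(1+\alpha_1(z))=\overline{\rho}_1=4\pi(1+\overline{\alpha}_1)$. Writing $k=\#S_1$ and separating whether $0\in S_1$, this reads $4\pi\big(k+\alpha_1\cdot\mathbf{1}_{\{0\in S_1\}}\big)=4\pi(1+\overline{\alpha}_1)$: if $\alpha_1>0$ the origin would contribute $4\pi(1+\alpha_1)>4\pi=\overline{\rho}_1$, forcing $0\notin S_1$ and then $k=1$, giving (i); if $\alpha_1<0$ any regular point contributes $4\pi>\overline{\rho}_1$, which is impossible, so $S_1=\{0\}$, giving (ii); and if $\alpha_1=0$ one simply gets $k=1$. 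In every case $v^n_1$ blows up at exactly one point.

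The main obstacle is the sign-changing weight in the mass bookkeeping. For positive weights the global constraint immediately bounds every local mass by the total, but here the integrand $h_i e^{-H_i}e^{v^n_i}$ can be negative, so a priori the concentrated mass could exceed the total at the expense of a negative regular contribution; ruling this out is precisely the vanishing-of-regular-mass step. The delicate point there is that the standard Brezis–Merle trichotomy presupposes a nonnegative potential, so I must first restrict to $\{h_i>0\}$ (legitimate by Lemma \ref{Sign-wf}) to obtain $v^n_i\to-\infty$, and then transport the $-\infty$ behaviour across $\{h_i=0\}$ using only the sign-free oscillation control of Lemma \ref{Brezis-Merle}. A secondary technical point is justifying the minimal-mass inequality $\sigma_i(z)\geq 4\pi(1+\alpha_i(z))$ at the singular source $z=0$, which rests on the classification of entire solutions of the singular Liouville limit equation obtained, exactly as in the proof of Lemma \ref{Sign-wf}, by the rescaling $\tilde v^n_1(x)=v^n_1(x^n_1+r^n_1x)+2(1+\alpha_1)\log r^n_1$.
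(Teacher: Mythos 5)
Your skeleton --- Lemma \ref{Sign-wf} for the sign of the weights, the Pohozaev identity \eqref{PI-3*}, vanishing of the mass away from the blow-up set, and the arithmetic $4\pi(1+\alpha_1(z))=4\pi(1+\overline{\alpha}_1)$ pinning the location of the unique blow-up point --- matches the paper's, and your final case analysis for (i) and (ii) is exactly the paper's. But two of your supporting steps have genuine gaps. The first is the per-component minimal-mass bound $\sigma_2(z)\geq 4\pi(1+\alpha_2(z))$ for \emph{every} $z\in S_2$, on which your whole ``$v^n_2$ is bounded above'' step rests. You attribute it to Lemma \ref{Sign-wf} and the remark following it, but those only give $h_2(z)>0$ and $\sigma_2(z)\geq 0$. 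The Pohozaev identity \eqref{PI-3*} yields a minimal-mass bound only for the \emph{larger} of the two local masses (drop the cross term and divide by $\sigma_{\max}$): it is perfectly consistent with $z\in S_2$, $\sigma_2(z)=0$ and $\sigma_1(z)=4\pi(1+\alpha_1(z))$, so it cannot give your claim at a common blow-up point where $\sigma_1(z)>\sigma_2(z)$. Nor can the rescaling ``exactly as in Lemma \ref{Sign-wf}'': that argument decouples the system only because the other component is assumed bounded above near the point, whereas at a common blow-up point the coupling term $-\rho^n_1h_1e^{-H_1}e^{v^n_1}$ concentrates like $-\sigma_1(z)\delta_z$ under the rescaling of $v^n_2$, and the limit equation is no longer a scalar Liouville equation. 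The paper avoids exactly this: at each blow-up point it assumes without loss of generality $\sigma_1(z)\geq\sigma_2(z)$, applies Pohozaev only to the dominant mass, and runs the subcriticality contradiction $\sigma_2=4\pi(1+\overline{\alpha}_2)>\rho_2$ only under the hypothesis $\sigma_2(z)\geq\sigma_1(z)$.

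The second gap is your ``regular mass vanishing'' sublemma, which is both overstated and unproved. Stated unconditionally it is false: the component that in the end does not blow up ($v^n_2$) does not tend to $-\infty$ off $S_1\cup S_2$; its mass $\rho_2$ stays spread out (after normalization $v^n_2$ converges to the mean-field solution $w_0$ of Remark \ref{AF}), and your bookkeeping $\sum_{z\in S_2}\sigma_2(z)=\rho_2$ would read $0=\rho_2$ once $S_2=\emptyset$. Restricted to a component that does blow up, the proof still fails as sketched: on $\{h_i>0\}\setminus(S_1\cup S_2)$ there are, by construction, no blow-up points, so the Brezis--Merle trichotomy there is equally consistent with local boundedness --- the $-\infty$ alternative is forced only by working on a punctured neighborhood of an actual blow-up point, and there the trichotomy cannot be invoked off the shelf because the coupling term in \eqref{main-eq-1} is a priori only bounded in $L^1$ (indeed it concentrates). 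This is precisely what the paper's Step 1 and Step 2 are built to handle, and in the opposite logical order from yours: first the Pohozaev bound $\sigma_1(z)\geq4\pi(1+\alpha_1(z))$, then the contradiction argument via the comparison function $\psi^n_1$, Green representation and Fatou's lemma showing $\overline{v}^n_1\to-\infty$ (the non-integrability of $e^{\psi_1}$ needs the mass lower bound, which is why the Pohozaev step must come first), and only then the oscillation estimates that propagate $v^n_1\to-\infty$ over $\Sigma\setminus(S_1\cup S_2)$ and make the outside mass vanish. Your plan, which takes the vanishing as an independent first step proved by Brezis--Merle alone, cannot be completed with the tools you cite.
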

	
	\begin{proof}
		Let $x^0_1$ be the blow-up point of $v^n_1$. Without loss of generality, take $\sigma_1(x^0_1)\geq\sigma_2(x^0_1)\geq0$ as an example. Then by Pohozaev identity \eqref{PI-3*}, it is clear that $\sigma_1(x^0_1)\geq4\pi(1+\alpha_1(x^0_1))$. Choose a sufficiently small $r>0$ such that $B_{r}(x^0_1)\cap\big(S_1\cup S_2\big)=\{x^0_1\}$.
		\par
		
		\noindent\textbf{Step 1.}
		First of all, we can claim that $\overline{v}^n_1\rightarrow-\infty$. Assume by contradiction that there exists a constant $C>0$ independent of $n$, such that $\overline{v}^n_1\geq-C$ for any $n\in\mathbb{N}$. By imposing Green representation formula on $v^n_1$ at the small disk $B_r(x^0_1)\backslash\{x^0_1\}$, it yields that there exists a constant $C>0$ independent of $x$ and $n$, such that 
		\begin{equation}\label{CPoint-0}
			\|v^n_1(x)-\overline{v}^n_1\|_{L^{\infty}(B_r(x^0_1)\backslash\{x^0_1\})}\leq C. 
		\end{equation}
		\par
		
		By exploiting \eqref{CPoint-0}, it results in $v^n_1(x)\geq-C$ for any $x\in B_{r}(x^0_1)\backslash\{x^0_1\}$. Then we can infer that $v^n_1$ is bounded on $\partial B_{r}(x^0_1)$. Let $\psi^n_1$ is a weak solution of the following equation,
		\begin{equation*}
			\begin{cases}
				-\Delta_g\psi^n_1=2\rho^n_1h_1(x)e^{-H_1}e^{v^n_1}-\rho_2h_2(x)e^{-H_2}e^{v^n_2}-(2\rho^n_1-\rho_2)\ \ \text{in}\ B_r(x^0_1),\\
				\\
				\psi^n_1|_{\partial B_{r}(x^0_1)}=-\|v^n_1\|_{L^{\infty}(\partial B_{r}(x^0_1))}>-\infty.
			\end{cases}
		\end{equation*}
		\par
		
		By maximum principle, it yields that $\psi^n_1(x)\leq v^n_1(x)$ which implies 
		\begin{equation}\label{CPoint-1}
			\int_{B_{r}(x^0_1)}e^{-H_1}e^{\psi^n_1}dV_g\leq\int_{B_{r}(x^0_1)}e^{-H_1}e^{v^n_1}dV_g\leq c_2.
		\end{equation}
		\par
		
		On the other hand, according to the proof in \cite[Lemma 5.6]{JW-A}, we can obtain that $\psi^n_1$ is uniformly bounded in $W^{1,p}(B_{r}(x^0_1))$ for any $1<p<2$. Here for the integrity of the whole narrative, we will prove it in detail. For any $1<p<2$, let $p':={\frac{p}{p-1}}$ and $B_r:=B_r(x^0_1)$. By the definition,
		\begin{equation*}
			\|\nabla_g\psi^n_1\|_{L^p}\leq\sup\left\{\Big|\int_{B_r}\nabla_g\psi^n_1\nabla_g\phi_0\,dV_g\Big|:\ \phi_0\in W^{1,p'}\,\text{satisfying}\,\int_{B_r}\phi_0\,dV_g=0,\, \|\phi_0\|_{W^{1,p'}(B_r)}=1\right\}.
		\end{equation*}
		\par
		
		Since $p'={\frac{p}{p-1}}>2$, then relying on Sobolev embedding theorem, it yields that $ \|\phi_0\|_{L^{\infty}(B_r(x^0_1))}\leq C$\ for some constant $C>0$. Hence,
		\begin{equation*}
			\begin{aligned}
				\Big|\int_{B_r(x^0_1)}\nabla_g\psi^n_1\cdot\nabla_g\phi_0 dV_g\Big|=&\Big|\int_{B_r(x^0_1)}\phi_0\cdot\Delta_g\psi^n_1dV_g\Big|\\
				\leq& \|\phi_0\|_{L^{\infty}(B_r)}\int_{B_r(x^0_1)}\Big|2\rho^n_1h_1e^{-H_1}e^{v^n_1}-\rho_2h_2e^{-H_2}e^{v^n_2}-(2\rho^n_1-\rho_2)\Big|dV_g\\
				\leq&C.
			\end{aligned}
		\end{equation*}
		\par
		
		This fact implies that $\|\nabla_g\psi^n_1\|_{L^p(B_r(x^0_1))}\leq C$ for some constant $C>0$, which also results in the boundedness of $\|\psi^n_1\|_{W^{1,p}(B_r(x^0_1))}$. Then there exists a function $\psi_1\in W^{1,p}(B_{r}(x^0_1))$ such that $\psi^n_1\rightharpoonup \psi_1$ in $W^{1,p}(B_{r}(x^0_1))$ and $\psi^n_1\rightarrow\psi_1$ in $L^q(B_{r}(x^0_1))$ for any $q\in(1,\,{\frac{2p}{2-p}}]$, where $\psi_1$ solves the following equation,
		\begin{equation*}
			\begin{cases}
				-\Delta_g\psi_1=\Big(2\mu_1(x^0_1)-\mu_2(x^0_1)\Big)\delta_{x^0_1}+O(1)\geq\mu_1(x^0_1)\delta_{x^0_1}+O(1),\\
				\psi_1|_{\partial B_{r}(x^0_1)}\ \ \text{is bounded}.
			\end{cases}
		\end{equation*}
		\par
		
		By exploiting Green representation formula and $\mu_1(x^0_1)\geq4\pi(1+\alpha_1(x^0_1))$, it yields that
		\begin{equation*}
			\psi_1\geq{\frac{\mu_1(x^0_1)}{2\pi}}\log{\frac{1}{\rm{dist}_g(x,\,x^0_1)}}+O(1)\geq2\big(1+\alpha_1(x^0_1)\big)\log{\frac{1}{\rm{dist}_g(x,\,x^0_1)}}+O(1)\ \ \text{in}\ B_r(x^0_1).
		\end{equation*}
		\par
		
		Following from Fatou lemma, it indicates that
		\begin{equation*}
			\begin{aligned}
				\lim\inf\limits_{n\rightarrow+\infty}\int_{B_{r}(x^0_1)}e^{-H_1}e^{\psi^n_1}dV_g\geq\int_{B_{r}(x^0_1)}e^{-H_1}e^{\psi_1}dV_g\geq C\int_{B_{r}(x^0_1)}{\frac{1}{\rm{dist}_g(x,\,x^0_1)^2}}dV_g\rightarrow+\infty,
			\end{aligned}
		\end{equation*}
		\par
		
		which contradicts to \eqref{CPoint-1}, then our claim has been confirmed. In the meanwhile, by reintroducing \eqref{CPoint-0}, we can also deduce that $v^n_1(x)\rightarrow-\infty$ in $B_r(x^0_1)\backslash\{x^0_1\}$.
		\par
		
		\noindent\textbf{Step 2.} By resorting to the Green representation formula again, we can attain the boundary oscillation condition for $v^n_1$ on any compact subset of $\Sigma\backslash(S_1\cup S_2)$. That is, for any fixed $\tilde{x}_0\in\Sigma\backslash(S_1\cup S_2)$, define $0<r_0<{\frac{1}{2}}\rm{\dist_g}(\tilde{x}_0,\ S_1\cup S_2)$ and there exists a constant $C>0$ such that
		\begin{equation*}
			\big|v^n_1(x)-v^n_1(y)\big|\leq C,\  \forall\  x,\,y\in\partial B_{r_0}(\tilde{x}_0).
		\end{equation*}
		\par
		
		Owing to the arbitrary of $\tilde{x}_0$ in $\Sigma\backslash(S_1\cup S_2)$ and $v^n_1(x)\rightarrow-\infty$ for any $x\in B_r(x^0_1)\backslash\{x^0_1\}$ in Step 1, it yields that $v^n_1\rightarrow-\infty$ in any compact subset of $\Sigma\backslash(S_1\cup S_2)$, which implies that
		\begin{equation}\label{CPoint-10}
			4\pi(1+\overline{\alpha}_1)\geq\lim\limits_{n\rightarrow+\infty}\int_{B_r(x^0_1)}h_1e^{-H_1}e^{v^n_1}dV_g=\sigma_1(x^0_1).
		\end{equation}
		\par
		
		Combining \eqref{CPoint-10} and the assumption that $\sigma_1(x^0_1)\geq4\pi\big(1+\alpha_1(x^0_1)\big)$, we can deduce that
		\begin{equation*}
			\sigma_1(x^0_1)=4\pi(1+\alpha_1(x^0_1))=4\pi(1+\overline{\alpha}_1),
		\end{equation*}
which indicates that $v^n_1$ will blow up at the only one point.
\par

 After substituting $\sigma_1(x^0_1)=4\pi(1+\alpha_1(x^0_1))$ into the Pohozaev identity\,\eqref{PI-3*}, it yields $\sigma_2(x^0_1)=0$. In addition, we can also conclude that
		\par
		
		\textbf{(i)} If $\alpha_1>0$, then $x^0_1\neq0$, which indicates that $v^n_1$ can't blow up at the origin.
		\par 
		 
		\textbf{(ii)} If $\alpha_1<0$, then $x^0_1=0$, which implies that $v^n_1$ will blow up at the origin.
		\par

		On the other hand, let $x^0_2$ be the blow-up point of $v^n_2$. On the hypothesis that $\sigma_2(x^0_2)\geq\sigma_1(x^0_2)\geq0$, similarly we can obtain that
		\begin{equation*}
		\sigma_2(x^0_2)=4\pi(1+\alpha_2(x^0_2))=4\pi(1+\overline{\alpha}_2),	
		\end{equation*}
which leads to a contradiction due to $0<\rho_2<4\pi(1+\overline{\alpha}_2)$. This fact indicates that the situation of $v^n_2$ blowing up can't occur.
\par

Thus combining the two blow-up scenario together, the proof has been completed. 
\end{proof}
\par

\begin{remark}\label{QuantiS-R}
	Let $w_n$ be the solution of following mean field equation,
	\begin{equation*}
		\begin{cases}
			-\Delta_gw_n=\rho_2\big(h_2e^{-H_2}e^{v^n_2}-1\big)\\
			\int_{\Sigma}w_ndV_g=0.
		\end{cases}
	\end{equation*}
	\par
	
	Define $\mathcal{F}(x):=\rho_2\big(h_2e^{-H_2}e^{v^n_2}-1\big)$, since $v^n_2$ is uniformly bounded from above, we can obtain that $\mathcal{F}(x)\in L^q(\Sigma)$ for $q\in(1,\,-{\frac{1}{\alpha_2}})$ as $-1<\alpha_2<0$ and $q=+\infty$ as $\alpha_2\geq0$. Then by standard elliptic theory, it indicates that $w_n$ is uniformly bounded in $W^{2,q}_{\rm{loc}}(\Sigma)$.
	\par

	Furthermore, there exists some function $w_0\in W^{2,q}_{\rm{loc}}(\Sigma)$ such that $w_n\rightarrow w_0$ in $C^1_{\rm{loc}}(\Sigma\backslash\{0\})\cap C^0_{\rm{loc}}(\Sigma)\cap W^{2,q}_{\rm{loc}}(\Sigma)$ for $q\in(1,-{\frac{1}{\alpha_2}})$ as $-1<\alpha_2<0$ and $w_n\rightarrow w_0$ in $C^1(\Sigma)$ as $\alpha_2\geq0$.
	\par

    For the convenience of the next lemma, we will define $\overline{\rho}_1(x^0_1)$ in the following form:
    \begin{equation*}
	\overline{\rho}_1(x^0_1):=
	\begin{cases}
		\overline{\rho}_1=4\pi(1+\alpha_1),\ \ \text{as}\ x^0_1=0,\\
		\\
		\overline{\rho}_1=4\pi,\ \ \text{as}\ x^0_1\neq0.
	\end{cases}
    \end{equation*}
\end{remark}

\begin{prop}\label{WeakConver}
	For any $1<p<2$, with $w_n$ defined in Remark \ref{QuantiS-R}, we have that $v^n_1-\overline{v}^n_1+w_n\rightharpoonup2\overline{\rho}_1(x^0_1)G_{x^0_1}$ and $v^n_2-\overline{v}^n_2-2w_n\rightharpoonup-\overline{\rho}_1(x^0_1)G_{x^0_1}$ weakly in $W^{1,p}(\Sigma)$.
	\par
	
	In addition, $v^n_1-\overline{v}^n_1+w_n\rightarrow2\overline{\rho}_1(x^0_1)G_{x^0_1}$ and $v^n_2-\overline{v}^n_2-2w_n\rightarrow-\overline{\rho}_1(x^0_1)G_{x^0_1}$ strongly in $C^1_{\rm{loc}}(\Sigma\backslash\{0,x^0_1\})\cap C^0_{\rm{loc}}(\Sigma\backslash\{x^0_1\})\cap W^{2,s}_{\rm{loc}}(\Sigma\backslash\{x^0_1\})$ for any $s\in(1,-{\frac{1}{\alpha_1}})$ as $-1<\alpha_1<0$ and in $C^1_{\rm{loc}}(\Sigma\backslash\{x^0_1\})$ as $\alpha_1\geq0$.
	\par	
\end{prop}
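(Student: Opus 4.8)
The plan is to take the two linear combinations of \eqref{main-eq-1} that cancel the $v^n_2$--nonlinearity. Adding the $w_n$--equation of Remark \ref{QuantiS-R} to the first equation of \eqref{main-eq-1}, and subtracting twice that equation from the second, a direct computation gives
\[
-\Delta_g(v^n_1+w_n)=2\rho^n_1\big(h_1e^{-H_1}e^{v^n_1}-1\big),\qquad
-\Delta_g(v^n_2-2w_n)=-\rho^n_1\big(h_1e^{-H_1}e^{v^n_1}-1\big),
\]
so that both combinations are driven solely by the single term $\rho^n_1h_1e^{-H_1}e^{v^n_1}$. Writing $f_n:=v^n_1-\overline{v}^n_1+w_n$ and $g_n:=v^n_2-\overline{v}^n_2-2w_n$ and recalling $\int_\Sigma w_n\,dV_g=0$, both $f_n$ and $g_n$ have zero mean and satisfy the same equations with $\overline{v}^n_i$ merely shifting the right-hand side by a constant that integrates to zero.

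The crucial step is to show that, in the sense of measures on $\Sigma$,
\[
\rho^n_1h_1e^{-H_1}e^{v^n_1}\rightharpoonup\overline{\rho}_1(x^0_1)\,\delta_{x^0_1}.
\]
By Lemma \ref{CPoint}, $v^n_1$ blows up at the single point $x^0_1$ with $\sigma_1(x^0_1)=4\pi\big(1+\alpha_1(x^0_1)\big)=\overline{\rho}_1(x^0_1)$, while $v^n_1\to-\infty$ uniformly on compact subsets of $\Sigma\setminus\{x^0_1\}$; moreover $h_1(x^0_1)>0$ by Lemma \ref{Sign-wf}. The constraint $\int_\Sigma h_1e^{-H_1}e^{v^n_1}\,dV_g=1$ forces the total mass of any weak limit to be $\lim_n\rho^n_1=\overline{\rho}_1=\overline{\rho}_1(x^0_1)$, so it only remains to exclude loss of mass away from $x^0_1$. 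On $\Sigma\setminus B_r(x^0_1)$ the weight $e^{-H_1}\sim \dist_g(x,0)^{2\alpha_1}$ is bounded (its singularity sits at the origin, which either equals $x^0_1$ when $\alpha_1<0$ or is harmless when $\alpha_1\geq0$), so $v^n_1\to-\infty$ together with the $L^1$--bound of Lemma \ref{u-bdd} yields $\int_{\Sigma\setminus B_r(x^0_1)}|h_1|e^{-H_1}e^{v^n_1}\,dV_g\to0$. Testing against $\varphi\in C^0(\Sigma)$ and letting $n\to\infty$ then $r\to0$, the definition \eqref{EL} of $\sigma_1(x^0_1)$ produces exactly the claimed Dirac concentration. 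Consequently $-\Delta_gf_n\rightharpoonup2\overline{\rho}_1(x^0_1)(\delta_{x^0_1}-1)$ and $-\Delta_gg_n\rightharpoonup-\overline{\rho}_1(x^0_1)(\delta_{x^0_1}-1)$ in the distributional sense.

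To reach weak $W^{1,p}$ convergence I would first bound $\|f_n\|_{W^{1,p}(\Sigma)}+\|g_n\|_{W^{1,p}(\Sigma)}\leq C$ for every $1<p<2$ by the duality argument used in Step 1 of the proof of Lemma \ref{CPoint}: the right-hand sides are bounded in $L^1(\Sigma)$ by Lemma \ref{u-bdd}, and pairing $\nabla_gf_n$ against $\phi_0\in W^{1,p'}$ with $p'>2$, using $W^{1,p'}\hookrightarrow L^\infty$ and $\int_{\Sigma}\nabla_g f_n\cdot\nabla_g\phi_0\,dV_g=\int_\Sigma\phi_0(-\Delta_g f_n)\,dV_g$, controls $\|\nabla_gf_n\|_{L^p}$; the zero-mean normalization and Poincaré then bound $\|f_n\|_{W^{1,p}}$. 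Extracting a weakly convergent subsequence $f_n\rightharpoonup f$, the limit satisfies $-\Delta_gf=2\overline{\rho}_1(x^0_1)(\delta_{x^0_1}-1)$ with $\int_\Sigma f\,dV_g=0$; since $G_{x^0_1}$ is characterized by $-\Delta_gG_{x^0_1}=\delta_{x^0_1}-1$ and $\int_\Sigma G_{x^0_1}\,dV_g=0$, uniqueness forces $f=2\overline{\rho}_1(x^0_1)G_{x^0_1}$, and independence of the subsequence gives convergence of the whole sequence; identically $g_n\rightharpoonup-\overline{\rho}_1(x^0_1)G_{x^0_1}$. For the strong convergence I would localize on a compact $K\subset\Sigma\setminus\{x^0_1\}$, where the Dirac mass disappears and $-\Delta_g\big(f_n-2\overline{\rho}_1(x^0_1)G_{x^0_1}\big)=2\rho^n_1h_1e^{-H_1}e^{v^n_1}-2\rho^n_1+2\overline{\rho}_1(x^0_1)\to0$, because $e^{v^n_1}\to0$ uniformly on $K$ and $\rho^n_1\to\overline{\rho}_1(x^0_1)$. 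The $L^s$--rate of this vanishing is governed by $\int_K \dist_g(x,0)^{2\alpha_1 s}\,dV_g$, which is finite for all $s$ when $\alpha_1\geq0$ (giving $C^1_{\mathrm{loc}}(\Sigma\setminus\{x^0_1\})$ convergence up to the origin through $W^{2,s}\hookrightarrow C^{1,\beta}$) and only for $s<-1/\alpha_1$ when $\alpha_1<0$ (where $x^0_1=0$, yielding the stated $C^1_{\mathrm{loc}}(\Sigma\setminus\{0,x^0_1\})\cap C^0_{\mathrm{loc}}(\Sigma\setminus\{x^0_1\})\cap W^{2,s}_{\mathrm{loc}}$ convergence); interior elliptic estimates then conclude, and the same applies to $g_n$.

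The main obstacle is the measure-concentration step: one must rule out any loss of mass along the \emph{sign-changing} weight $h_1$ and pin the concentration coefficient to exactly $\overline{\rho}_1(x^0_1)$. This rests essentially on the sharp quantization $\sigma_1(x^0_1)=\overline{\rho}_1(x^0_1)$ from Lemma \ref{CPoint} matching the total mass $\int_\Sigma h_1e^{-H_1}e^{v^n_1}\,dV_g=1$, so that no mass can hide in the negativity set of $h_1$ or escape to the origin; once this is secured, the $W^{1,p}$ estimate, the Green-function identification, and the elliptic upgrade are routine.
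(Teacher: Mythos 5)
Your proposal is correct and follows essentially the same route as the paper: reduce to the decoupled equations for $v^n_1+w_n$ and $v^n_2-2w_n$, obtain the Dirac concentration $\rho^n_1h_1e^{-H_1}e^{v^n_1}\rightharpoonup\overline{\rho}_1(x^0_1)\delta_{x^0_1}$ from Lemma \ref{CPoint}, run the duality argument of \cite[Lemma 5.6]{JW-A} against $W^{1,p'}$ test functions, and conclude with interior elliptic estimates away from $x^0_1$. The only cosmetic difference is that the paper pairs the difference $v^n_1-\overline{v}^n_1+w_n-2\overline{\rho}_1(x^0_1)G_{x^0_1}$ directly against test functions to get $L^p$ convergence of the gradients, whereas you first bound $\|f_n\|_{W^{1,p}}$ and then identify the weak limit via uniqueness of the Green function; this is the same estimate organized differently.
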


\begin{proof}
	It follows from Lemma \ref{CPoint} that $h_1e^{-H_1}e^{v^n_1}\rightarrow\delta_{x^0_1}$ in the sense of measure. Subsequently, anagolous to the argument in \cite[Lemma 5.6]{JW-A}, for any $1<p<2$, let $p'={\frac{p}{p-1}}$, then for any $\phi_0\in W^{1,p'}(\Sigma)$ satisfying $\int_{\Sigma}\phi_0\,dV_g=0\ \text{and}\,\|\phi_0\|_{W^{1,p'}(\Sigma)}=1$, we can deduce that 
	\begin{equation*}
		\begin{aligned}
			\Big|\int_{\Sigma}\nabla_g\big(v^n_1-\overline{v}^n_1+w_n-2\overline{\rho}_1(x^0_1)G_{x^0_1}\big)\nabla_g\phi_0\,dV_g\Big|&\leq C\int_{\Sigma}\Big|2\rho^n_1h_1e^{-H_1}e^{v^n_1}-2\overline{\rho}_1(x^0_1)\delta_{x^0_1}\Big|dV_g+o_n(1)\rightarrow0,
		\end{aligned}
	\end{equation*}
which implies that $\|\nabla_g\big(v^n_1-\overline{v}^n_1+w_n-2\overline{\rho}_1(x^0_1)G_{x^0_1}\big)\|_{L^p(\Sigma)}\rightarrow0$. This fact together with Poincare inequality yields that $v^n_1-\overline{v}^n_1+w_n\rightharpoonup2\overline{\rho}_1(x^0_1)G_{x^0_1}$ weakly in $W^{1,p}(\Sigma)$. 
	\par

	In addition, for any $\Omega\subset\subset\Sigma\backslash\{x^0_1\}$, by applying standard elliptic estimates, it is obvious that $v^n_1-\overline{v}^n_1+w_n\rightarrow2\overline{\rho}_1(x^0_1)G_{x^0_1}$ in $C^1(\Omega\backslash\{0\})\cap C^0(\Omega)\cap W^{2,s}(\Omega)$ for any $s\in(1,-{\frac{1}{\alpha_1}})$ as $-1<\alpha_1<0$ and in $C^1_{\rm{loc}}(\Omega)$ as $\alpha_1\geq0$. The same applies to $v^n_2-\overline{v}^n_2-2w_n$. Thus, this lemma has been confirmed.
\end{proof}
\par

			\begin{remark}\label{AF}
			According to Remark \ref{QuantiS-R} and Proposition \ref{WeakConver}, we can attain the limit Liouville-type equation which $w_0$ satisfies. That is, $w_0$ is the solution of the following Liouville equation,
			\begin{equation}\label{AF-1}
					\begin{cases}
						-\Delta_gw_0=\rho_2(\frac{h_2e^{-4\pi\alpha_2G_0-\rho_1G_{x^0_1}}}{\int_{\Sigma}h_2e^{-4\pi\alpha_2G_0-\rho_1G_{x^0_1}+2w_0}dV_g}e^{2w_0}-1)\\
						\int_{\Sigma}w_0dV_g=0.
					\end{cases}
			\end{equation}
			\par

			Since $0<\rho_2<4\pi(1+\overline{\alpha}_2)$, then by resorting to \cite[Theorem 1.9]{BGJM}, $w_0$ is the unique solution of \eqref{AF-1}.
		\end{remark}
		\par
	
	    In the sequel, after scaling, we will explore the limit formula of $v^n_1$ near the blow-up point. 
		\begin{lemma}\label{Blow-upP}   
			Let $\lambda^n_1:=\max\limits_{x\in\Sigma}v^n_1=v^n_1(x^n_1)\rightarrow+\infty,\ x^n_1\rightarrow x^0_1$ and define $\tilde{v}^n_1:=v^n_1(x^n_1+r^n_1x)+2\big(1+\alpha_1(x^0_1)\big)\log r^n_1,\,r^n_1:=e^{-\frac{\lambda^n_1}{2(1+\alpha_1(x^0_1))}}$.
			\par

		If $x^0_1=0$, then $\tilde{v}^n_1\rightarrow\tilde{v}_1$ in $C^1_{\rm{loc}}(\mathbb{R}^2\backslash\{0\})\cap C^0_{\rm{loc}}(\mathbb{R}^2)\cap W^{2,s}_{\rm{loc}}(\mathbb{R}^2)$ for some $s>1$ as $\min\limits_{i=1,2}\{\alpha_i\}<0$ and $\tilde{v}^n_1\rightarrow\tilde{v}_1$ in $C^1_{\rm{loc}}(\mathbb{R}^2)$ as $\min\limits_{i=1,2}\{\alpha_i\}=0$, where
			\begin{equation*}
				\tilde{v}_1(x)=-2\log\big(1+{\frac{{\pi}h_1(0)e^{-\frac{\overline{\alpha}_1}{2}A(0)} }{1+\overline{\alpha}_1}}|x|^{2(1+\overline{\alpha}_1)}\big);
			\end{equation*}
			\par

		If $x^0_1\neq0$, then $\tilde{v}^n_1\rightarrow\tilde{v}_1$ in $C^1_{\rm{loc}}(\mathbb{R}^2)$, where
			\begin{equation*}
				\tilde{v}_1(x)=-2\log\big(1+\pi h_1(x^0_1)e^{-4\pi\alpha_1G_0(x^0_1)}|x|^2\big).
			\end{equation*}

		\end{lemma}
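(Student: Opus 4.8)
The plan is to run a rescaling (blow-up) analysis on the first equation of \eqref{main-eq-1} centred at the maximum point $x^n_1$, to pass to a limiting Liouville equation, and then to pin down the limit explicitly through the classification of entire solutions of (singular) Liouville equations together with the mass information already extracted in Lemma \ref{CPoint}. First I would fix an isothermal chart at $x^0_1$ with $g=e^{\phi}|dx|^2$, $\phi(x^0_1)=0$, and record the de-singularized weight: from $G_0=\frac{1}{8\pi}(-4\log r+A(0)+\sigma)$ one gets, for $x^0_1=0$, $e^{-H_1(y)}=|y|^{2\alpha_1}e^{-\frac{\alpha_1}{2}A(0)}(1+o(1))$ as $y\to0$, while for $x^0_1\neq0$ the weight $e^{-H_1}$ is smooth and positive near $x^0_1$ with value $e^{-4\pi\alpha_1G_0(x^0_1)}$. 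The normalization $r^n_1=e^{-\lambda^n_1/(2(1+\alpha_1(x^0_1)))}$ is chosen precisely so that $\tilde v^n_1(0)=0$ and $\tilde v^n_1\le0$ on $\R^2$. Using $\Delta_g=e^{-\phi}\Delta_{\R^2}$ and $(r^n_1)^2e^{v^n_1(y)}=(r^n_1)^{-2\alpha_1(x^0_1)}e^{\tilde v^n_1(x)}$ with $y=x^n_1+r^n_1x$, the first equation becomes
\begin{equation*}
-\Delta_{\R^2}\tilde v^n_1=2\rho^n_1 h_1(y)\,e^{-H_1(y)}(r^n_1)^{-2\alpha_1(x^0_1)}e^{\phi(y)}e^{\tilde v^n_1}+\mathcal G^n,
\end{equation*}
where $\mathcal G^n$ collects $-\rho_2 h_2 e^{-H_2}e^{v^n_2}$ and the constant $-(2\rho^n_1-\rho_2)$, each multiplied by $(r^n_1)^2e^{\phi(y)}$, and where $e^{-H_1(y)}(r^n_1)^{-2\alpha_1(x^0_1)}=|x+x^n_1/r^n_1|^{2\alpha_1(x^0_1)}e^{-\frac{\alpha_1(x^0_1)}{2}A(0)}(1+o(1))$ when $x^0_1=0$.

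Next I would show $\mathcal G^n\to0$ in $L^q_{\mathrm{loc}}(\R^2)$ and pass to the limit. The constant contribution carries the factor $(r^n_1)^2\to0$; for the $v^n_2$ contribution I would use that $v^n_2$ is bounded from above (Lemma \ref{CPoint}), so $h_2e^{-H_2}e^{v^n_2}\in L^q_{\mathrm{loc}}$ with $q<-1/\alpha_2$ when $\alpha_2<0$ and $q=+\infty$ when $\alpha_2\ge0$ (exactly as in Remark \ref{QuantiS-R}), and the prefactor $(r^n_1)^2$ again forces it to $0$. Elliptic $L^p$/Schauder estimates, together with $\tilde v^n_1\le0$ and $\tilde v^n_1(0)=0$, then give the convergence in the stated norms, the lower regularity $C^1_{\mathrm{loc}}(\R^2\setminus\{0\})\cap C^0_{\mathrm{loc}}(\R^2)\cap W^{2,s}_{\mathrm{loc}}(\R^2)$ reflecting the singular weights when $\min_i\alpha_i<0$. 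Writing $x_0:=\lim_n x^n_1/r^n_1$, the limit $\tilde v_1$ solves
\begin{equation*}
-\Delta_{\R^2}\tilde v_1=2\overline{\rho}_1(x^0_1)\,\beta\,|x+x_0|^{2\alpha_1(x^0_1)}e^{\tilde v_1},
\end{equation*}
with $\beta=h_1(0)e^{-\frac{\alpha_1}{2}A(0)}$ when $x^0_1=0$ and $\beta=h_1(x^0_1)e^{-4\pi\alpha_1G_0(x^0_1)}$ when $x^0_1\neq0$, subject to $\tilde v_1(0)=0=\max_{\R^2}\tilde v_1$ and finite mass, where $\overline{\rho}_1(x^0_1)$ is as in Remark \ref{QuantiS-R}.

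It then remains to identify $\tilde v_1$. When $x^0_1\neq0$ (so $\alpha_1(x^0_1)=0$) the equation is a regular Liouville equation; by the Chen--Li classification every finite-mass solution equals $-2\log(1+c|x-p|^2)$, and $\tilde v_1(0)=0=\max$ forces $p=0$, while matching the coefficient gives $c=\pi h_1(x^0_1)e^{-4\pi\alpha_1G_0(x^0_1)}$, which is the claimed formula (the total mass $8\pi$ is consistent with $\sigma_1(x^0_1)=4\pi$). When $x^0_1=0$ (so $\alpha_1<0$ by Lemma \ref{CPoint}) the decisive point is to prove $x_0=0$. First I would rule out $|x^n_1|/r^n_1\to+\infty$: in that regime, rescaling instead by $\mu_n=|x^n_1|^{\alpha_1}e^{\lambda^n_1/2}$ as in Lemma \ref{Sign-wf} Case 2(i) produces a regular bubble carrying local mass $8\pi/(2\overline{\rho}_1)=1/(1+\alpha_1)>1$; but the mass concentrated at the unique blow-up point equals $\sigma_1(0)/\overline{\rho}_1=1$ by Lemma \ref{CPoint}, and the integrand $h_1e^{-H_1}e^{v^n_1}$ is nonnegative near the origin since $h_1(0)>0$ (Lemma \ref{Sign-wf}), a contradiction. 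Hence $x_0$ is finite, and for $-1<\alpha_1<0$ the Prajapat--Tarantello classification says every finite-mass solution of the singular Liouville equation is radially symmetric about the singular point $-x_0$ and attains its maximum there; since $\tilde v_1$ attains its maximum $0$ at the origin, $-x_0=0$. With $x_0=0$ the radial form is $\tilde v_1=-2\log(1+b|x|^{2(1+\alpha_1)})$, and matching the coefficient (equivalently, the mass $\sigma_1(0)=4\pi(1+\alpha_1)$) fixes $b=\pi h_1(0)e^{-\frac{\alpha_1}{2}A(0)}/(1+\alpha_1)$; since $\overline{\alpha}_1=\alpha_1$ here, this is exactly the stated expression.

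The hard part will be establishing $x_0=0$ in the singular case. A mass count alone cannot decide it, because the singular bubble carries the same admissible mass $4\pi(1+\alpha_1)=\sigma_1(0)$ for every value of $x_0$; the location must instead be forced by the symmetry and maximum-location structure of the singular Liouville classification, combined with the built-in normalization $\tilde v_1(0)=0=\max$. The routine but careful bookkeeping is the $L^q_{\mathrm{loc}}$ decay of $\mathcal G^n$ under the singular weight $|y|^{2\alpha_2}$ when $\alpha_2<0$, which also dictates the exponent $s$ in the $W^{2,s}_{\mathrm{loc}}$ convergence.
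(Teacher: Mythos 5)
Your proposal is correct, and it follows the paper's skeleton (rescaling at $x^n_1$, vanishing of the error term $\mathcal G_n$ in $L^q_{\rm loc}$, elliptic estimates, then classification via Chen--Li and Prajapat--Tarantello), but it diverges from the paper at the one genuinely delicate point: locating the singularity relative to the maximum point when $x^0_1=0$. The paper settles this in one line by citing Theorem 4.1 of Bartolucci--Tarantello \cite{BT}, which directly yields $|x^n_1|/r^n_1\rightarrow0$, so the limit equation automatically carries the weight $|x|^{2\overline{\alpha}_1}$ centered at the origin. You instead give a self-contained two-step exclusion: (a) the regime $|x^n_1|/r^n_1\rightarrow+\infty$ is impossible because the secondary rescaling $\mu_n=|x^n_1|^{\alpha_1}e^{\lambda^n_1/2}$ (already set up in Case 2(i) of Lemma \ref{Sign-wf}) produces a regular Chen--Li bubble of normalized mass $8\pi/(2\overline{\rho}_1)=1/(1+\alpha_1)>1$, exceeding the total normalized mass $\sigma_1(0)/\overline{\rho}_1=1$ available at the unique blow-up point, a genuine contradiction since $h_1>0$ near $0$ (Lemma \ref{Sign-wf}) makes the local masses monotone in the radius; (b) a finite nonzero shift $x_0$ is impossible because Prajapat--Tarantello forces any finite-mass solution of the limit equation to be radial about the singular point $-x_0$ with its unique maximum there, while the built-in normalization $\tilde v_1(0)=0=\max\tilde v_1$ puts the maximum at $0$. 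Both steps are sound: (a) reuses exactly the mass bookkeeping the paper establishes in Lemmas \ref{Sign-wf} and \ref{CPoint}, and (b) uses only the symmetry half of the same classification theorem \cite{PT} that the paper cites anyway, so your route trades the black-box appeal to \cite{BT} for a slightly longer but self-contained argument, whereas the paper's route is shorter and never has to discuss subsequential limits of $x^n_1/r^n_1$. Two minor points to patch: Lemma \ref{CPoint} only gives $\alpha_1\leq0$ (not $\alpha_1<0$) when $x^0_1=0$, so the borderline case $\alpha_1=0$ at the origin should be noted as regular and absorbed into the same Chen--Li/unique-maximum argument you use for $x^0_1\neq0$; and in step (b) the convergence statement must initially be formulated with the singular point at $-x_0$ rather than at $0$, which becomes the stated form only after $x_0=0$ is proved.
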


		\begin{proof}
			Select an isothermal coordinate system which is centered at $x^0_1$ and satisfies $g=e^{\phi(x)}|dx|^2$ and $\phi(x^0_1)=0$. \textbf{Case (i)} If $v^n_1$ blows up at the origin, i.e., $x^0_1=0$.
			\par

			Recalling back lemma \ref{CPoint}, it is apparent that $\alpha_1\leq0$ and $e^{v^n_2}$ is uniformly bounded from above. Then by applying Theorem 4.1 in\,\cite{BT}, it yields that $\lim\limits_{n\rightarrow+\infty}{\frac{|x^n_1|}{r^n_1}}=0$. By the definition of $\tilde{v}^n_1(x)$, we can translate the first equation of \eqref{main-eq-1} as follows:
			\begin{equation*}
				-\Delta_{\mathbb{R}^2}\tilde{v}^n_1=2\rho^n_1h_1(x^n_1+r^n_1x)\Big|x+{\frac{x^n_1}{r^n_1}}\Big|^{2\overline{\alpha}_1}e^{\tilde{v}^n_1-{\frac{\overline{\alpha}_1}{2}}A(0)+\phi(x^n_1+r^n_1x)}+\mathcal{G}_n(x)\ \ \text{in}\ B_{\tilde{r}}(0),
			\end{equation*}
			where
			$\mathcal{G}_n:=\Big(-\rho_2h_2(x^n_1+r^n_1x)\Big|x+{\frac{x^n_1}{r^n_1}}\Big|^{2\alpha_2}e^{v^n_2(x^n_1+r^n_1x)-{\frac{\alpha_2}{2}}A(0)}(r^n_1)^{2(1+\alpha_2)}-\big(2\rho^n_1-\rho_2\big)\cdot(r^n_1)^2\Big)e^{\phi(x^n_1+r^n_1x)}$.
			\par

			Since $\mathcal{G}_n\rightarrow0\,\ \text{in}\, L^q_{\rm{loc}}(\mathbb{R}^2)$ for $q\in\,(1,-{\frac{1}{\alpha_2}})$ as $\alpha_2<0$ and $q=+\infty$ as $\alpha_2\geq0$, then by elliptic estimates, there exists some function $\tilde{v}_1\in W^{2,s}_{\rm{loc}}(\mathbb{R}^2)$ such that $\tilde{v}^n_1\rightarrow\tilde{v}_1$ in $C^1_{\rm{loc}}(\mathbb{R}^2\backslash\{0\})\cap C^0_{\rm{loc}}(\mathbb{R}^2)\cap W^{2,s}_{\rm{loc}}(\mathbb{R}^2)$ for some $s>1$ as $\min\limits_{i=1,2}\{\alpha_i\}<0$ and $\tilde{v}^n_1\rightarrow\tilde{v}_1$ in $C^1_{\rm{loc}}(\mathbb{R}^2)$ as $\min\limits_{i=1,2}\{\alpha_i\}=0$.
			\par

			Let $n\rightarrow+\infty,$ we can arrive at the limit equation about $\tilde{v}_1$:
			\begin{equation*}
				\begin{cases}
					-\Delta_{\mathbb{R}^2}\tilde{v}_1=8\pi(1+\alpha_1)h_1(0)e^{-\frac{\overline{\alpha}_1}{2}A(0)}|x|^{2\overline{\alpha}_1}e^{\tilde{v}_1}\ \ \text{in}\ \mathbb{R}^2,\\
					\\
					\int_{\mathbb{R}^2}|x|^{2\overline{\alpha}_1}e^{\tilde{v}_1}dx<+\infty.
				\end{cases}
			\end{equation*}
			\par

			Relying on the classification result in\,\cite{Chen-Li}\,or\,\cite{PT}, we can attain
			\begin{equation}\label{Blow-upP-1}
				\tilde{v}_1(x)=-2\log\big(1+{\frac{\pi h_1(0)e^{-\frac{\overline{\alpha}_1}{2}A(0)}}{1+\overline{\alpha}_1}}|x|^{2(1+\overline{\alpha}_1)}\big),\ \  h_1(0)e^{{-\frac{\overline{\alpha}_1}{2}}A(0)}\int_{\mathbb{R}^2}|x|^{2\overline{\alpha}_1}e^{\tilde{v}_1}dx=1.
			\end{equation}
			\par

			\textbf{Case(ii)} If $v^n_1$ blows up at $x^0_1\neq0$, it follows from Lemma \ref{CPoint} that $\alpha_1\geq0$. Transform the first equation of \eqref{main-eq-1} as follows: 
			\begin{equation*}
				-\Delta_{\mathbb{R}^2}\tilde{v}^n_1=2\rho^n_1h_1(x^n_1+r^n_1x)e^{-4\pi\alpha_1G_0(x^n_1+r^n_1x)}e^{\tilde{v}^n_1(x)+\phi(x^n_1+r^n_1x)}+\mathcal{G}_n(x)\ \ \text{in}\ B_{\tilde{r}}(0),
			\end{equation*}
			where $\mathcal{G}_n:=\Big(-\rho_2h_2(x^n_1+r^n_1x)e^{-4\pi\alpha_2G_0(x^n_1+r^n_1x)}e^{v^n_2(x^n_1+r^n_1x)}(r^n_1)^2-\big(2\rho^n_1-\rho_2\big)(r^n_1)^2\Big)e^{\phi(x^n_1+r^n_1x)}$.
			\par

			Since $\mathcal{G}_n\rightarrow0\,\ \text{in}\, L^{\infty}_{\rm{loc}}(\mathbb{R}^2)$, then by elliptic estimates, there exists some function $\tilde{v}_1\in W^{2,s}_{\rm{loc}}(\mathbb{R}^2)$ for any $s>1$ such that $\tilde{v}^n_1\rightarrow\tilde{v}_1$ in $C^1_{\rm{loc}}(\mathbb{R}^2)$.
			\par

			Let $n\rightarrow+\infty,$ we can derive the limit equation about $\tilde{v}_1$:
			\begin{equation*}
				\begin{cases}
					-\Delta_{\mathbb{R}^2}\tilde{v}_1={8\pi}h_1(x^0_1)e^{-4\pi\alpha_1G_0(x^0_1)}e^{\tilde{v}_1}\ \ \text{in}\ \mathbb{R}^2,\\
					\\
					\int_{\mathbb{R}^2}e^{\tilde{v}_1}dx<+\infty.
				\end{cases}
			\end{equation*}			
			\par

			By applying the classification result of Chen and Li in\,\cite{Chen-Li}, it yields that
			\begin{equation*}
				\tilde{v}_1(x)=-2\log\big(1+\pi h_1(x^0_1)e^{-4\pi\alpha_1G_0(x^0_1)}|x|^2\big),\ \ h_1(x^0_1)e^{-4\pi\alpha_1G_0(x^0_1)}\int_{\mathbb{R}^2}e^{\tilde{v}_1}dx=1.			
			\end{equation*}	
			\par
			
			Thus this lemma has been confirmed.
		\end{proof}

\begin{remark}
It follows from Lemma\ref{Blow-upP}, we can generalize $\tilde{v}_1(x)$ in the following form:
\begin{equation*}
		 	\tilde{v}_1(x):=-2\log\big(1+{\frac{\pi\tilde{h}_1(x^0_1)}{1+\alpha_1(x^0_1)}}|x|^{2(1+\alpha_1(x^0_1))}\big),
\end{equation*}
where $\tilde{h}_1(x^0_1):=h_1(0)e^{-\frac{\overline{\alpha}_1}{2}A(0)}$ as $x^0_1=0$ and $\tilde{h}_1(x^0_1):=h_1(x^0_1)e^{-4\pi G_0(x^0_1)}$ as $x^0_1\neq0$.
\end{remark}	
\par

Subsequently, let $\zeta^n_1:=v^n_1+w_n$ and $\zeta^n_2:=v^n_2-2w_n$ and we will reveal the equivalent relationship between $\zeta^n_1-\overline{\zeta}^n_1$ and $\zeta^n_2-\overline{\zeta}^n_2$. 
\par
\begin{lemma}\label{QuantiS}
For any blow-up solution $(v^n_1,\,v^n_2)$ of system \eqref{main-eq-1}, we have that
\begin{equation*}
\zeta^n_1-\overline{\zeta}^n_1+2\zeta^n_2-2\overline{\zeta}^n_2=0,\,\ \text{for any}\ x\in\Sigma.
\end{equation*}
\end{lemma}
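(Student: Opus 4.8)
The plan is to show that the combination $\zeta^n_1 + 2\zeta^n_2$ is harmonic on $\Sigma$, hence constant, which is exactly the asserted identity once mean values are subtracted. First I would compute the Laplacian of $v^n_1 + 2v^n_2$ directly from system \eqref{main-eq-1}, by adding the first equation to twice the second. The terms carrying $h_1 e^{-H_1} e^{v^n_1}$ appear with coefficients $2\rho^n_1$ (from the first equation) and $-2\rho^n_1$ (from twice the second) and therefore cancel, while the $h_2 e^{-H_2} e^{v^n_2}$ terms combine with coefficients $-\rho_2 + 4\rho_2 = 3\rho_2$. This yields
\begin{equation*}
-\Delta_g\big(v^n_1 + 2v^n_2\big) = 3\rho_2\big(h_2 e^{-H_2} e^{v^n_2} - 1\big).
\end{equation*}

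Next I would invoke the defining equation for $w_n$ from Remark \ref{QuantiS-R}, namely $-\Delta_g w_n = \rho_2\big(h_2 e^{-H_2} e^{v^n_2} - 1\big)$. Multiplying this by $3$ and subtracting from the previous display gives $-\Delta_g\big(v^n_1 + 2v^n_2 - 3w_n\big) = 0$. Since, by definition, $\zeta^n_1 + 2\zeta^n_2 = (v^n_1 + w_n) + 2(v^n_2 - 2w_n) = v^n_1 + 2v^n_2 - 3w_n$, the function $\zeta^n_1 + 2\zeta^n_2$ is harmonic on the closed Riemann surface $(\Sigma,g)$. Because $\Sigma$ has no boundary, integration by parts forces $\int_\Sigma |\nabla_g(\zeta^n_1 + 2\zeta^n_2)|^2\,dV_g = 0$, so $\zeta^n_1 + 2\zeta^n_2$ is constant and hence equals its own mean value $\overline{\zeta}^n_1 + 2\overline{\zeta}^n_2$. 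Rearranging produces $\zeta^n_1 - \overline{\zeta}^n_1 + 2\big(\zeta^n_2 - \overline{\zeta}^n_2\big) = 0$ pointwise on $\Sigma$, as claimed.

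I do not expect any substantive obstacle here: the entire content is the algebraic cancellation of the $h_1 e^{-H_1}e^{v^n_1}$-source in the linear combination $v^n_1 + 2v^n_2$, together with the observation that the surviving $3\rho_2$-source is matched exactly by $3$ times the equation defining $w_n$. The only point requiring a little care is bookkeeping the coefficients $2\rho^n_1$ and $\rho_2$ in the two equations of \eqref{main-eq-1}, to confirm that the specific choice $\zeta^n_1 = v^n_1 + w_n$, $\zeta^n_2 = v^n_2 - 2w_n$ is precisely the one that makes the $w_n$ contribution annihilate the residual source. Everything else reduces to the standard fact that a harmonic function on a compact boundaryless manifold must be constant.
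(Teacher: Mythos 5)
Your proposal is correct and follows essentially the same route as the paper: both arguments reduce to showing $\zeta^n_1+2\zeta^n_2=v^n_1+2v^n_2-3w_n$ is harmonic on the closed surface (you cancel the $h_1e^{-H_1}e^{v^n_1}$ source first and absorb the remaining $3\rho_2$ source via the $w_n$ equation, while the paper computes $-\Delta_g\zeta^n_1$ and $-\Delta_g\zeta^n_2$ separately and cancels in the combination — the same algebra in a different order), and then conclude constancy from $\int_\Sigma|\nabla_g(\zeta^n_1+2\zeta^n_2)|^2\,dV_g=0$. No gap.
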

\begin{proof}
			Since $\zeta^n_1$ and $\zeta^n_2$ satisfy the following equations separately,
			\begin{equation*}
				-\Delta_g\zeta^n_1={2\rho^n_1}h_1e^{-H_1}e^{v^n_1}-2\rho^n_1,\ 
				-\Delta_g\zeta^n_2=-\rho^n_1h_1e^{-H_1}e^{v^n_1}+\rho^n_1.
			\end{equation*}
			\par
			
			By combining the two equations, we can acquire $-\Delta_g(\zeta^n_1+2\zeta^n_2-\overline{\zeta}^n_1-2\overline{\zeta}^n_2)=0$, which implies that
			\begin{equation}\label{QuantiS-3}
				\int_{\Sigma}\big|\Delta_g(\zeta^n_1+2\zeta^n_2-\overline{\zeta}^n_1-2\overline{\zeta}^n_2)\big|^2dV_g=0.
			\end{equation}
			\par
			
			On one hand, multiplying $\varphi:=\zeta^n_1+2\zeta^n_2-\overline{\zeta}^n_1-2\overline{\zeta}^n_2$ in both sides of\,$-\Delta_g(\zeta^n_1+2\zeta^n_2-\overline{\zeta}^n_1-2\overline{\zeta}^n_2)=0$ and integrating over $\Sigma$, we can have that
			\begin{equation}\label{QuantiS-4}
				\|\nabla_g(\zeta^n_1+2\zeta^n_2-\overline{\zeta}^n_1-2\overline{\zeta}^n_2)\|_{L^2(\Sigma)}=0.
			\end{equation}
			\par
			
			On the other hand, by utilizing Poincar\'e inequality, we can arrive at
			\begin{equation}\label{QuantiS-5}
				\|\zeta^n_1+2\zeta^n_2-\overline{\zeta}^n_1-2\overline{\zeta}^n_2\|_{L^2(\Sigma)}\leq C\|\nabla_g\big(\zeta^n_1+2\zeta^n_2-\overline{\zeta}^n_1-2\overline{\zeta}^n_2\big)\|_{L^2(\Sigma)}=0.
			\end{equation}
			\par
			
			Combining \eqref{QuantiS-3},\eqref{QuantiS-4} and \eqref{QuantiS-5} together, we can deduce that $\|\zeta^n_1+2\zeta^n_2-\overline{\zeta}^n_1-2\overline{\zeta}^n_2\|_{W^{2,2}(\Sigma)}=0.$ After applying Sobolev embedding theorem, it yields that \
			\begin{equation*}
				\zeta^n_1+2\zeta^n_2-\overline{\zeta}^n_1-2\overline{\zeta}^n_2=0\ \ \text{in}\ C^0(\Sigma).
			\end{equation*}
			\par
			
			Thus, this lemma has been proved.
		\end{proof}
		\par

\begin{remark}
Actually by recalling back Lemma\,\ref{QuantiS}, we can obtain that $v^n_2(x)=-{\frac{1}{2}}v^n_1+{\frac{1}{2}}\overline{v}^n_1+{\frac{3}{2}}w_n(x)+\overline{v}^n_2$.
\end{remark}
		\par

	\section{Lower boundedness}
	In this section, under the situation that $v^n_1$ blows up at most at the point $x^0_1$ and $v^n_2$ is uniformly bounded from above, we will derive the lower boundedness of the infimum value of the energy function $J_{\overline{\rho}_1}$.
	\begin{lemma}\label{LB}
		Let $(v^n_1,\,v^n_2)$ be the blow-up solution of \eqref{main-eq-1}.
		\par
		
	 \textbf{(i)} If $\alpha_1>0$, we can obtain that
		\begin{equation*}
			\begin{aligned}
				\inf\limits_{H^1(\Sigma)}J_{\overline{\rho}_1}\geq&\min\limits_{x\in\Sigma_+\backslash\{0\}}\Big(\int_{\Sigma}|\nabla_gw_0|^2dV_g-4\pi\int_{\Sigma}\nabla_gG_{x}\nabla_gw_0dV_g-\rho_2\log\int_{\Sigma}h_2e^{-4\pi\alpha_2G_0-{4\pi}G_{x}}e^{2w_0}dV_g\\
				&\quad-2{\pi}A(x)+{4\pi}w_0(x)-4\pi\log\big( h_1(x)e^{-4\pi\alpha_1G_0(x)}\big)\Big)-4\pi\log\pi-4\pi.
			\end{aligned}
		\end{equation*}
		\par

		\textbf{(ii)}\ If $\alpha_1<0$, then
	\begin{equation*}
		\begin{aligned}
			\inf\limits_{H^1(\Sigma)}J_{\overline{\rho}_1}\geq&\int_{\Sigma}|\nabla_gw_0|^2dV_g-\overline{\rho}_1\int_{\Sigma}\nabla_gG_0\nabla_gw_0dV_g-\rho_2\log\int_{\Sigma}h_2e^{-4\pi\alpha_2G_0-\overline{\rho}_1G_0}e^{2w_0}dV_g\\
			&\quad-{\frac{(\overline{\rho}_1)^2}{8\pi}}A(0)+\overline{\rho}_1w_0(0)-\overline{\rho}_1\log\big({\frac{{h_1(0)e^{-\frac{\overline{\alpha}_1}{2}A(0)}}}{1+\overline{\alpha}_1}}\big)-\overline{\rho}_1\log\pi-\overline{\rho}_1.
		\end{aligned}
	\end{equation*}
	\par

	\textbf{(iii)}\ If $\alpha_1=0$, then
	\begin{equation*}
		\begin{aligned}
			\inf\limits_{H^1(\Sigma)}J_{\overline{\rho}_1}\geq&\min\limits_{x\in\Sigma_+}\Big(\int_{\Sigma}|\nabla_gw_0|^2dV_g-4\pi\int_{\Sigma}\nabla_gG_{x}\nabla_gw_0dV_g-\rho_2\log\int_{\Sigma}h_2e^{-4\pi\alpha_2G_0-{4\pi}G_{x}}e^{2w_0}dV_g\\
			&\quad-2{\pi}A(x)+{4\pi}w_0(x)-4\pi\log{h_1(x)}\Big)-4\pi\log\pi-4\pi,
		\end{aligned}
	\end{equation*}
where $\Sigma_+:=\{x\in\Sigma:\ h_1(x)>0\}$.	
	\end{lemma}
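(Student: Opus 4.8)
The plan is to bound $\inf_{H^1}J_{\overline{\rho}_1}$ below by the limiting energy of the blow-up sequence and then to evaluate that limit explicitly. Since each $(v^n_1,v^n_2)$ minimises $J_{\rho^n_1}$ over $\mathcal{H}$ and $J_\rho(v_1,v_2)$ is affine (hence continuous) in $\rho$, for every fixed admissible pair one has $J_{\rho^n_1}(v^n_1,v^n_2)\le J_{\rho^n_1}(v_1,v_2)\to J_{\overline{\rho}_1}(v_1,v_2)$, so taking the infimum over $(v_1,v_2)$ gives $\inf_{H^1}J_{\overline{\rho}_1}\ge\limsup_n J_{\rho^n_1}(v^n_1,v^n_2)$. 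It therefore suffices to compute $\lim_n J_{\rho^n_1}(v^n_1,v^n_2)$ under blow-up. Using the constraints $\int_\Sigma h_ie^{-H_i}e^{v^n_i}dV_g=1$, the two logarithmic terms collapse to $\rho^n_1\overline{v}^n_1+\rho_2\overline{v}^n_2$, whence $J_{\rho^n_1}(v^n_1,v^n_2)=\int_\Sigma Q(v^n_1,v^n_2)dV_g+\rho^n_1\overline{v}^n_1+\rho_2\overline{v}^n_2$.

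Next I would diagonalise the quadratic form. Setting $\psi_n:=v^n_1-\overline{v}^n_1+w_n$ and invoking Lemma \ref{QuantiS} to write $v^n_2-\overline{v}^n_2=2w_n-\frac{1}{2}\psi_n$, a direct algebraic computation turns the Dirichlet part into $\int_\Sigma Q=\frac{1}{4}\int|\nabla_g\psi_n|^2+\int|\nabla_g w_n|^2-\frac{1}{2}\int\nabla_g\psi_n\nabla_g w_n$. The three soft limits follow from the convergences already established: $\int|\nabla_g w_n|^2\to\int|\nabla_g w_0|^2$ by Remark \ref{QuantiS-R}; since $-\Delta_g\psi_n=2\rho^n_1(h_1e^{-H_1}e^{v^n_1}-1)$ and $h_1e^{-H_1}e^{v^n_1}\rightharpoonup\delta_{x^0_1}$ (Proposition \ref{WeakConver}), integration by parts gives $-\frac{1}{2}\int\nabla_g\psi_n\nabla_g w_n\to-\overline{\rho}_1(x^0_1)\int\nabla_g G_{x^0_1}\nabla_g w_0$; and the constraint on $v^n_2$ together with $2w_n-\frac{1}{2}\psi_n\to2w_0-\overline{\rho}_1(x^0_1)G_{x^0_1}$ yields $\rho_2\overline{v}^n_2\to-\rho_2\log\int_\Sigma h_2e^{-4\pi\alpha_2 G_0-\overline{\rho}_1(x^0_1)G_{x^0_1}}e^{2w_0}dV_g$.

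The heart of the argument is the remaining divergent block $\frac{1}{4}\int|\nabla_g\psi_n|^2+\rho^n_1\overline{v}^n_1$. I would use $\frac{1}{4}\int|\nabla_g\psi_n|^2=\frac{\rho^n_1}{2}\int_\Sigma\psi_n h_1e^{-H_1}e^{v^n_1}dV_g$ and split $\psi_n=(v^n_1-\lambda^n_1)+(\lambda^n_1-\overline{v}^n_1)+w_n$. The $w_n$ piece tends to $\frac{\overline{\rho}_1(x^0_1)}{2}w_0(x^0_1)$; the first piece, after the rescaling of Lemma \ref{Blow-upP}, converges to $\frac{\overline{\rho}_1(x^0_1)}{2}\int_{\R^2}\tilde{v}_1\,\tilde{h}_1(x^0_1)|y|^{2\alpha_1(x^0_1)}e^{\tilde{v}_1}dy$, and the explicit profile gives the universal value $\int_{\R^2}\tilde{v}_1\tilde{h}_1(x^0_1)|y|^{2\alpha_1(x^0_1)}e^{\tilde{v}_1}dy=-2$ (from $\int_0^{\infty}\frac{\log(1+t)}{(1+t)^2}dt=1$), producing the constant $-\overline{\rho}_1(x^0_1)$. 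What remains is the genuinely divergent $\frac{\rho^n_1}{2}(\lambda^n_1-\overline{v}^n_1)+\rho^n_1\overline{v}^n_1=\frac{\rho^n_1}{2}(\lambda^n_1+\overline{v}^n_1)$; matching the inner expansion $\tilde{v}_1(y)\sim-2\log c-4(1+\alpha_1(x^0_1))\log|y|$ at infinity, with $c=\frac{\pi\tilde{h}_1(x^0_1)}{1+\alpha_1(x^0_1)}$, against the outer profile $v^n_1-\overline{v}^n_1\to2\overline{\rho}_1(x^0_1)G_{x^0_1}-w_0$ across an intermediate radius shows the two logarithms in $r^n_1$ cancel and $\lambda^n_1+\overline{v}^n_1\to-2\log c-(1+\alpha_1(x^0_1))A(x^0_1)+w_0(x^0_1)$. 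Collecting all contributions, the explicit $w_0(x^0_1)$-terms assemble into $+\overline{\rho}_1(x^0_1)w_0(x^0_1)$ (completing the Green pairing above), $-\overline{\rho}_1(x^0_1)\log c$ splits as $-\overline{\rho}_1(x^0_1)\log\pi-\overline{\rho}_1(x^0_1)\log\frac{\tilde{h}_1(x^0_1)}{1+\alpha_1(x^0_1)}$, and the $A(x^0_1)$ term becomes $-\frac{\overline{\rho}_1(x^0_1)^2}{8\pi}A(x^0_1)$; this reproduces the stated expression.

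Finally I would separate the three regimes. When $\alpha_1<0$, Lemma \ref{CPoint} forces $x^0_1=0$ with $\overline{\rho}_1(0)=4\pi(1+\alpha_1)$ and $\tilde{h}_1(0)=h_1(0)e^{-\frac{\overline{\alpha}_1}{2}A(0)}$, giving conclusion (ii) with no minimisation. When $\alpha_1\ge0$ the blow-up point is free in $\Sigma_+$ (and $\ne0$ if $\alpha_1>0$, by Lemmas \ref{Sign-wf}--\ref{CPoint}), with $\overline{\rho}_1(x^0_1)=4\pi$ and $\tilde{h}_1(x^0_1)=h_1(x^0_1)e^{-4\pi\alpha_1 G_0(x^0_1)}$; since $\inf_{H^1}J_{\overline{\rho}_1}\ge\lim_n J_{\rho^n_1}(v^n_1,v^n_2)$ holds for whichever admissible point the sequence selects, one passes to the minimum over $x\in\Sigma_+\backslash\{0\}$ (resp. $\Sigma_+$), yielding (i) (resp. (iii)). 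The main obstacle is the neck analysis behind the matching step: one must show that the bubble of Lemma \ref{Blow-upP} carries all the mass and energy up to an $o(1)$ error on the annulus $B_\delta(x^0_1)\backslash B_{Lr^n_1}(x^n_1)$ and that the inner and outer expansions glue with uniform control. This is precisely where the Harnack and boundary-oscillation estimates of Lemma \ref{CPoint}, the sharp decay of $\tilde{v}_1$, and the integrability of the singular weights for $\alpha_i<0$ (which requires $\rho_2<4\pi(1+\overline{\alpha}_2)$) are indispensable.
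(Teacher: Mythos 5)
Your overall skeleton matches the paper's: you reduce the claim to bounding $\lim_n J_{\rho^n_1}(v^n_1,v^n_2)$ from below, you use the same diagonalization $Q=\frac14|\nabla_g\psi_n|^2+|\nabla_g w_n|^2-\frac12\nabla_g\psi_n\nabla_g w_n$ with $\psi_n=v^n_1-\overline{v}^n_1+w_n$, and you handle the soft terms via Proposition \ref{WeakConver} and Remark \ref{QuantiS-R}; your bookkeeping of constants (the value $-2$ of the bubble integral, the splitting of $-\overline{\rho}_1(x^0_1)\log c$, the coefficient $-\frac{(\overline{\rho}_1(x^0_1))^2}{8\pi}A(x^0_1)$) is arithmetically consistent with the stated formula. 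The departure, and the genuine gap, is in how you treat the divergent block $\frac14\int_\Sigma|\nabla_g\psi_n|^2\,dV_g+\rho^n_1\overline{v}^n_1$. Your argument rests on two unproved sharp estimates: \textbf{(a)} that $\int_\Sigma h_1e^{-H_1}e^{v^n_1}(v^n_1-\lambda^n_1)\,dV_g$ concentrates on the bubble, i.e.\ that the neck annulus $B_\delta(x^n_1)\setminus B_{Rr^n_1}(x^n_1)$, where $|v^n_1-\lambda^n_1|$ is unbounded, contributes only $o_R(1)$; and \textbf{(b)} the matching limit $\lambda^n_1+\overline{v}^n_1\to-2\log c-(1+\alpha_1(x^0_1))A(x^0_1)+w_0(x^0_1)$. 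Estimate (b) is precisely the ``no energy loss in the neck'' statement: a priori $\lambda^n_1+\overline{v}^n_1$ could tend to $-\infty$, in which case (within your decomposition) $J_{\rho^n_1}(v^n_1,v^n_2)\to-\infty$, which is exactly the scenario the lemma must exclude. None of the results you cite rules this out: Lemma \ref{CPoint} and Proposition \ref{WeakConver} give Harnack/oscillation bounds and convergence away from $x^0_1$ (i.e.\ $O(1)$ information on fixed annuli), while Lemma \ref{Blow-upP} controls $v^n_1$ only on balls of radius $Rr^n_1$; neither controls the transition region at scales between $Rr^n_1$ and $\delta$, which is where both (a) and (b) live. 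Saying that ``the inner and outer expansions glue'' is the desired conclusion, not an argument; for minimizing sequences this gluing is only known a posteriori (it is essentially the content of Corollary \ref{Non-thm}, obtained by combining the lower bound with the test-function upper bound), and proving it directly would require Chen--Lin type sharp pointwise estimates that are neither established in this paper (singular weights, sign-changing $h_i$, system coupling) nor invoked by you.

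The paper circumvents exactly this difficulty by proving a one-sided estimate instead of an exact limit: it splits $\Sigma$ into the outer region, the bubble, and the neck, and on the neck it applies the Dirichlet principle (following Li--Zhu), comparing $v^n_1+w_0$ with the harmonic function on the annulus having the same boundary values $a^n_1,\,b^n_1$. This yields the capacity lower bound $\int_{N_{\delta,n}}|\nabla_g(v^n_1+w_0)|^2\,dV_g\ge\frac{2\pi(a^n_1-b^n_1)^2}{\log\delta-\log(Rr^n_1)}$, which uses only the boundary data already supplied by Lemma \ref{Blow-upP} and Proposition \ref{WeakConver}. Expanding this quadratic expression in $\overline{v}^n_1/\log r^n_1$ and coupling it with the upper boundedness of $J_{\rho^n_1}$ pins down $\lim_n\overline{v}^n_1/\log r^n_1=2(1+\alpha_1(x^0_1))$ and delivers the stated lower bound with no need to identify $\lim_n(\lambda^n_1+\overline{v}^n_1)$. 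To repair your proposal you would have to either prove the one-sided estimate $\liminf_n(\lambda^n_1+\overline{v}^n_1)\ge-2\log c-(1+\alpha_1(x^0_1))A(x^0_1)+w_0(x^0_1)$ by some independent means, or insert the capacity argument for the neck---at which point you have reproduced the paper's proof. As written, the proposal assumes the hardest step.
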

	
	\begin{proof}
		In order to compute gradient term $\int_{\Sigma}Q(v^n_1,v^n_2)dV_g$, we can decompose the Riemann surface $\Sigma$ into three parts and calculate separately.
		\par
		For the part outside the neighborhood of blow-up point $\Sigma\backslash B_\delta(x^n_1)$, by the fact $v^n_1-\overline{v}^n_1+w_n\rightarrow2\overline{\rho}_1(x^0_1)G_{x^0_1}$ and $v^n_2-\overline{v}^n_2-2w_n\rightarrow-\overline{\rho}_1(x^0_1)G_{x^0_1}$ in $C^0_{\rm{loc}}(\Sigma\backslash\{x^0_1\})\cap W^{2,s}_{\rm{loc}}(\Sigma\backslash\{x^0_1\})$ for some $s>1$ in Proposition \ref{WeakConver}, we can acquire that
		\begin{equation}\label{LB-O}
			\begin{aligned}
				&\int_{\Sigma\backslash B_{\delta}(x^n_1)}Q(v^n_1,\,v^n_2)dV_g\\
				=&\big(\overline{\rho}_1(x^0_1)\big)^2\int_{\Sigma\backslash B_{\delta}(x^n_1)}|\nabla_gG_{x^0_1}|^2dV_g+\int_{\Sigma\backslash B_{\delta}(x^n_1)}|\nabla_gw_0|^2dV_g-\overline{\rho}_1(x^0_1)\int_{\Sigma\backslash B_{\delta}(x^n_1)}\nabla_gG_{x^0_1}\nabla_gw_0dV_g\\
				=&\int_{\Sigma}|\nabla_gw_0|^2dV_g-\overline{\rho}_1(x^0_1)\int_{\Sigma}\nabla_gG_{x^0_1}\nabla_gw_0dV_g-{\frac{\big(\overline{\rho}_1(x^0_1)\big)^2}{2\pi}}\log\delta+{\frac{\big(\overline{\rho}_1(x^0_1)\big)^2}{8\pi}}A(x^0_1)+o_{\delta}(1)+o_n(1),
			\end{aligned}
		\end{equation}
where $\overline{\rho}_1(x^0_1)=4\pi(1+\overline{\alpha}_1)$ as $x^0_1=0$ and $\overline{\rho}_1(x^0_1)=4\pi$ as $x^0_1\neq0$.
\par	

		For the part near the blow-up point $B_{Rr^n_1}(x^n_1)$, by exploiting $v^n_2(x)=-{\frac{1}{2}}v^n_1+{\frac{1}{2}}\overline{v}^n_1+{\frac{3}{2}}w_0(x)+\overline{v}^n_2$ in computation of $\int_{B_{Rr^n_1}(x^n_1)}Q(v^n_1,v^n_2)dV_g$, it yields that, 
		\begin{equation}\label{LB-B}
			\begin{aligned}
				\int_{B_{Rr^n_1}(x^n_1)}Q(v^n_1,v^n_2)dV_g=&{\frac{1}{4}}\int_{B_{Rr^n_1}(x^n_1)}|\nabla_gv^n_1|^2dV_g+{\frac{3}{4}}\int_{B_{Rr^n_1}(x^n_1)}|\nabla_gw_0|^2dV_g\\
				=&{\frac{1}{4}}\int_{B_{R}(0)}|\nabla_g\tilde{v}^n_1|^2e^{\psi(x^n_1+r^n_1x)}dx+o_R(1)+o_n(1).\\
				=&{\frac{\pi}{2}}\int^R_0\Big|{\frac{-4(1+\alpha_1(x^0_1))\pi{\tilde{h}_1(x^0_1) }r^{(2\alpha_1(x^0_1)+1)}}{1+\alpha_1(x^0_1)+\pi\tilde{h}_1(x^0_1) r^{2(1+\alpha_1(x^0_1))}}}\Big|^2rdr+o_R(1)+o_n(1)\\
				=&\overline{\rho}_1(x^0_1)\log\big(1+{{\frac{\pi \tilde{h}_1(x^0_1)}{1+\alpha_1(x^0_1)}}R^{2(1+\alpha_1(x^0_1))}}\big)-\overline{\rho}_1(x^0_1)+o_R(1)+o_n(1),
			\end{aligned}
		\end{equation}
		where $\tilde{h}_1(x^0_1)=h_1(0)e^{-{\frac{\alpha_1}{2}}A(0)},\,\alpha_1(x^0_1)=\overline{\alpha}_1$ for $x^0_1=0$ and $\tilde{h}_1(x^0_1):=h_1(x^0_1)e^{-4\pi\alpha_1G_0(x^0_1)},\,\alpha_1(x^0_1)=0$ for $x^0_1\neq0$.

		For the neck part $N_{\delta,n}:=B_{\delta}(x^n_1)\backslash B_{Rr^n_1}(x^n_1)$, following from the idea in \cite{LiZhu}, we can define
		\begin{equation}\label{LB-N-0}
			\theta^n_1:=
			a^n_1-b^n_1+2\big(1+\alpha_1(x^0_1)\big)\log r^n_1+\overline{v}^n_1-w_0(x),
		\end{equation}
		where $a^n_1:=\inf\limits_{\partial B_{Rr^n_1}(x^n_1)}\big(v^n_1+w_0\big),\ b^n_1:=\sup\limits_{\partial B_{\delta}(x^n_1)}\big(v^n_1+w_0\big)$. Recalling back $v^n_1-\overline{v}^n_1+w_0\rightarrow2\overline{\rho}_1(x^0_1)G_{x^0_1}\ \text{in}\ C^0_{\rm{loc}}(\Sigma\backslash\{x^0_1\})$ and $v^n_1(x^n_1+r^n_1x)+2(1+\alpha_1(x^0_1))\log r^n_1\rightarrow\tilde{v}_1$ in $C^0_{\rm{loc}}(\mathbb{R}^2)$, we can deduce that
		\begin{equation}\label{LB-N-1*}
			\begin{aligned} \theta^n_1\rightarrow\inf\limits_{|x|=R}\tilde{v}_1(x)-2\overline{\rho}_1(x^0_1)\sup\limits_{\partial B_{\delta}(x^0_1)}G_{x^0_1}.
			\end{aligned}
		\end{equation}	
		\par

		Let $Z^n_1:=\max\left\{\min\{v^n_1+w_0,a^n_1\},b^n_1\right\}$ and it is apparent that $\ Z^n_1|_{\partial B_{Rr^n_1}(x^n_1)}=a^n_1,\ Z^n_1|_{\partial B_{\delta}(x^n_1)}=b^n_1$. Selecting an isothermal coordinate system centered at $x^n_1$ and employing the Dirichlet principle in\cite{LiZhu}, it indicates
		\begin{equation}\label{LB-N-1}
			\begin{aligned}
				\int_{N_{\delta,n}}|\nabla_g(v^n_1+w_0)|^2dV_g&\geq\int_{N_{\delta,n}}|\nabla_gZ^n_1|^2dV_g\\
				&=\int_{B_{\delta}(0)\backslash B_{Rr^n_1}(0)}|\nabla_{\mathbb{R}^2}Z^n_1|^2dx\\
				&\geq\inf\limits_{\chi|{\partial B_{Rr^n_1}(0)}=a^n_1,\ \chi|{\partial B_{\delta}(0)}=b^n_1} 
				\int_{B_{\delta}(0)\backslash B_{Rr^n_1}(0)}|\nabla_{\mathbb{R}^2}\chi|^2dx,
			\end{aligned}
		\end{equation}
		\par
		
		where the infimum of second inequality in\,\eqref{LB-N-1} is the unique solution of the following equation with boundary condition,
		\begin{equation*}
			\begin{cases}
				\Delta_g\varphi_0(x)=0,\\
				\\
				\varphi_0|{\partial B_{Rr^n_1}(0)}=a^n_1,\ \varphi_0|{\partial B_{\delta}(0)}=b^n_1.
			\end{cases}
		\end{equation*}
		\par
		
		More precisely,
		\begin{equation}\label{LB-N-2*}
			\varphi_0(r)=-{\frac{a^n_1-b^n_1}{\log\delta-\log(Rr^n_1)}}\log r+{\frac{a^n_1\log\delta-b^n_1\log(Rr^n_1)}{\log\delta-\log(Rr^n_1)}}=-{\frac{a^n_1\log{\frac{r}{\delta}}-b^n_1\log{\frac{r}{(Rr^n_1)}}}{\log\delta-\log(Rr^n_1)}}.
		\end{equation}
		\par
		
		In addition, by exploiting \eqref{LB-N-0},\,\eqref{LB-N-1},\,\eqref{LB-N-2*} and applying Taylor expansion, it yields that
		\begin{equation}\label{LB-N-2}
			\begin{aligned}
				&\int_{B_{\delta}(0)\backslash B_{Rr^n_1}(0)}|\nabla_{\mathbb{R}^2}\varphi_0|^2dx\\
				=&{\frac{2\pi(a^n_1-b^n_1)^2}{\log\delta-\log(Rr^n_1)}}\\
				=&{\frac{2\pi\Big(-2(1+\alpha_1(x^0_1))\log r^n_1-\overline{v}^n_1+w_0(x^0_1)+\theta^n_1\Big)^2}{-\log r^n_1}}\cdot\big(1-{\frac{\log R-\log\delta}{-\log r^n_1}}\big)^{-1}\\
				\geq&{\frac{2\pi\Big(-2(1+\alpha_1(x^0_1))\log r^n_1-\overline{v}^n_1+w_0(x^0_1)+\theta^n_1\Big)}{-\log r^n_1}}\cdot\big(1+{\frac{\log R-\log\delta}{-\log r^n_1}}+{\frac{\big(\log{\frac{R}{\delta}}\big)^2}{(\log r^n_1)^2}}\big)\\
				=&-{\frac{(\overline{\rho}_1(x^0_1))^2}{2\pi}}\log r^n_1-2\pi{\frac{(\overline{v}^n_1)^2}{\log r^n_1}}-2\overline{\rho}_1(x^0_1)\overline{v}^n_1+2\overline{\rho}_1(x^0_1)\theta^n_1+2\overline{\rho}_1(x^0_1)w_0(x^0_1)+{\frac{(\overline{\rho}_1(x^0_1))^2}{2\pi}}\log{\frac{R}{\delta}}\\
				&+2\pi{(\frac{\overline{v}^n_1}{\log r^n_1})^2}\log{\frac{R}{\delta}}+{\frac{\overline{v}^n_1}{\log r^n_1}}\big(4\pi\theta^n_1+4\pi w_0(x^0_1)+2\overline{\rho}_1(x^0_1)\log{\frac{R}{\delta}}\big)+o_{\delta}(1)+o_n(1).
			\end{aligned}
		\end{equation}
		\par
		
		In the meanwhile, substituting ${\frac{1}{2}}\big(v^n_1+w_0\big)={\frac{1}{2}}\overline{v}^n_1+\overline{v}^n_2-v^n_2(x)+2w_0(x)$ into $\int_{N_{\delta,n}}Q(v^n_1,v^n_2)dV_g$, we can acquire that
		\begin{equation}\label{LB-N-3}
			\begin{aligned}
				\int_{N_{\delta,n}}Q(v^n_1,v^n_2)dV_g&={\frac{1}{4}}\int_{N_{\delta,n}}|\nabla_g(v^n_1+w_0)|^2dV_g+\int_{N_{\delta,n}}|\nabla_gw_0|^2dV_g-{\frac{1}{2}}\int_{N_{\delta,n}}\nabla_g(v^n_1+w_0)\nabla_gw_0dV_g\\
				&={\frac{1}{4}}\int_{N_{\delta,n}}|\nabla_g(v^n_1+w_0)|^2dV_g+o_n(1).
			\end{aligned}
		\end{equation}
		\par

		Combining \eqref{LB-O},\,\eqref{LB-B},\,\eqref{LB-N-2},\,\eqref{LB-N-3} together and by the upper boundedness of $J_{\rho^n_1}$, it indicates that $\lim\limits_{n\rightarrow+\infty}{\frac{\overline{v}^n_1}{\log r^n_1}}:=2(1+\alpha_1(x^0_1))$. After substituting $\lim\limits_{n\rightarrow+\infty}{\frac{\overline{v}^n_1}{\log r^n_1}}:=2(1+\alpha_1(x^0_1))$ and \eqref{LB-N-1*} into\,\eqref{LB-N-2}, we can obtain that
		\begin{equation}\label{LB-N-4}
			\begin{aligned}
				\int_{N_{\delta,n}}Q(v^n_1,\,v^n_2)dV_g\geq&-{\frac{(\overline{\rho}_1(x^0_1))^2}{2\pi}}\log r^n_1+\overline{\rho}_1(x^0_1)w_0(x^0_1)+\overline{\rho}_1(x^0_1)\theta^n_1+{\frac{(\overline{\rho}_1(x^0_1))^2}{2\pi}}\log{\frac{R}{\delta}}+o_n(1)\\
				=&-{\frac{(\overline{\rho}_1(x^0_1))^2}{2\pi}}\log r^n_1+\overline{\rho}_1(x^0_1)w_0(x^0_1)-2\overline{\rho}_1(x^0_1)\log{\frac{\pi \tilde{h}_1(x^0_1)}{1+\alpha_1(x^0_1)}}-{\frac{(\overline{\rho}_1(x^0_1))^2}{4\pi}}A(x^0_1)\\
				&\quad-{\frac{(\overline{\rho}_1(x^0_1))^2}{2\pi}}\log{\frac{R}{\delta}}+o_n(1).
			\end{aligned}
		\end{equation}
		\par
		
		Based on Remark \ref{AF}, $w_0$ is unique. Then combining \eqref{LB-O},\,\eqref{LB-B},\,\eqref{LB-N-4} and $\lim\limits_{n\rightarrow+\infty}{\frac{\overline{v}^n_1}{\log r^n_1}}:=2(1+\alpha_1(x^0_1))$ together, we can achieve that
		\begin{equation}\label{LB-N-5}
			\begin{aligned}
				&J_{\rho^n_1}(v^n_1,v^n_2)\\
				=&\int_{\Sigma}Q(v^n_1,v^n_2)dV_g+\rho^n_1(x^0_1)\overline{v}^n_1-\rho_2\log\int_{\Sigma}h_2e^{-H_2}e^{v^n_2-\overline{v}^n_2-2w_n}e^{2w_n}dV_g\\
				\geq&\int_{\Sigma}|\nabla_gw_0|^2dV_g-\overline{\rho}_1(x^0_1)\int_{\Sigma}\nabla_gG_{x^0_1}\nabla_gw_0dV_g-\rho_2\log\int_{\Sigma}h_2e^{-4\pi\alpha_2G_0-\overline{\rho}_1(x^0_1)G_{x^0_1}}e^{2w_0}dV_g\\
				&\quad-{\frac{(\overline{\rho}_1(x^0_1))^2}{8\pi}}A(x^0_1)+\overline{\rho}_1(x^0_1)w_0(x^0_1)-\overline{\rho}_1(x^0_1)\log{\frac{{\tilde{h}_1(x^0_1)}}{1+\alpha_1(x^0_1)}}-\overline{\rho}_1(x^0_1)\log\pi-\overline{\rho}_1(x^0_1)\\
				&\quad\quad+o_{\delta}(1)+o_R(1)+o_n(1).
			\end{aligned}
		\end{equation}
		\par

On the other hand, since $(v^n_1,v^n_2)$ is the minimizing sequence of $J_{\rho^n_1}$, then
		\begin{equation}\label{LB-N-6}
		 \inf\limits_{H^1(\Sigma)}J_{\overline{\rho}_1}=\lim\limits_{\delta\rightarrow0}\lim\limits_{R\rightarrow+\infty}\lim\limits_{n\rightarrow+\infty}J_{\rho^n_1}(v^n_1,v^n_2).
		 \end{equation}
		 \par

		By combining Lemma \ref{CPoint},\,\eqref{LB-N-5},\,\eqref{LB-N-6} together and taking limit on the both sides of \eqref{LB-N-5}, we can complete the proof of Lemma \ref{LB}.
	\end{proof}
	\par

	\section{Proof of the main results}
	\subsection{Test function for the case $\alpha_1<0$}
	For the case $\alpha_1<0$, we can choose ${\Phi}^1_{\epsilon}(x)=\Phi_{\epsilon}-w_0$ and ${\Phi}^2_{\epsilon}(x)=-{\frac{1}{2}}\Phi_{\epsilon}+2w_0$ as our testing functions. Similarly to the calculation in \cite{Zhu-a-s}, the function $\Phi_{\epsilon}$ can be defined in the following way:
	\begin{equation*}
		\Phi_{\epsilon}=
		\begin{cases}
			-2\log\big(r^{2(1+\overline{\alpha}_1)}+\epsilon\big)+\log\epsilon,\ ~x\in B_{r_{\epsilon}}(0),\\
			\\
			2\overline{\rho}_1\big(G_{0}(r,\theta)-{\frac{\eta}{8\pi}}\sigma(r,\theta)\big)-2\log\big(\frac{\alpha^{2(1+\overline{\alpha}_1)}_{\epsilon}+1}{\alpha^{2(1+\overline{\alpha}_1)}_{\epsilon}}\big)-{\frac{\overline{\rho}_1}{4\pi}}A(0)+\log\epsilon,\ ~x\in B_{2r_{\epsilon}}(0)\backslash B_{r_{\epsilon}}(0),\\
			\\
			2\overline{\rho}_1G_{0}(r,\theta)-2\log\big(\frac{\alpha^{2(1+\overline{\alpha}_1)}_{\epsilon}+1}{\alpha^{2(1+\overline{\alpha}_1)}_{\epsilon}}\big)-{\frac{\overline{\rho}_1}{4\pi}}A(0)+\log\epsilon,\ \ \ \ \ x\in\Sigma\backslash B_{2r_{\epsilon}}(0),
		\end{cases}
	\end{equation*}
	where $G_{0}(r,\theta)={\frac{1}{8\pi}}\big(-4\log r+A(0)+\sigma(r,\theta)\big),\,r=\rm{dist}_g(x,0)$ and $\eta(x)\in C^{\infty}_0(B_{2r_{\epsilon}}(0))$ is a cut-off function satisfying $\eta\equiv1$ in $B_{r_{\epsilon}}(0),\ \|\nabla\eta\|\leq Cr^{-1}_{\epsilon}$. In addition, $\alpha_{\epsilon}:={\frac{\epsilon^{-\frac{1}{2(1+\overline{\alpha}_1)}}}{-\log\epsilon}}\rightarrow+\infty,\,r_{\epsilon}:=\alpha_{\epsilon}\epsilon^{\frac{1}{2(1+\overline{\alpha}_1)}}\rightarrow0$ as $\epsilon\rightarrow0.$ 
	\par

	By straightforward computation, we immediately attain
	\ $ \int_{\Sigma}Q(\Phi^1_{\epsilon},\Phi^2_{\epsilon})dV_g={\frac{1}{4}}\int_{\Sigma}|\nabla_g\Phi_{\epsilon}|^2dV_g+\int_{\Sigma}|\nabla_gw_0|^2dV_g-{\frac{1}{2}}\int_{\Sigma}\nabla_g\Phi_{\epsilon}\nabla_gw_0dV_g$ and $\int_{\Sigma}\Phi^1_{\epsilon}dV_g=\int_{\Sigma}\Phi_{\epsilon}dV_g,\ \int_{\Sigma}\Phi^2_{\epsilon}dV_g=-{\frac{1}{2}}\int_{\Sigma}\Phi_{\epsilon}dV_g$.
	In the meanwhile, we can also arrive at
	\begin{equation}\label{TFP-1}
		\begin{aligned}
			{\frac{1}{4}}\int_{\Sigma}|\nabla_g\Phi_{\epsilon}|^2dV_g&=-\overline{\rho}_1\log\epsilon-\overline{\rho}_1+{\frac{(\overline{\rho}_1)^2}{8\pi}} A(0)+o_{\epsilon}(1),
		\end{aligned}
	\end{equation}

	\begin{equation}\label{TFP-2}
		\begin{aligned}
			-\overline{\rho}_1\log\int_{\Sigma}h_1e^{-H_1}e^{\Phi_{\epsilon}-w_0}dV_g=&-\overline{\rho}_1\log\big({\frac{\pi h_1(0)e^{-\frac{\overline{\alpha}_1}{2}A(0)}}{1+\overline{\alpha}_1}}\big)+\overline{\rho}_1w_0(0)+o_{\epsilon}(1),
		\end{aligned}
	\end{equation}
	
	\begin{equation}\label{TFP-3}
		\begin{aligned}
			\big(\overline{\rho}_1-{\frac{\rho_2}{2}}\big)\int_{\Sigma}\Phi_{\epsilon}dV_g=\big(\overline{\rho}_1-{\frac{\rho_2}{2}}\big)\log\epsilon-\big(\overline{\rho}_1-{\frac{\rho_2}{2}}\big)\big(1+\overline{\alpha}_1\big)A(0)+o_{\epsilon}(1),
		\end{aligned}
	\end{equation}
	
	\begin{equation}\label{TFP-4}
		-{\frac{1}{2}}\int_{\Sigma}\nabla_g\Phi_{\epsilon}\nabla_gw_0dV_g=-\overline{\rho}_1\int_{\Sigma}\nabla_gG_{0}\nabla_gw_0dV_g+o_{\epsilon}(1),
	\end{equation}
	
	\begin{equation}\label{TFP-5}
		\begin{aligned}
			-\rho_2\log\int_{\Sigma}h_2e^{-H_2}e^{-{\frac{1}{2}}\Phi_{\epsilon}+2w_0}dV_g=&{\frac{\rho_2}{2}}\log\epsilon-\rho_2\log\int_{\Sigma}h_2e^{-4\pi\alpha_2G_0-\overline{\rho}_1G_{0}}e^{2w_0}dV_g\\
			&\quad-{\frac{\rho_2}{2}}(1+\overline{\alpha}_1)A(0)+o_{\epsilon}(1).
		\end{aligned}
	\end{equation}
	\par

	By combining\eqref{TFP-1},\,\eqref{TFP-2},\,\eqref{TFP-3},\,\eqref{TFP-4} and\,\eqref{TFP-5} together, we can conclude that
	\begin{equation}\label{TFP-6}
		\begin{aligned}
			J_{\overline{\rho}_1}(\Phi^1_{\epsilon},\Phi^2_{\epsilon})=&\int_{\Sigma}Q(\Phi^1_{\epsilon},\Phi^2_{\epsilon})dV_g+\overline{\rho}_1\int_{\Sigma}\Phi^1_{\epsilon}dV_g+\rho_2\int_{\Sigma}\Phi^2_{\epsilon}dV_g-\overline{\rho}_1\log\int_{\Sigma}h_1e^{-H_1}e^{\Phi^1_{\epsilon}}dV_g\\
			&\quad-\rho_2\log\int_{\Sigma}h_2e^{-H_2}e^{\Phi^2_{\epsilon}}dV_g+o_{\epsilon}(1)\\
			=&{\frac{1}{4}}\int_{\Sigma}|\nabla_g\Phi_{\epsilon}|^2dV_g+\int_{\Sigma}|\nabla_gw_0|^2dV_g-{\frac{1}{2}}\int_{\Sigma}\nabla_g\Phi_{\epsilon}\nabla_gw_0dV_g+\big(\overline{\rho}_1-{\frac{1}{2}}\rho_2\big)\int_{\Sigma}\Phi_{\epsilon}\,dV_g\\
			&\quad-\overline{\rho}_1\log\int_{\Sigma}h_1e^{-H_1}e^{\Phi_{\epsilon}-w_0}dV_g-\rho_2\log\int_{\Sigma}h_2e^{-H_2}e^{-{\frac{1}{2}}\Phi_{\epsilon}+2w_0}dV_g+o_{\epsilon}(1)\\
			=&\int_{\Sigma}|\nabla_gw_0|^2dV_g-\overline{\rho}_1\int_{\Sigma}\nabla_gG_0\nabla_gw_0dV_g-\rho_2\log\int_{\Sigma}h_2e^{-4\pi\alpha_2G_0-\overline{\rho}_1G_0}e^{2w_0}dV_g\\
			&\quad-{\frac{(\overline{\rho}_1)^2}{8\pi}} A(0)+\overline{\rho}_1w_0(0)-\overline{\rho}_1\log\big({\frac{h_1(0)e^{-\frac{\overline{\alpha}_1}{2}A(0)}}{1+\overline{\alpha}_1}}\big)-\overline{\rho}_1\log\pi-\overline{\rho}_1+o_{\epsilon}(1).
		\end{aligned}
	\end{equation}
	\par

Thus, by taking the limit on both sides of \eqref{TFP-6}, we can conclude that
	\begin{equation}\label{TFP-7}
		\begin{aligned}
			\lim\limits_{\epsilon\rightarrow0}J_{\overline{\rho}_1}(\Phi^1_{\epsilon},\Phi^2_{\epsilon})=&\int_{\Sigma}|\nabla_gw_0|^2dV_g-\overline{\rho}_1\int_{\Sigma}\nabla_gG_0\nabla_gw_0dV_g-\rho_2\log\int_{\Sigma}h_2e^{-4\pi\alpha_2G_0-\overline{\rho}_1G_0}e^{2w_0}dV_g\\
			&\quad-{\frac{(\overline{\rho}_1)^2}{8\pi}}A(0)+\overline{\rho}_1w_0(0)-\overline{\rho}_1\log\big({\frac{{h_1(0)e^{-\frac{\overline{\alpha}_1}{2}A(0)}}}{1+\overline{\alpha}_1}}\big)-\overline{\rho}_1\log\pi-\overline{\rho}_1.
		\end{aligned}
	\end{equation}
	\par

\subsection{Test function for the case $\alpha_1\geq0$}
	In this section, when $\alpha_1\geq0$, with the goal of deducing the similar geometric condition as presented in \cite{JLW}, we can provide a more precise quantification of the error terms, analogous to the formula in \cite{DJLW}. In order to accomplish this goal, it is indispensable to derive a more accurate expression of the Green function and to make slight modifications of the testing function $\Phi_{\epsilon}$. For any $x_0\in\Sigma_+:=\{x\in\Sigma:\ h_1(x)>0\}$ as $\alpha_1=0$ and $x_0\in\Sigma_+\backslash\{0\}:=\{x\in\Sigma:\ h_1(x)>0\}$ as $\alpha_1>0$, define
	\begin{equation*}
		\Phi_{\epsilon}=
		\begin{cases}
			-2\log\big(r^2+\epsilon\big)+b_1r\cos\theta+b_2r\sin\theta+\log\epsilon,\ ~x\in B_{r_{\epsilon}}(x_0),\\
			\\
			8\pi\big(G_{x_0}-{\frac{\eta}{8\pi}}\beta(r,\theta)\big)-2\log\big(\frac{\alpha^{2}_{\epsilon}+1}{\alpha^{2}_{\epsilon}}\big)-A(x_0)+\log\epsilon,\ ~x\in B_{2r_{\epsilon}}(x_0)\backslash B_{r_{\epsilon}}(x_0),\\
			\\
			8\pi G_{x_0}-2\log\big(\frac{\alpha^{2}_{\epsilon}+1}{\alpha^{2}_{\epsilon}}\big)-A(x_0)+\log\epsilon,\ \ \ \ \ x\in\Sigma\backslash B_{2r_{\epsilon}}(x_0),
		\end{cases}
	\end{equation*}
	where $G_{x_0}={\frac{1}{8\pi}}\big(-4\log r+A(x_0)+b_1r\cos\theta+b_2r\sin\theta+\beta(r,\theta)\big),\,r_{\epsilon}:=\alpha_{\epsilon}\sqrt{\epsilon}$\ and\ $\eta(r)\in C^{\infty}_0(B_{2r_{\epsilon}}(x_0))$ is a cut-off function such that $\eta\equiv1$ in $B_{r_{\epsilon}}(x_0),\ |\nabla\eta|\leq Cr^{-1}_{\epsilon}$. In addition, we can take $\alpha^4_{\epsilon}\epsilon:={\frac{1}{\log(-\log\epsilon)}}$ such that $\alpha_{\epsilon}\rightarrow+\infty$ and $r_{\epsilon}\rightarrow0$ as $\epsilon\rightarrow0$.
	
	By straightforward calculation, it yields that
	\begin{equation}\label{AP-1}
		\begin{aligned}
			{\frac{1}{4}}\int_{\Sigma}|\nabla_g\Phi_{\epsilon}|^2dV_g=&-4\pi\log\epsilon+4\pi\log({\frac{\alpha^2_{\epsilon}+1}{\alpha^2_{\epsilon}}})-4\pi{\frac{\alpha^2_{\epsilon}}{\alpha^2_{\epsilon}+1}}+2\pi A(x_0)\\
			&\quad+{\frac{8\pi}{6}}K(x_0)\epsilon\cdot\log(\alpha^2_{\epsilon}+O\big(\alpha^4_{\epsilon}\epsilon^2\log(\alpha^2_{\epsilon}\epsilon)\big),
		\end{aligned}
	\end{equation}
	\par

	\begin{equation}\label{AP-2}
		\begin{aligned}
			(4\pi-{\frac{\rho_2}{2}})\int_{\Sigma}\Phi_{\epsilon}dV_g&=(4\pi-{\frac{\rho_2}{2}})\log\epsilon-(8\pi-\rho_2)\big(1-|B_{\alpha_{\epsilon}\sqrt{\epsilon}}|\big)\log({\frac{\alpha^2_{\epsilon}+1}{\alpha^2_{\epsilon}}})\\
			&\quad-(8\pi-\rho_2)\pi\alpha^2_{\epsilon}\epsilon\log\big({\frac{\alpha^2_{\epsilon}+1}{\alpha^2_{\epsilon}}}\big)-(4\pi-{\frac{\rho_2}{2}})A(x_0)\\
			&\quad-(8\pi-\rho_2)\pi\epsilon\log(\alpha^2_{\epsilon}+1)+O\big(\alpha^4_{\epsilon}\epsilon^2\log(\alpha^2_{\epsilon}\epsilon)\big),
		\end{aligned}
	\end{equation}

	\begin{equation}\label{AP-3}
		-{\frac{1}{2}}\int_{\Sigma}\nabla_g\Phi_{\epsilon}\nabla_gw_0dV_g=-4\pi\int_{\Sigma}\nabla_gG_{x_0}\nabla_gw_0dV_g-\pi\rho_2\epsilon\log(\alpha^2_{\epsilon}+1)+O(\epsilon),
	\end{equation}
	
	\begin{equation}\label{AP-4}
		\begin{aligned}
			-\rho_2\log\int_{\Sigma}h_2e^{-H_2}e^{-{\frac{1}{2}}\Phi_{\epsilon}+2w_0}dV_g=&-\rho_2\log(\frac{\alpha^2_{\epsilon}+1}{\alpha^2_{\epsilon}})-\rho_2\log\int_{\Sigma}h_2e^{-4\pi\alpha_2G_0-4\pi G_{x_0}}e^{2w_0}dV_g\\
			&\quad+{\frac{\rho_2}{2}}\log\epsilon-{\frac{\rho_2}{2}}A(x_0)+O(\epsilon).
		\end{aligned}
	\end{equation}
	\par
	
	Relying on Taylor expansion $	h_1(x)e^{-4\pi\alpha_1G_0(x)}e^{-w_0(x)}-h_1(x_0)e^{-4\pi\alpha_1G_0(x_0)}e^{-w_0(x_0)}=k_1r\cos\theta+k_2r\sin\theta+k_3r^2\cos^2\theta+2k_4r^2\cos\theta\sin\theta+k_5r^2\sin^2\theta+O(r^3)$, we also have that
	\begin{equation}\label{AP-5}
		\begin{aligned}
			&-4\pi\log\int_{\Sigma}h_1e^{-4\pi\alpha_1G_0}e^{\Phi_{\epsilon}-w_0}dV_g\\
			=&-4\pi\log\big(h_1(x_0)e^{-4\pi\alpha_1G_0(x_0)}\big)+4\pi w_0(x_0)-4\pi\log\pi-4\pi\log({\frac{\alpha^2_{\epsilon}}{\alpha^2_{\epsilon}+1}})-{\frac{4\pi}{\alpha^2_{\epsilon}+1}}\\
			&\quad-{\frac{1}{4\big(\alpha^2_{\epsilon}+1\big)^2}}+{\frac{2\pi K(x_0)}{3}}\epsilon\log(\alpha^2_{\epsilon}+1)-\pi\big(b^2_1+b^2_2\big)\epsilon\log(\alpha^2_{\epsilon}+1)\\
			&\quad+\pi\big(b^2_1+b^2_2\big)\epsilon\log(\alpha^2_{\epsilon}\epsilon)-\pi{\frac{\Delta_g(h_1e^{-4\pi\alpha_1G_0}e^{-w_0})|_{x=x_0}}{h_1(x_0)e^{-4\pi\alpha_1G_0(x_0)}e^{-w_0(x_0)}}}\epsilon\log(\alpha^2_{\epsilon}+1)\\
			&\quad+\pi{\frac{\Delta_g(h_1e^{-4\pi\alpha_1G_0}e^{-w_0})|_{x=x_0}}{h_1(x_0)e^{-4\pi\alpha_1G_0(x_0)}e^{-w_0(x_0)}}}\epsilon\log(\alpha^2_{\epsilon}\epsilon)-2\pi{\frac{k_{1}b_1+k_{2}b_2}{h_ 1(x_0)e^{-4\pi\alpha_1G_0(x_0)}e^{-w_0(x_0)}}}\epsilon\log(\alpha^2_{\epsilon}+1)\\
			&\quad+2\pi\big(c_1+c_3-{\frac{K(x_0)}{3}}\big)\epsilon\log(\alpha^2_{\epsilon}\epsilon)+2\pi{\frac{k_{1}b_1+k_{2}b_2}{h_1(x_0)e^{-4\pi\alpha_1G_0(x_0)}e^{-w_0(x_0)}}}\epsilon\log(\alpha^2_{\epsilon}\epsilon)+O(\epsilon),
		\end{aligned}
	\end{equation}
	\par
	
	Combining\eqref{AP-1},\,\eqref{AP-2},\,\eqref{AP-3},\,\eqref{AP-4} and\,\eqref{AP-5} together, we can obtain that
	\begin{equation*}
		\begin{aligned}
			&J_{\overline{\rho}_1}(\Phi^1_{\epsilon},\Phi^2_{\epsilon})\\
			=&\int_{\Sigma}Q(\Phi^1_{\epsilon},\Phi^2_{\epsilon})dV_g+4\pi\int_{\Sigma}\Phi^1_{\epsilon}dV_g+\rho_2\int_{\Sigma}\Phi^2_{\epsilon}dV_g-4\pi\log\int_{\Sigma}h_1e^{-4\pi\alpha_1G_0}e^{\Phi^1_{\epsilon}}dV_g\\
			&\quad-\rho_2\log\int_{\Sigma}h_2e^{-4\pi\alpha_2G_0}e^{\Phi^2_{\epsilon}}dV_g\\
			=&{\frac{1}{4}}\int_{\Sigma}|\nabla_g\Phi_{\epsilon}|^2dV_g+\int_{\Sigma}|\nabla_gw_0|^2dV_g-{\frac{1}{2}}\int_{\Sigma}\nabla_g\Phi_{\epsilon}\nabla_gw_0dV_g+\big(4\pi-{\frac{1}{2}}\rho_2\big)\int_{\Sigma}\Phi_{\epsilon}dV_g\\
			&\quad-4\pi\log\int_{\Sigma}h_1e^{-4\pi\alpha_1G_0}e^{\Phi_{\epsilon}-w_0}dV_g-\rho_2\log\int_{\Sigma}h_2e^{-4\pi\alpha_2G_0}e^{-{\frac{1}{2}}\Phi_{\epsilon}+2w_0}dV_g\\
		\end{aligned}
	\end{equation*}

	\begin{equation}\label{AP-6}
		\begin{aligned}	
			=&\int_{\Sigma}|\nabla_gw_0|^2dV_g-4\pi\int_{\Sigma}\nabla_gG_{x_0}\nabla_gw_0dV_g-\rho_2\log\int_{\Sigma}h_2e^{-4\pi\alpha_2G_0}e^{-{4\pi}G_{x_0}}e^{2w_0}dV_g\\
			&-2\pi A(x_0)+4\pi w_0(x_0)-4\pi\log\big( h_1(x_0)e^{-4\pi\alpha_1G_0(x_0)}\big)-4\pi\log\pi-4\pi\\
			&\quad-8\pi^2\Big(1-{\frac{1}{4\pi}}K(x_0)+{\frac{b^2_1+b^2_2}{8\pi}}+{\frac{\Delta_g(h_1e^{-4\pi\alpha_1G_0}e^{-w_0})|_{x=x_0}}{8\pi h_1(x_0)e^{-4\pi\alpha_1G_0(x_0)}e^{-w_0(x_0)}}}\\
			&\quad\quad+{\frac{k_1b_1+k_2b_2}{4\pi h_1(x_0)e^{-4\pi\alpha_1G_0(x_0)}e^{-w_0(x_0)}}}\Big)\epsilon\log\big(\alpha^2_{\epsilon}+1\big)+O\big(\epsilon\log(\alpha^2_{\epsilon}\epsilon)\big).
		\end{aligned}
	\end{equation}
	\par
	
	In the meanwhile, by straightforward computation, we can have that
	\begin{equation}\label{AP-7}
		\begin{aligned} 
	  \Delta_g(h_1e^{-4\pi\alpha_1G_0}e^{-w_0})\big|_{x=x_0}=&e^{-4\pi\alpha_1G_0(x_0)-w_0(x_0)}\Big(\Delta_gh_1-2(4\pi\alpha_1+1)\nabla_gh_1\nabla_gw_0\\
	  &\quad-4\pi\alpha_1h_1\Delta_gG_0+8\pi\alpha_1h_1\nabla_gG_0\nabla_gw_0+(4\pi\alpha_1)^2h_1|\nabla_gG_0|^2\\
	  &\quad+h_1|\nabla_gw_0|^2-h_1\Delta_gw_0\Big)|_{x=x_0},
	  \end{aligned}
	  \end{equation}
	  \par

	  \begin{equation}\label{AP-8}
	  	\begin{aligned}
	    k^2_1+k^2_2=&e^{-8\pi\alpha_1G_0(x_0)-2w_0(x_0)}\Big(|\nabla_gh_1(x_0)|^2+(4\pi\alpha_1)^2h^2_1|\nabla_gG_0|^2+h^2_1(x_0)|\nabla_gw_0(x_0)|^2\\
	    &\quad-8\pi\alpha_1h_1(x_0)\nabla_gh_1(x_0)\nabla_gG_0(x_0)-2h_1(x_0)\nabla_gh_1(x_0)\nabla_gw_0(x_0)\\
	    &\quad\quad+8\pi\alpha_1h^2_1(x_0)\nabla_gG_0(x_0)\nabla_gw_0(x_0)\Big),
	    \end{aligned}
	    \end{equation}
	    \par

	    \begin{equation}\label{AP-9}
	    	\begin{aligned}
	    \Delta_g\log  h_1(x)|_{x=x_0}=-{\frac{|\nabla_gh_1(x_0)|^2}{h^2_1(x_0)}}+{\frac{\Delta_gh_1(x_0)}{h_1(x_0)}},
	        \end{aligned}
	    \end{equation}
	    \par

	   	\begin{equation}\label{AP-10}
	   	\begin{aligned}
	   		\Delta_gw_0(x_0)=\rho_2.
	   	\end{aligned}
	   \end{equation}
	   \par

Then combining \eqref{AP-6},\eqref{AP-7},\eqref{AP-8},\eqref{AP-9},\eqref{AP-10} together and following from the assumption that $\Delta_g\log h_1(x)+8\pi-4\pi\alpha_1-\rho_2>2K(x)$ on $\Sigma_+$, we can calculate the coefficient of the term $\epsilon\log(\alpha^2_{\epsilon}+1)$,
	\begin{equation*}
		\begin{aligned}
			&1-{\frac{1}{4\pi}}K(x_0)+{\frac{b^2_1+b^2_2}{8\pi}}+{\frac{\Delta_g(h_1e^{-4\pi\alpha_1G_0}e^{-w_0})|_{x=x_0}}{8\pi h_1(x_0)e^{-4\pi\alpha_1G_0(x_0)}e^{-w_0(x_0)}}}+{\frac{k_1b_1+k_2b_2}{4\pi h_1(x_0)e^{-4\pi\alpha_1G_0(x_0)}e^{-w_0(x_0)}}}\\
			=&1-{\frac{1}{4\pi}}K(x_0)-{\frac{\Delta_g w_0(x_0)}{8\pi}}+{\frac{b^2_1+b^2_2}{8\pi}}+{\frac{\Delta_gh_1(x_0)}{8\pi h_1(x_0)}}-{\frac{(4\pi\alpha_1+1)\nabla_gh_1(x_0)\nabla_gw_0(x_0)}{4\pi h_1(x_0)}}\\
			&+{\frac{8\pi\alpha_1\nabla_gG_0(x_0)\nabla_gw_0(x_0)}{8\pi}}+{\frac{|\nabla_g w_0(x_0)|^2}{8\pi}}+{\frac{(4\pi\alpha_1)^2|\nabla_gG_0(x_0)|^2}{8\pi}}\\
			&\quad-{\frac{4\pi\alpha_1}{8\pi}}+{\frac{k_1b_1+k_2b_2}{4\pi h_1(x_0)e^{-4\pi\alpha_1G_0(x_0)}e^{-w_0(x_0)}}}
		\end{aligned}
	\end{equation*}
	\par

	\begin{equation*}
		\begin{aligned}
			=&{\frac{\Delta_g\log h_1(x_0)}{8\pi}}+1-{\frac{1}{4\pi}}K(x_0)-{\frac{\rho_2}{8\pi}}+{\frac{b^2_1+b^2_2}{8\pi}}+{\frac{|\nabla_gh_1(x_0)|^2}{8\pi h^2_1(x_0)}}-{\frac{(4\pi\alpha_1+1)\nabla_gh_1(x_0)\nabla_gw_0(x_0)}{4\pi h_1(x_0)}}\\
			&\quad+{\frac{8\pi\alpha_1\nabla_gG_0(x_0)\nabla_gw_0(x_0)}{8\pi}}+{\frac{|\nabla w_0(x_0)|^2}{8\pi}}+{\frac{(4\pi\alpha_1)^2|\nabla_gG_0(x_0)|^2}{8\pi}}\\
			&\quad\quad-{\frac{4\pi\alpha_1}{8\pi}}+{\frac{k_1b_1+k_2b_2}{4\pi h_1(x_0)e^{-4\pi\alpha_1G_0(x_0)}e^{-w_0(x_0)}}}\\
			=&{\frac{1}{8\pi}}\Big(\Delta_g\log h_1(x_0)+8\pi-2K(x_0)-4\pi\alpha_1-\rho_2+b^2_1+b^2_2+|\nabla_gw_0(x_0)|^2\\
			&\quad-{\frac{(8\pi\alpha_1+2)\nabla_gh_1(x_0)\nabla_gw_0(x_0)}{h_1(x_0)}}+{\frac{2\big(k_1b_1+k_2b_2\big)}{h_1(x_0)e^{-4\pi\alpha_1G_0(x_0)}e^{-w_0(x_0)}}}\\
			&\quad+{\frac{|\nabla_gh_1(x_0)|^2}{h^2_1(x_0)}}+8\pi\alpha_1\nabla_gG_0(x_0)\nabla_gw_0(x_0)+(4\pi\alpha_1)^2|\nabla_gG_0(x_0)|^2
			\Big)\\\
			=&{\frac{1}{8\pi}}\Big(\Delta_g\log h_1(x_0)+8\pi-4\pi\alpha_1-\rho_2-2K(x_0)+\big(b_1+{\frac{k_1}{h_1(x_0)e^{-4\pi\alpha_1G_0(x_0)}e^{-w_0(x_0)}}}\big)^2\\
			&\quad+\big(b_2+{\frac{k_2}{h_1(x_0)e^{-4\pi\alpha_1G_0(x_0)}e^{-w_0(x_0)}}}\big)^2\Big)>0.
		\end{aligned}
	\end{equation*}
	\par

	If $\alpha_1>0$, by choosing special $x_0\in\Sigma_+\backslash\{0\}$, we can derive that
	\begin{equation}\label{AP-11}
		\begin{aligned}
			J_{\overline{\rho}_1}(\Phi^1_{\epsilon},\Phi^2_{\epsilon})<&\int_{\Sigma}|\nabla_gw_0|^2dV_g-4\pi\int_{\Sigma}\nabla_gG_{x_0}\nabla_gw_0dV_g-\rho_2\log\int_{\Sigma}h_2e^{-4\pi\alpha_2G_0}e^{-{4\pi}G_{x_0}}e^{2w_0}dV_g\\
			&\quad-2\pi A(x_0)+4\pi w_0(x_0)-4\pi\log\big(h_1(x_0)e^{-4\pi\alpha_1G_0(x_0)}\big)-4\pi\log\pi-4\pi\\
			=&\min\limits_{x\in\Sigma_+\backslash\{0\}}\Big(\int_{\Sigma}|\nabla_gw_0|^2dV_g-4\pi\int_{\Sigma}\nabla_gG_{x}\nabla_gw_0dV_g-\rho_2\log\int_{\Sigma}h_2e^{-4\pi\alpha_2G_0}e^{-{4\pi}G_{x}}e^{2w_0}dV_g\\
			&\quad-2\pi A(x)+4\pi w_0(x)-4\pi\log\big(h_1(x)e^{-4\pi\alpha_1G_0(x)}\big)\Big)-4\pi\log\pi-4\pi.
		\end{aligned}
	\end{equation}
	\par
	
    If $\alpha_1=0$, similarly by choosing special $x_0\in\Sigma_+$, we can also obtain that
	\begin{equation}\label{AP-12}
		\begin{aligned}
			J_{\overline{\rho}_1}(\Phi^1_{\epsilon},\Phi^2_{\epsilon})<&\min\limits_{x\in\Sigma_+}\Big(\int_{\Sigma}|\nabla_gw_0|^2dV_g-4\pi\int_{\Sigma}\nabla_gG_{x}\nabla_gw_0dV_g-\rho_2\log\int_{\Sigma}h_2e^{-4\pi\alpha_2G_0}e^{-{4\pi}G_{x}}e^{2w_0}dV_g\\
			&\quad-2\pi A(x)+4\pi w_0(x)-4\pi\log{h_1(x)}\Big)-4\pi\log\pi-4\pi.
		\end{aligned}
	\end{equation}
	\par

	\noindent\textbf{Completion of the proof for Theorem \ref{Exist-thm} and Theorem \ref{Non-thm}.} It is evident that the conclusion (i) of Theorem \ref{Exist-thm} holds following from \eqref{AP-11},\ \eqref{AP-12} and Lemma \ref{LB};\ the conclusion (ii) of Theorem \ref{Exist-thm} holds following from Lemma \ref{LB}. In the meanwhile, according to Lemma \ref{LB} and \eqref{TFP-7}, we can also prove Theorem \ref{Non-thm}.
	\par

\begin{remark}
		If $\alpha_1<0$, we can't deduce the similar geometric condition as above. In fact, for the case $\alpha_1\geq0$, we can derive the geometric condition by calculating the biggest error term $\epsilon\log(\alpha^2_{\epsilon}+1)$. However, when we consider $\alpha_1<0$, it is difficult to compare these small error terms.
		\par	
		
		More specifically, we can define a more accurate testing function $\Phi_{\epsilon}$ as follows,
		\begin{equation*}
			\Phi_{\epsilon}=
			\begin{cases}
				-2\log\big(r^{2(1+\overline{\alpha}_1)}+\epsilon\big)+{\frac{\overline{\rho}_1}{4\pi}}b_1r\cos\theta+{\frac{\rho_2}{4\pi}}b_2r\sin\theta+\log\epsilon,\ ~x\in B_{r_{\epsilon}}(0),\\
				\\
				2\overline{\rho}_1\big(G_0-{\frac{\eta}{8\pi}}\beta(r,\theta)\big)-2\log\big(\frac{\alpha^{2(1+\overline{\alpha}_1)}_{\epsilon}+1}{\alpha^{2(1+\overline{\alpha}_1)}_{\epsilon}}\big)-{\frac{\overline{\rho}_1}{4\pi}}A(0)+\log\epsilon,\ ~x\in B_{2r_{\epsilon}}(0)\backslash B_{r_{\epsilon}}(0),\\
				\\
				2\overline{\rho}_1G_{0}-2\log\big(\frac{\alpha^{2(1+\overline{\alpha}_1)}_{\epsilon}+1}{\alpha^{2(1+\overline{\alpha}_1)}_{\epsilon}}\big)-{\frac{\overline{\rho}_1}{4\pi}}A(0)+\log\epsilon,\ \ \ \ \ x\in\Sigma\backslash B_{2r_{\epsilon}}(0),
			\end{cases}
		\end{equation*}
		where $G_{x_0}={\frac{1}{8\pi}}\big(-4\log r+A(0)+b_1r\cos\theta+b_2r\sin\theta+\beta(r,\theta)\big),\ r_{\epsilon}:=\alpha_{\epsilon}\epsilon^{\frac{1}{2(1+\overline{\alpha}_1)}},\,r=\rm{dist}_g(x,0)$\,and\ $\eta(x)\in C^{\infty}_0(B_{2r_{\epsilon}}(0))$ is a cut-off function satisfying $\eta\equiv1$ in $B_{r_{\epsilon}}(0),\,\|\nabla\eta\|\leq Cr^{-1}_{\epsilon}$. In addition, we will choose $\alpha_{\epsilon}$ later such that $\alpha_{\epsilon}\rightarrow+\infty\ \text{and}\,r_{\epsilon}\rightarrow0$ as $\epsilon\rightarrow0.$ 
		\par

		By straightforward calculation, we can acquire that
		\begin{equation*}
			\begin{aligned}	
				{\frac{1}{4}}\int_{\Sigma}|\nabla_g\Phi_{\epsilon}|^2dV_g=&-\overline{\rho}_1\log\epsilon+\overline{\rho}_1\log\big(\frac{1+\alpha^{2(1+\overline{\alpha}_1)}_{\epsilon}}{\alpha^{2(1+\overline{\alpha}_1)}_{\epsilon}}\big)-\overline{\rho}_1{\frac{\alpha^{2(1+\overline{\alpha}_1)}}{1+\alpha^{2(1+\overline{\alpha}_1)}_{\epsilon}}}+{\frac{(\overline{\rho}_1)^2}{8\pi}}A(0)\\
				&\quad+{\frac{(\overline{\rho}_1)^2K(0)}{6\pi}}\int^{r_{\epsilon}}_0{\frac{\epsilon r}{r^{2(1+\overline{\alpha}_1)}+\epsilon}}dr+O\big(r^4_{\epsilon}\log( r^2_{\epsilon})\big)+O(\epsilon).
			\end{aligned}
		\end{equation*}
		\par
		
		Unfortunately, the value of ${\frac{(\overline{\rho}_1)^2K(0)}{6\pi}}\int^{r_{\epsilon}}_0{\frac{\epsilon r}{r^{2(1+\overline{\alpha}_1)}+\epsilon}}dr$ varies with the value of $\overline{\alpha}_1$. For example, if $\alpha_1=\overline{\alpha}_1=-{\frac{3}{4}}$, then
		\begin{equation*}
			{\frac{(\overline{\rho}_1)^2K(0)}{6\pi}}\int^{r_{\epsilon}}_0{\frac{\epsilon r}{r^{2(1+\overline{\alpha}_1)}+\epsilon}}dr={\frac{(\overline{\rho}_1)^2K(0)}{6\pi}}\epsilon\int^{r_{\epsilon}}_0{\frac{ r}{r^{\frac{1}{2}}+\epsilon}}dr<{\frac{(\overline{\rho}_1)^2K(0)}{6\pi}}\epsilon\int^{r_{\epsilon}}_0r^{\frac{1}{2}}dr=O(\epsilon).
		\end{equation*}
		\par
		
		This fact implies that $\int_{\Sigma}|\nabla_g\Phi_{\epsilon}|^2dV_g$ can not generate  the bigger error term which contributes to the Gaussian curvature $K(x)$.		
\end{remark}

\end{document}